\theoremstyle{plain}                    
\newtheorem{theorem}{Theorem}[section]
\newtheorem{lemma}[theorem]{Lemma}
\newtheorem{corollary}[theorem]{Corollary}
\theoremstyle{definition}
\newtheorem{definition}[theorem]{Definition}
\theoremstyle{remark}
\newtheorem{remark}[theorem]{Remark}
\newcommand{\pr}{\ensuremath{\mathbb{P}}}
\newcommand{\R}{\ensuremath{\mathcal{R}}}
\newcommand{\B}{\ensuremath{\mathcal{B}}}
\newcommand{\torus}[2]{\ensuremath{\mathbb{T} (#1,#2)}}
\newcommand{\red}{\textit{red}}
\newcommand{\blue}{\textit{blue}}
\numberwithin{equation}{section}
\title{Competing first passage percolation on random regular graphs
\thanks{Keywords: growth process, coupling method, submodularity, social networks, viral marketing.
MSC classification: 60K35, 91D30, 68Q25}}
\author{Ton\'{c}i Antunovi\'{c} \\ U.C. Berkeley and UCLA \\ {\tt tantunovic@math.ucla.edu}
\and Yael Dekel \\ The Hebrew University \\ {\tt yaelvin@cs.huji.ac.il}
\and Elchanan Mossel \\ U.C. Berkeley and Weizmann institute \\ {\tt mossel@stat.berkeley.edu}
\and Yuval Peres \\ Microsoft Research \\ {\tt peres@microsoft.com}}
\date{}
\begin{document}
\maketitle

\begin{abstract}
We consider two competing first passage percolation processes started
from uniformly chosen subsets of a random regular graph on $N$
vertices. The processes are allowed to spread with different rates,
start from vertex subsets of different sizes or at different times. We
obtain tight results regarding the sizes of the vertex sets occupied
by each process, showing that in the generic situation one process
will occupy $\Theta(1)N^\alpha$ vertices, for some $0 < \alpha <
1$. The value of $\alpha$ is calculated in terms of the relative rates
of the processes, as well as the sizes of the initial vertex sets and
the possible time advantage of one process.

The motivation for this work comes from the study of viral marketing
on social networks. The described processes can be viewed as two
competing products spreading through a social network (random regular
graph). Considering the processes which grow at different rates
(corresponding to different attraction levels of the two products) or
starting at different times (the first to market advantage) allows to
model aspects of real competition. The results obtained can be
interpreted as one of the two products taking the lion share of the
market. We compare these results to the same process run on $d$
dimensional grids where we show that in the generic situation the two
products will have a linear fraction of the market each.
\end{abstract}

\section{Introduction}

First passage percolation is one of the most studied discrete models in probability theory. 
It can be realized as a random graph metric when edges have independent identically distributed weights. 
Often the distribution is assumed to be exponential and then the ball of a radius $t$ (from a fixed vertex) is a Markov set process $\mathcal{R}$, in which new vertices are occupied at a rate proportional to the number of their neighbors already in $\mathcal{R}(t)$.
Apart from the classical shape problem on infinite transitive graphs (see \cite{CoxDurrett81}), recently there was substantial interest in estimating diameter, typical distance, flooding times and related quantities for the 
  process on large finite (and possibly random) graphs \cite{Janson97, vdHHvM02, Bhamidi08, BvdHH10a, BvdHH10b, BvdHH}.

In a related two type Richardson model, introduced in \cite{HaggstromPemantle98}, one considers two first passage percolation processes, a blue and a red one, with possibly different rates, spreading through the graph and capturing non-colored vertices. Each non-colored vertex becomes colored with color $c$ at the rate proportional to the number of $c$ colored neighbors (this can also be viewed as the Voronoi tessellation with respect to two independent first passage percolation metric). A significant amount of work on this model has been devoted to identifying the cases in which both colors grow indefinitely \cite{HaggstromPemantle00, DeijfenHaggstrom06A, DeijfenHaggstrom06B, DeijfenHaggstrom07, GaretMarchand08, Hoffman05, Hoffman08}. 
%
In the current paper we are studying a version of this model on large random regular graphs (which are objects of independent interest \cite{Bollobas01, JLR00}). We are interested in the sizes of each colored component, while allowing the processes to start at different times, from sets of different sizes and spread with different rates.

From an applied point of view, this model can be viewed to simulate spreading of two products (or viruses) through a social network. In recent years, diffusion processes on social networks have been the
focus of intense study in a variety of areas. Traditionally these
processes have been of major interest in epidemiology where they model
the spread of diseases and
immunization~\cite{Morris:04,Liggett:85,Liggett:01,Durrett88,BeBoChSa:05,DuJu:u}.
Much of the recent interest has resulted from applications in
sociology, economics, and engineering~\cite{BrownReinegen:87,
  AsRoLeVe:01, GoLiMu:01a,GoLiMu:01b,
  DomingosRichardson:01,DomingosRichardson:02, KeKlTa:03,KeKlTa:05}.

The interpretations of the diffusion process in terms of product
marketing and in terms of virus spread lead to some natural questions
we address in this paper.
What is the advantage that the first product (the first virus) has in terms of the initial time it can spread with no competition? What is the effect of one of them starting with larger initial size (initial seed sets) than the other one or having a larger rate (higher quality of a product)? What is the effect of the structure of the social network on the outcome of the competition between the two products?
%
%
%
%
%
%
 To answer the last question we compare the results for the model on large random regular graphs to the same model on
 large $d$ dimensional tori.
 The first family of graphs model some (but
 not all) features of current social networks (small diameter,
 expansion etc.) while the second family models traditional spatial
 graph processes that are traditionally studies in epidemiology,
 ecology and statistical physics. We proceed with a formal definition
 of the process and a statement of our main results.

\subsection{Description of the process and the results}
Let $G=(V,E)$ be a graph with $|V|$ vertices and $|E|$ edges,
and let $\mathcal{B}_0$ and $\mathcal{R}_0$ be  disjoint sets of vertices (we think of $\B_0$ as a set of \blue\ vertices and of $\R_0$ as a set of \red\ vertices). Denote by $N(v)$ the set of neighbors of $v$.
Competing first passage percolation (CFPP) considered in this paper is a Markov process, whose state space is the family of subsets of $V$, which evolves by coloring an uncolored vertex blue (red) at the rate equal to $\beta$ ($\rho$) times the number of neighbors of $v$ which are already blue (red).
That is, at any time $t$, each vertex $v \notin \mathcal{B}_t \cup \mathcal{R}_t$ becomes an element of $\mathcal{B}_t$ at the rate equal to $\beta |N(v) \cap \mathcal{B}_t|$, and an element of $\mathcal{R}_t$ at the rate equal to $\rho |N(v) \cap \mathcal{B}_t|$. Here $\beta$ and $\rho$ are parameters fixed throughout, called \emph{rates} of $\mathcal{B}$ and $\mathcal{R}$ respectively. Sets $\mathcal{B}_t$ and $\mathcal{R}_t$ are increasing in $t$, that is once a vertex gets colored with a certain color it does not change its state again.


The discretized version  $(\tilde{\mathcal{B}}_n , \tilde{\mathcal{R}}_n)$ of this process, which records its state only at times when a change happens, has very simple jump rules. 
At each integer $n$, choose an edge connecting a vertex $u$ in $\tilde{\mathcal{B}}_n \cup \tilde{\mathcal{R}}_n$ to a vertex $v$  in the complement $\big(\tilde{\mathcal{B}}_n \cup \tilde{\mathcal{R}}_n\big)^c$. The edges incident to a vertex in $\tilde{\mathcal{B}}_n$ are chosen with probability proportional to $\beta$ and those incident to a vertex in $\tilde{\mathcal{R}}_n$ with probability proportional to $\rho$. If $u \in \tilde{\mathcal{B}}_n$ then set $\tilde{\mathcal{B}}_{n+1} = \tilde{\mathcal{B}}_{n} \cup \{v\}$ and $\tilde{\mathcal{R}}_{n+1} = \tilde{\mathcal{R}}_n$. If $u \in \tilde{\mathcal{R}}_n$ then set $\tilde{\mathcal{R}}_{n+1} = \tilde{\mathcal{R}}_{n} \cup \{v\}$ and $\tilde{\mathcal{B}}_{n+1} = \tilde{\mathcal{B}}_n$. 

By $\mathcal{B}_{\text{fin}}$ and $\mathcal{R}_{\text{fin}}$ denote the final set of blue and red vertices when the whole graph is exhausted, and their sizes by $B_{\text{fin}}$ and $R_{\text{fin}}$ respectively. We are interested in the asymptotic behavior of $B_{\text{fin}}$ and $R_{\text{fin}}$ as the size of the graph tends to infinity, and how it depends on the choice of initial sets $\mathcal{B}_0$ and $\mathcal{R}_0$ and rates $\beta$ and $\rho$. Observe that time parametrization is irrelevant for the sets $\mathcal{B}_{\text{fin}}$ and $\mathcal{R}_{\text{fin}}$. In particular, we will be mainly studying the process through its discretized version $(\tilde{\mathcal{B}}_n, \tilde{\mathcal{R}}_n)$, which will be denoted by $(\mathcal{B}_n, \mathcal{R}_n)$ (as opposed to $(\mathcal{B}_t, \mathcal{R}_t)$ for the continuous process).

Consider the (finite) set of all simple $d$-regular vertex-labeled graphs with the vertex set $\{1,\dots,N\}$. The random $d$-regular graph on $N$ vertices is a random graph chosen uniformly from this set (here we assume that $dN$ is even, as otherwise such graphs do not exist). We will study the above process on the random $d$-regular graphs. 
Sets $\mathcal{B}_0$ and $\mathcal{R}_0$ will be chosen random as well.
This all means that we will first choose a $d$-regular graph graph on $N$ vertices from the uniform distribution,  conditioned on its realization we will sample sets $\mathcal{B}_0$ and $\mathcal{R}_0$ using a certain rule, and conditioned on the realization of this coupling we will run the competing first passage percolation process (CFPP) described above. Note that we will always assume that $d \geq 3$. The reason for this assumption is that $2$-regular graphs are just disjoint unions of cycles. As these graphs (except in the case of one cycle) are not connected, the process can not spread throughout the whole graph.

Here we state one of our results which is just a special case of Theorem \ref{thm:main_both_large}, but which nicely describes the type of results we obtain. It refers to the  case when the sets $\mathcal{B}_0$ and $\mathcal{R}_0$ are chosen uniformly of large prescribed size (in Section \ref{sec: fpp statements} we will allow more general rules for choosing the initial sets to model certain aspects of competitive behavior in real-world networks).

As our theorems give the asymptotics of $B_{\text{fin}}$ and $R_{\text{fin}}$ as the graph sizes  $N \to \infty$, values such as $B_0$, $R_0$, $B_{\text{fin}}$ and $R_{\text{fin}}$ will in general depend on $N$. However, to keep the formulas more readable, we will not always emphasize this dependence explicitly.

\begin{theorem}\label{thm:fpp_main_intro}
For $d \geq 3$ and a random $d$-regular graph on $N$ vertices,
assume that the sets $\mathcal{B}_0=\mathcal{B}_0(N)$ and $\mathcal{R}_0=\mathcal{R}_0(N)$ are chosen uniformly at random among all disjoint vertex subsets of sizes $B_0=B_0(N)$ and $R_0=R_0(N)$ respectively. Assume that there are constants $c_1, C_1$ such that $c_1 N^{\alpha_b}\leq B_0 \leq C_1 N^{\alpha_b}$ and $c_1 N^{\alpha_r}\leq R_0 \leq C_1 N^{\alpha_r}$. Then there exist constants $c_2, C_2$ such that with probability converging to $1$ as $N\to \infty$
\begin{itemize}
\item[i)]  $c_2 N^{\alpha_b + (1-\alpha_r)\beta/\rho} \leq B_{\text{fin}} \leq C_2 N^{\alpha_b + (1-\alpha_r)\beta/\rho}$, in the case $(1-\alpha_r)/\rho \leq  (1-\alpha_b)/\beta$,
\item[i)]  $c_2 N^{\alpha_r + (1-\alpha_b)\rho/\beta} \leq R_{\text{fin}} \leq C_2 N^{\alpha_r + (1-\alpha_b)\rho/\beta}$, in the case $ (1-\alpha_b)/\beta \leq  (1-\alpha_r)/\rho$.
\end{itemize}
\end{theorem}

This result shows that typically one process occupies only $o(N)$ vertices, and the other one everything else. From the applied perspective, the results of this type can be interpreted as one of the two products taking the lion share of the market. This result stands in striking contrast with the ones that we obtained in the case when the underlying graph is a large torus. Our results (see Theorem \ref{TorusThm}) show that even if we start one of the processes earlier than the other and we give it a much higher rate, the other process will still occupy a linear fraction of vertices with high probability.

\subsection{Acknowledgments}
E.M. is supported by MSF DMS
1106999 award, by DOD ONR grant N000141110140, by ISF
grant 1300/08 and by a Minerva Grant.

\section{Statements of results}\label{sec: fpp statements}


Recall that we want from our model to handle situations which arise in cases when one of the processes starts earlier than the other. Therefore we start by discussing how we choose the initial random sets $\mathcal{B}_0$ and $\mathcal{R}_0$. 

\begin{definition}\label{def: uniform initial sets}
For a graph $G=(V,E)$ we say that the pair $(\mathcal{B}_0, \mathcal{R}_0)$ of subsets of $V$, is  \emph {uniform of size}  $(B_0,R_0)$ if it is chosen uniformly at random among all pairs of disjoint subsets of $V$ of the sizes $B_0$ and $R_0$ respectively.
\end{definition}

For the case when one process (say $\mathcal{B}$) starts earlier than the other, the idea is to let $\mathcal{B}$ evolve from a uniformly chosen subset of some size, until it reaches a certain prescribed size. Then we define $\mathcal{B}_0$ to be the occupied set and take $\mathcal{R}_0$ to be a uniform subset of $\mathcal{B}_0^c$. The first phase in which only $\mathcal{B}$ grows is simply the CFPP process in which $\mathcal{R}$ starts from the empty set of vertices.
This leads to the following definition.

\begin{definition}\label{def: early initial sets}
For a graph $G=(V,E)$ we say that the random pair $(\mathcal{B}_0, \mathcal{R}_0)$ of disjoint subsets of $V$, is  \emph {uniform of size} $(B_0,R_0)$ \emph {with} $\mathcal{B}_0$ \emph {center of size} $k_0$ if
\begin{itemize}
\item[i)] $\mathcal{B}_0^0$ is a uniformly chosen subset of $V$ of the size $k_0$,
\item[ii)] $\mathcal{B}_0 = \mathcal{B}_T^0$, where $(\mathcal{B}^0,\mathcal{R}^0)$ is the CFPP process ran from $(\mathcal{B}_0^0, \emptyset)$, and $T$ is the first time $k$ that $|\mathcal{B}_k^0| = B_0$,
\item[iii)] $\mathcal{R}_0$ is the uniformly chosen subset of $\mathcal{B}_0^c$ of size $R_0$.
\end{itemize}
For a sequence of graphs $G_N=(V_N,E_N)$ we say for a sequence $(\mathcal{B}_{0}(N), \mathcal{R}_{0}(N))$ of disjoint pairs of random  subsets of $V_N$, that $\mathcal{B}_{0}$ \emph{has a small center} if 
\begin{itemize}
\item[i)] for every $N$, $(\mathcal{B}_{0}(N), \mathcal{R}_{0}(N))$ is  uniform of some size $(B_0(N),R_0(N))$ with $\mathcal{B}_0(N)$ center of size $k_0(N)$,
\item[ii)]  $\lim_{N \to \infty}k_0(N)/B_0(N) = 0$.
\end{itemize}
\end{definition}

For our results we need to either choose the pair of initial sets uniformly of prescribed size, or always allow one of the processes to have a significant advantage. This is captured by the following definition.

\begin{definition}\label{def: admissible sets}
For a sequence of graphs $G_N=(V_N,E_N)$ we say  that a sequence $(\mathcal{B}_{0}(N), \mathcal{R}_{0}(N))$ of disjoint pairs of random subsets of $V_N$ is \emph{admissible} of size $(B_0(N),R_0(N))$ if
\begin{itemize}
\item[i)] $|\mathcal{B}_0(N)|=B_0(N)$ and $|\mathcal{R}_0(N)|=R_0(N)$,
\item[ii)] either $\mathcal{B}_{0}$ or $\mathcal{R}_0$ has a small center, or for every $N$ the pair $(\mathcal{B}_{0}(N), \mathcal{R}_{0}(N))$ is uniform of size $(B_0(N),R_0(N))$.
\end{itemize}
\end{definition}

We now state our main results. Recall that Beta distribution  $Beta(\rho_1,\rho_2)$ is a distribution supported on $[0,1]$ with the density proportional to $t^{\rho_1-1}(1-t)^{\rho_2-1}$, and that Gamma distribution $\Gamma(\alpha,\beta)$ is supported on $\mathbb{R}^+$ with the density proportional to $t^{\alpha-1}e^{-\beta t}$. An elementary property of Gamma distribution that we will use to simplify the formulas in the statements is that if $Q$ has  $\Gamma(\alpha,1)$ distribution, then $\beta Q$ has $\Gamma(\alpha,1/\beta)$ distribution.

It is useful at this point to introduce some notation that will repeatedly appear in the statements of the main theorem. First define
\begin{equation}\label{eq: main thm starting type}
(\delta_B,\delta_R) = 
\left\{
\begin{array}{ll}
(1,1),& \text{ if }(\mathcal{B}_0,\mathcal{R}_0) \text{ are chosen uniformly,}\\
(1-2/d,1),& \text{ if }\mathcal{B}_0 \text{ has a small center,}\\
(1,1-2/d),& \text{ if } \mathcal{R}_0 \text{ has a small center.}
\end{array}
\right.
\end{equation}
Next define 
\begin{align}\label{eq: main thm multiplicative constant}
\sigma_B & = \frac{\delta_B\beta}{\delta_R^{\beta/\rho}\rho}\int_0^1 (t^{1/d}-t^{1-1/d})^{\beta/\rho -1}~dt, \\
\sigma_R & = \frac{\delta_R\rho}{\delta_B^{\rho/\beta}\beta}\int_0^1 (t^{1/d}-t^{1-1/d})^{\rho/\beta -1}~dt \nonumber,
\end{align}
which will appear as multiplicative constants in the limiting distribution for the component sizes, and
\begin{align}\label{eq: main thm multiplicative constant boundary}
\sigma'_B & = d\frac{\delta_B(\beta+\rho)}{\delta_R^{\beta/\rho}\rho}\int_0^1 (t-t^{d-1})^{\beta/\rho}~dt, \\
\sigma'_R & = d\frac{\delta_R(\beta+\rho)}{\delta_B^{\rho/\beta}\beta}\int_0^1 (t-t^{d-1})^{\rho/\beta}~dt \nonumber,
\end{align}
which will appear as multiplicative constants in the limiting distribution for the boundary.

In the statements of Theorems \ref{thm:main_both_large} to \ref{thm:boundary} we assume that $G$ is a random $d$-regular graph on $N$ vertices, and $(\mathcal{B},\mathcal{R})$ a competing first passage percolation process on $G$ with fixed parameters $(\beta,\rho)$. 

The first theorem covers the case when both processes start from a large size, and shows that the properly rescaled final sizes of colored components converge in probability.

\begin{theorem}\label{thm:main_both_large}
Assume that sequences $B_0=B_0(N)$ and $R_0=R_0(N)$ satisfy $\lim_{N}B_0 = \lim_{N}R_0 = \infty$ and $\lim_N(B_0+R_0)/N =0$.
If $(\mathcal{B},\mathcal{R})$ are started from admissible pairs of sizes $(B_0,R_0)$ then
 there exists deterministic sequences $\overline{B}= \overline{B}(N)$ and $\overline{R}= \overline{R}(N)$, such that  for every $\varepsilon > 0$ the final sizes $B_{\text{fin}}= B_{\text{fin}}(N)$ and $R_{\text{fin}}=R_{\text{fin}}(N)$ satisfy
\[
\lim_{N \to \infty}\mathbb{P}(|B_{\text{fin}} - \overline{B}| > \varepsilon \overline{B}) = \lim_{N \to \infty}\mathbb{P}(|R_{\text{fin}} - \overline{R}| > \varepsilon \overline{R}) = 0.
\]
Moreover, there are positive constants $c$ and $C$ depending only on $\rho/\beta$ and $d$ such that
\begin{align*}
c \min\left(R_0(N/B_0)^{\rho/\beta}, N\right)\leq \overline{R} \leq \min\left(CR_0(N/B_0)^{\rho/\beta},N\right),
\end{align*}
and
\begin{align*}
c \min\left(B_0(N/R_0)^{\beta/\rho}, N\right)\leq \overline{B} \leq \min\left(CB_0(N/R_0)^{\beta/\rho},N\right).
\end{align*}
For $\beta = \rho$ we have 
\[
\overline{B} = \frac{\delta_BB_0}{\delta_BB_0+\delta_RR_0}N \ \text{ and } \ \overline{R} = \frac{\delta_RR_0}{\delta_BB_0+\delta_RR_0}N, \ \text{ if } \ \beta = \rho,
\]
and for $\beta \neq \rho$
\begin{align*}
\overline{R}& = \sigma_R R_0(N/B_0)^{\rho/\beta}, \ \text{if }\ \lim_{N \to \infty} \frac{R_0(N/B_0)^{\rho/\beta}}{N} = 0,\\
\overline{B}& = \sigma_B B_0(N/R_0)^{\beta/\rho}, \ \text{if } \  \lim_{N \to \infty} \frac{B_0(N/R_0)^{\beta/\rho}}{N} = 0 \ \Big(\Leftrightarrow \lim_{N \to \infty} \frac{R_0(N/B_0)^{\rho/\beta}}{N} = \infty\Big).
\end{align*}
\end{theorem}

The following theorem covers the case when both processes start from fixed sizes, that is both sequences $B_0=B_0(N)$ and $R_0=R_0(N)$ are constant. Here of course, we don't need to worry about the possibility of one process starting earlier - such a version wouldn't be admissible.

\begin{theorem}\label{thm:small_sizes}
Assume that $B_0$, $R_0$ and $d \geq 3$ are fixed positive integers, and $\beta > 0$, $\rho > 0$ fixed rates. If $\beta = \rho$ then $R_{\text{fin}}/N$ and $B_{\text{fin}}/N$ converge in distribution, as $N \to \infty$ to 
\[
\frac{R_{\text{fin}}}{N}\to W 
, \ \ \frac{B_{\text{fin}}}{N}\to1-W, 
\]
where $W$ is distributed as $Beta(\frac{dR_0}{d-2}, \frac{dB_0}{d-2})$.
If $\beta \neq \rho$ then we have the following convergence in distribution
\begin{align*}
\frac{R_{\text{fin}}}{N^{\rho/\beta}} & \rightarrow  \sigma_R\frac{U}{V^{\rho/\beta}}\ \text{ if } \ \rho < \beta,\\ 
\frac{B_{\text{fin}}}{N^{\beta/\rho}} & \rightarrow  \sigma_B\frac{V}{U^{\beta/\rho}} \ \text{ if } \ \beta < \rho,
\end{align*}
where
$U$ and $V$ are independent random variables with Gamma distributions $\Gamma(\frac{d}{d-2}B_0,\frac{d}{d-2})$ and $\Gamma(\frac{d}{d-2}R_0,\frac{d}{d-2})$, respectively.
\end{theorem}


The next theorem covers the last case, when the initial set of one process is small and the initial set for the other one is large.

\begin{theorem}\label{thm:mixed_sizes}
Let $B_0$ be fixed, $\lim_{N \to \infty}R_0=\infty$ and $\lim_{N \to \infty}R_0/N=0$. If either $\beta \leq \rho$ or both $\beta > \rho$ and $\lim_N \frac{(N/R_0)^{\beta/\rho}}{N} = 0$ hold then we have the  convergence in distribution
\[
\frac{B_{\text{fin}}}{(N/R_0)^{\beta/\rho}} \rightarrow \sigma_B \Gamma\Big(\frac{dB_0}{d-2},\frac{d}{d-2}\Big).
\]
If both $\beta > \rho$ and $\lim_N \frac{R_0N^{\rho/\beta}}{N}=0$ $\big(\Leftrightarrow \lim_N \frac{(N/R_0)^{\beta/\rho}}{N} = \infty\big)$ hold then  we have the  convergence in distribution
\[
\frac{R_{\text{fin}}}{R_0N^{\rho/\beta}}   \rightarrow \sigma_R \Gamma\Big(\frac{dB_0}{d-2},\frac{d}{d-2}\Big)^{-\rho/\beta}.
\]
\end{theorem}

Note that, for $\beta \neq \rho$ in Theorems \ref{thm:main_both_large} to \ref{thm:mixed_sizes} the precise asymptotics is always given for the process which occupies $o(N)$ vertices. In other words, if one process ``looses the game'' (that is occupies only $o(N)$ vertices) we can give a very precise estimate on the distribution of the number of vertices it has occupied.

Finally, we describe the size of the boundary between the colored components. Since random regular graphs are expanders with high probability the above results on component sizes give us information on the size of the boundary up to the multiplicative factor. The following theorem however, provides information about the multiplicative constant as well.

\begin{theorem}\label{thm:boundary}
Let $D_{\text{fin}}$ denote the number of edges connecting the sets $\mathcal{B}_{\text{fin}}$ and $\mathcal{R}_{\text{fin}}$. Then
\begin{itemize}
\item[i)] 
If $B_0=B_0(N)$ and $R_0=R_0(N)$ satisfy $\lim_N B_0 = \lim_N R_0 = \infty$ and $\lim_N(B_0+R_0)/N = 0$ then
then there is a sequence $\overline{D}$ such that for any $\varepsilon > 0$ we have $\lim_{N \to \infty}\mathbb{P}(|D_{\text{fin}} - \overline{D}| > \varepsilon \overline{D}) = 0$. Moreover
\begin{align*}
\overline{D}& = \frac{\delta_B \delta_R B_0 R_0}{(\delta_B B_0+ \delta_R R_0)^2}(d-2)N, \ \text{ if } \ \beta = \rho,\\
\overline{D}& = \sigma'_R R_0(N/B_0)^{\rho/\beta}, \text{ if } \beta \neq \rho, \text{ and } \lim_{N \to \infty} \frac{R_0(N/B_0)^{\rho/\beta}}{N} = 0,\\
\overline{D}& = \sigma'_B B_0(N/R_0)^{\beta/\rho}, \text{ if } \beta \neq \rho, \text{ and } \lim_{N \to \infty} \frac{B_0(N/R_0)^{\beta/\rho}}{N} = 0 \ \Big(\Leftrightarrow \lim_{N \to \infty} \frac{R_0(N/B_0)^{\rho/\beta}}{N} = \infty\Big).
\end{align*}

\item[ii)] Assume that $B_0=B_0(N)$ and $R_0=R_0(N)$ are constant.  If $\beta = \rho$ then $D_{\text{fin}}/N$ converges in distribution as $N \to \infty$ 
\[
\frac{D_{\text{fin}}}{N} \to (d-2)W(1-W)
\]
where $W$ is distributed as $Beta(\frac{dR_0}{ d-2}, \frac{dB_0}{ d-2})$.
 If $\beta \neq \rho$ then we have the following convergence in distribution
\begin{align*}
\frac{D_{\text{fin}}}{N^{\rho/\beta}}& \to \sigma'_R \frac{U}{V^{\rho / \beta}},\ \text{ if }  \rho < \beta \\
\frac{D_{\text{fin}}}{N^{\beta/\rho}}& \to \sigma'_B \frac{V}{U^{\beta / \rho}} ,\ \text{ if }  \beta < \rho,
\end{align*}
where $U$ and $V$ are independent and distributed as $\Gamma\Big(\frac{ dR_0}{ d-2},\frac{d}{d-2}\Big)$ and $\Gamma\Big(\frac{dB_0}{d-2},\frac{d}{d-2}\Big)$ respectively.

\item[iii)] 
Let $B_0$ be fixed and $\lim_{N \to \infty}R_0=\infty$ and $\lim_{N \to \infty}R_0/N=0$. If either $\beta \leq \rho$ or both $\beta > \rho$ and $\lim_N \frac{(N/R_0)^{\beta/\rho}}{N} = 0$ hold then we have the  convergence in distribution
\[
\frac{D_{\text{fin}}}{(N/R_0)^{\beta/\rho}} \rightarrow \sigma'_B \Gamma\Big(\frac{dB_0}{d-2},\frac{d}{d-2}\Big).
\]
If both $\beta > \rho$ and $\lim_N \frac{R_0N^{\rho/\beta}}{N}=0$ $\big(\Leftrightarrow \lim_N \frac{(N/R_0)^{\beta/\rho}}{N} = \infty\big)$ hold then  we have the  convergence in distribution
\[
\frac{D_{\text{fin}}}{R_0N^{\rho/\beta}}   \rightarrow \sigma'_R \Gamma\Big(\frac{dB_0}{d-2},\frac{d}{d-2}\Big)^{-\rho/\beta }.
\]


\end{itemize}
\end{theorem}

The following theorem about the behavior of the processes on the torus stands in contrast with the results on the component sizes in the case of random regular graph.
Here we assume that $G= \torus{N}d$ is a $d$-dimensional torus with $N$ vertices, and $(\mathcal{B},\mathcal{R})$ a CFPP process on $\torus{N}d$ with parameters $(\beta,\rho)$. 

\begin{theorem}\label{TorusThm}
Let $\torus{N}d = \big( \mathbb Z / n \mathbb Z \big)^d$ for $n$ such that $N = n^d$, be the $d$-dimensional torus with $N$ vertices, and fix the rates $\beta > 0$, $\rho > 0$.  
Then for any $\varepsilon > 0$ there exists $\delta>0$ such that 
\[
\liminf_{n \to \infty} \mathbb{P}(B_{\text{fin}} > \delta N, R_{\text{fin}} > \delta N) > 1-\varepsilon,
\]
if either of the following two conditions hold
\begin{itemize}
\item[i)] $B_0$ and $R_0$ are fixed positive integers, and $(\mathcal{B}_0,\mathcal{R}_0)$ are chosen uniformly of size $(B_0,R_0)$,
\item[ii)] $R_0$ and $k_0$ are fixed positive integers, sequence $B_0$ converges to $\infty$ and it satisfies $\lim_N B_0/N = 0$ and $(\mathcal{B}_0,\mathcal{R}_0)$ are chosen uniformly of size $(B_0,R_0)$, with $\mathcal{B}_0$ center of size $k_0$.
\end{itemize}
\end{theorem}



\subsection{Remarks and follow up work}
We note that the results of all the theorems above cannot hold if the
sets $\B_0$ and $\R_0$ are arbitrary. Consider for example the case
where $\B_0$ is the ball of radius $r$ in the graph around a vertex
$v$ and $\R_0$ consists of all vertices at distance exactly $r+1$ from
$v$. While the set $\R_0$ is not much bigger than $\B_0$ - clearly the
remaining vertices will all become red.

The fact that the results do not hold for arbitrary sets raise various
game theoretic questions.  For example, consider a game where player
$B$ has to choose the set $\B_0$ and player $R$ has to choose the set
$\R_0$.  Suppose player $B$ can choose up to $N^{\alpha_1}$ initial
vertices and player $R$ can choose up to $N^{\alpha_2}$ initial
vertices.  What are the Nash Equilibrea of this game? Are the payoffs
in the Nash Equilibrea close to the payoffs obtained if the two players
place the initial sets at random? Similar game theoretic questions may
be asked if players alternate in placing the elements of $\B_0$ and
$\R_0$.

As far as we know this game was first defined by Bharathi, Kempe and
Salek in \cite{BharathiKempeSalek07}. Their paper provides an
approximation algorithm for the best response and shows that the
social price of competition is at most $2$ but does not analyze the
utilities of each of the players in a Nash Equilibrea.  A different
direction of future study is extending the result in the current work
to more realistic models of social networks and marketing.  In
particular it would be interesting to study the same question on
preferential attachment random graphs and other more realistic models of
social networks. We expect that for such graphs, game theoretic
consideration can play an important role due to the different degrees
and connectivity of different vertices.

\subsection{Related work}
As mentioned earlier, diffusion and growth processes have been studied
intensely in the past few years in relation to many areas such as
sociology, economics and engineering. Among the models studied are
\textit{stochastic cellular automata} (see, for example
\cite{Wolfram86}, \cite{GoLiMu:01a}, \cite{GoLiMu:01b}), \textit{the
  voter model} which was first introduced by Clifford and Sudbury in
\cite{CliffordSudbury73} and has been much studied since in, for
example, \cite{HolleyLiggett75}, 
\cite{DonnellyWelsh83}, \textit{the contact process} (see, for
example, \cite{Griffeath81}), \textit{the stochastic Ising model} (see
\cite{Glauber63}, \cite{Bremaud01}), and \textit{the influence model}
(see \cite{AsRoLeVe:01}).

Recently, a strong motivation for analyzing diffusion processes has
emanated from the study of viral marketing strategies in data mining
(see, for example, \cite{DomingosRichardson:01},
\cite{DomingosRichardson:02}, \cite{KeKlTa:03}, \cite{KeKlTa:05}). In
this model one takes into account the ``network value'' of potential
customers, that is, it seeks to target a set of individuals whose
influence on the social network through word-of-mouth effects is
high. For a given diffusion process, we define the influence
maximization problem. For each initial set of active nodes $S$, we
define $\sigma(S)$ to be the expected size of the set of active nodes
at the end of the process. In the influence maximization problem, we
aim to find a set $S$ of fixed size that maximizes $\sigma(S)$. In
attempts to find a set of influential individuals, heuristic
approaches such as picking individuals of high degree or picking
individuals with short average distance to the rest of the network
have been commonly used, typically with no theoretic guarantees (see
\cite{WassermanFaust94}). In \cite{KeKlTa:03} it was shown that the
influence maximization problem is NP-hard to approximate within a
factor of $1 - e^{-1} + \varepsilon$ for all $\varepsilon
> 0$. On the other hand, in \cite{KeKlTa:05} it was shown that under
the assumption that the function $\sigma$ is \textit{submodular}, for
every $\varepsilon > 0$ it is possible to find a set $S$ of fixed size
that is a $(1 - e^{-1} - \varepsilon)$-approximation of
the maximum in random polynomial time. In \cite{MosselRoch10} it was
proven that the function $\sigma$ is indeed submodular.

As mentioned earlier the paper \cite{BharathiKempeSalek07} defines the
competitive influence maximization problem on general graphs. We
believe that an interesting research direction is to show that for
random $d$-regular graphs, the payoffs of the two players at each Nash
Equilibrea are essentially the same as the payoff obtained by playing
according to random strategies.


\section{Coupling with the configuration model}

The configuration model (CM), introduced by Bollob\'{a}s in \cite{Bollobas80}, is a randomized algorithm used to construct a uniform random $d$-regular labeled graph on $N$ vertices (we always assume that $dN$ is even, as otherwise there is no such graph).
In this
model we view each vertex $i \in [N]=\{1,2,\dots,N\}$ of the graph as a set $H(i)$ of $d$
\textit{half-edges}. We pick a uniform perfect matching on the set $\cup_{i \in [N]} H(i)$ of all
$dN$ half-edges (recall that $dN$ is even), and contract each $d$-tuple of half-edges $H(i)$ back to a
single vertex. This yields a $d$-regular graph with the vertex set $[N]$, and in which every coupled pair of a half-edge in $H(i)$ and $H(j)$ gives an edge connecting the vertices $i$ and $j$.
It is shown in \cite{Bollobas80} that with probability
that tends to $e^{\frac{1-d^2}4}$ as $N \rightarrow \infty$,
this is a simple $d$-regular graph. Moreover, conditioning
on the event that the graph is simple, it is uniformly distributed
among all simple $d$-regular labeled graphs on $N$ vertices. 

We will couple the configuration model (CM) and the competing process (CFPP), and will prove the statements in Theorems \ref{thm:main_both_large} to \ref{thm:boundary} for the coupled process. This will be sufficient, as these statements hold asymptotically almost surely and the probability of generating a simple $d$-regular graph is bounded from below.

To make the coupling easier we will slightly modify the competing first passage percolation model we study, by also coloring the edges.
In the modified process (MCFPP) we start like before with two disjoint subsets  $\mathcal{B}_0$ and  $\mathcal{R}_0$ of the vertex set colored blue and red respectively, and initially we set all the edges uncolored. At the $n$-th step we choose a pair $(u,e)$ of a vertex $u$ in $\mathcal{B}_n \cup \mathcal{R}_n$ and an incident uncolored edge $e$, using the same probabilities as before: every pair for which $u \in \mathcal{B}_n$ is chosen with the probability proportional to $\beta$ and every pair for which $u \in \mathcal{R}_n$ is chosen with the probability proportional to $\rho$.
Then we color the edge $e$ in the color of $u$. If the other end of $e$ is uncolored, we also color it into the color of $u$. In this modification we can have steps that do not yield to coloring of new vertices, but it is easy to see that conditioned that the set $\mathcal{B}_n \cup \mathcal{R}_n$ does increase, the transition probabilities are the same as in the original model. Thus the distribution of  $(\mathcal{B}_{\text{fin}},\mathcal{R}_{\text{fin}})$ is unchanged.

We now describe the coupling of CM and MCFPP which will be refer to as CP.
We first focus on the  case when $(\mathcal{B}_0,\mathcal{R}_0)$ is chosen uniformly of size $(B_0,R_0)$, but will allow for one of $\mathcal{B}_0$ or $\mathcal{R}_0$ to be empty, (this is important as it will correspond to the evolution of the process when one set is given an advantage). In the coupling $\mathcal{X}_n$, $\mathcal{Y}_n$, $\mathcal{Z}_n$ and $\mathcal{W}_n$ will denote the set of active blue, active red, uncolored and explored half-edges, and these four sets will be disjoint for every $n$.
The coupling CP goes as follows.
\begin{itemize}
\item[i)]
Start with the random pair of subsets $(\mathcal{B}_0, \mathcal{R}_0)$ which is chosen uniformly at random from all disjoint pairs of subsets of $[N]$ of sizes $(B_0,R_0)$. 
Denote by $\mathcal{X}_0$ and $\mathcal{Y}_0$ the half-edges corresponding to vertices in $\mathcal{B}_0$ and $\mathcal{R}_0$ respectively, that is $\mathcal{X}_0 = \cup_{i \in \mathcal{B}_0}H(i)$ and $\mathcal{Y}_0 = \cup_{i \in \mathcal{R}_0}H(i)$. 
Set the uncolored and explored half-edges initially as $\mathcal{Z}_0 = \big(\mathcal{X}_0\cup  \mathcal{Y}_0\big)^c$ and  $\mathcal{W}_0=\emptyset$.
\item[ii)]
At every time step $n \geq 0$ choose a half edge $x$ in $\mathcal{X}_n \cup \mathcal{Y}_n$, the ones in $\mathcal{X}_n$ with probability proportional to $\beta$, and the ones in $\mathcal{Y}_n$ with probability proportional to $\rho$. Match the chosen half edge to a uniformly chosen half edge $y$ in $\mathcal{X}_n\cup \mathcal{Y}_n\cup \mathcal{Z}_n\backslash \{x\}$. Make $x$ and $y$ explored.
Let $i,j \in [N]$ be such that $x \in H(i)$ and $y \in H(j)$.
 If $y\in \mathcal{Z}_n$ then color all the half-edges in $H(j)$ with the color of $x$.  More precisely, if $x \in \mathcal{X}_n$ set  
\[
\mathcal{X}_{n+1} = \mathcal{X}_n \cup H(j) \backslash \{x,y\}, \  \mathcal{Y}_{n+1} = \mathcal{Y}_n, \ \mathcal{Z}_{n+1} = \mathcal{Z}_n \backslash H(j), \ \text{ if } y \in \mathcal{Z}_n,
\] 
\[
\mathcal{X}_{n+1} = \mathcal{X}_n  \backslash \{x,y\}, \  \mathcal{Y}_{n+1} = \mathcal{Y}_n, \ \mathcal{Z}_{n+1} = \mathcal{Z}_n , \ \text{ if } y \in \mathcal{X}_n,
\]
\[
\mathcal{X}_{n+1} = \mathcal{X}_n \backslash \{x\}, \  \mathcal{Y}_{n+1} = \mathcal{Y}_n\backslash \{y\}, \ \mathcal{Z}_{n+1} = \mathcal{Z}_n,  \ \text{ if } y \in \mathcal{Y}_n,
\]
and $\mathcal{W}_{n+1} = \mathcal{W}_n \cup \{x,y\}$ in every case. 
If $x \in \mathcal{Y}_n$ proceed analogously. 
Connect vertices $i$ and $j$ with an edge and color this edge in the color of $i$ (same as the color of $x$). If $j$ is uncolored, color it into the color of $i$ as well. More precisely set 
\[
\mathcal{B}_{n+1} = \mathcal{B}_n, \ \mathcal{R}_{n+1} = \mathcal{R}_n, \ \text{ if } j \in \mathcal{B}_n \cup \mathcal{R}_n \text{ (} \Leftrightarrow y \in \mathcal{X}_n \cup \mathcal{Y}_n \text{)},
\]
\[
\mathcal{B}_{n+1} = \mathcal{B}_n \cup \{j\}, \ \mathcal{R}_{n+1} = \mathcal{R}_n,\ \text{ if } j \notin \mathcal{B}_n \cup \mathcal{R}_n \text{ (} \Leftrightarrow y \notin \mathcal{X}_n \cup \mathcal{Y}_n \text{)},\text{ and } i \in \mathcal{B}_n \text{ (} \Leftrightarrow x \in \mathcal{X}_n \text{)},
\]
\[
\mathcal{B}_{n+1} = \mathcal{B}_n, \ \mathcal{R}_{n+1} = \mathcal{R}_n \cup \{j\}, \ \text{ if } j \notin \mathcal{B}_n \cup \mathcal{R}_n \text{ (} \Leftrightarrow y \notin \mathcal{X}_n \cup \mathcal{Y}_n \text{)},\text{ and } i \in \mathcal{R}_n \text{ (} \Leftrightarrow x \in \mathcal{Y}_n \text{)}.
\]

\item[iii)] Stop the algorithm when $\mathcal{X}_n = \mathcal{Y}_n= \emptyset$. 
\end{itemize}
Note the algorithm can fail to color the whole graph only if, for some $n$, we have $\mathcal{X}_n = \mathcal{Y}_n = \emptyset$ and $\mathcal{Z}_n \neq \emptyset$. 
This is consistent with the fact that random $d$-regular graph may fail to be connected, and if all the initial colored vertices are in a single connected component, the CFPP process will not color all the vertices.
However, for any $d \geq 3$ the probability of this event converges to $0$, as $N \to \infty$, see \cite{Bollobas01, JLR00}. As all our results hold asymptotically as $N\to \infty$ this will not be an issue.


By $X_n$, $Y_n$ and $Z_n$ denote the sizes of $\mathcal{X}_n$, $\mathcal{Y}_n$ and  $\mathcal{Z}_n$ respectively. Denoting $M= X_0 + Y_0 + Z_0$,  and observing that at  each time two half-edges become inactive we have either $X_n + Y_n +Z_n = M -2n$ or $X_n + Y_n=0$. The process $(X_n, Y_n, Z_n)$ is a Markov chain with the following transition probabilities (given $X_n + Y_n >0$)
\begin{align*}
\mathbb{P}(X_{n+1} = X_n + d-2, Y_{n+1} = Y_n, Z_{n+1} = Z_n-d) & = \frac{\beta X_n}{\beta X_n + \rho Y_n}\frac{Z_n}{X_n +Y_n +Z_n -1} \\
\mathbb{P}(X_{n+1} = X_n , Y_{n+1} = Y_n + d-2, Z_{n+1} = Z_n-d) & = \frac{\rho Y_n}{\beta X_n + \rho Y_n}\frac{Z_n}{X_n + Y_n +Z_n -1} \\
\mathbb{P}(X_{n+1} = X_n-2 , Y_{n+1} = Y_n, Z_{n+1} = Z_n) & = \frac{\beta X_n}{\beta X_n + \rho Y_n}\frac{X_n-1}{X_n + Y_n +Z_n -1} \\
\mathbb{P}(X_{n+1} = X_n , Y_{n+1} = Y_n-2, Z_{n+1} = Z_n) & = \frac{\rho Y_n}{\beta X_n + \rho Y_n}\frac{Y_n-1}{X_n + Y_n +Z_n -1} \\
\mathbb{P}(X_{n+1} = X_n-1 , Y_{n+1} = Y_n-1, Z_{n+1} = Z_n) & = \frac{(\rho + \beta) X_nY_n}{(\beta X_n + \rho Y_n)(X_n + Y_n  +Z_n -1)}.
\end{align*}
If at some point $X_{n_0} +Y_{n_0}=0$ and $Z_{n_0}>0$ set $X_n =X_{n_0}$, $Y_n =Y_{n_0}$ and $Z_n =Z_{n_0}$ for all $n \geq n_0$.
The advantage of this coupling is that the process $(B_n,R_n)$ can be studied through the process $(X_n,Y_n,Z_n)$. This process in turn, is completely described by the above transition probabilities. Indeed, most of the technical work in this chapter is devoted to establishing maximal inequalities for the process $(X_n,Y_n,Z_n)$.

Next we justify the coupling, that is explain why the CP process produces the graph with the set of blue and red vertices, which is equal in distribution to $(G,\mathcal{B}_{\text{fin}}, \mathcal{R}_{\text{fin}})$ produced by first generating a random regular graph $G$ and then running MCFPP on it (denote this by CM$\times$MCFPP). Denote by $\mathbf{A}_n$ the cluster formed by the colored edges and colored vertices at time $n$ in CM$\times$MCFPP and by $\mathbf{A}_n'$ the cluster formed by  the colored edges and colored vertices at time $n$ in CP (both $\mathbf{A}_n$ and $\mathbf{A}_n'$  contain the information about the color of each edge and vertex). It suffices to show that $\mathbf{A}_n$ and  $\mathbf{A}_n'$ have the same distribution for every $n$. We show this inductively. For $n=0$ the claim is obvious, as both $\mathbf{A}_0$ and $\mathbf{A}_0'$ consist of uniform disjoint subsets of $[N]$ of prescribed size colored blue and red respectively, and all the edges are uncolored. Condition on some realization $\mathbf{A}_n'=A$. Observe that the probability that $\mathbf{A}_{n+1}'$ is formed by connecting a vertex $i\in A$ to an uncolored vertex 
is given by
\[
\frac{\tau E_n(i)}{\beta X_n + \rho Y_n}\frac{d}{M-2n-1},
\]
where $\tau=\beta$ if $i$ is blue and $\tau=\rho$ if $i$ red, and
$E_n(i)$ is the number half-edges incident to $i$ and not present in $A$ (that is $|(\mathcal{X}_n \cup \mathcal{Y}_n) \cap H(i)|$). 

To study the same conditional probability for CM$\times$MCFPP observe that the event $\{\mathbf{A}_n=A\}$ happens if and only if the graph generated by CM supports the cluster $A$, and MCFPP on this graph generates $A$ in the $n$-th step. The event that $\mathbf{A}_{n+1}$ is formed by joining a vertex $i\in A$ with an uncolored vertex $j$ happens if and only if CM produces a graph in which there is $N \geq 1$ edges connecting $i$ and $j$ (as $j$ is uncolored, such edges can not be a part of the cluster $A$), and in the next step MCFPP spreads along one of these edges.
For any of the $E_n(i)$ half-edges, conditioned on $\{\mathbf{A}_n=A\}$, the probability it gets matched to a half-edge in $H(j)$ is simply $d/(M-n-1)$. Thus $\mathbb{E}(N|\mathbf{A}_n=A) = dE_n(i)/(M-2n-1)$. On the other hand, since $X_n$ and $Y_n$ are simply numbers of active half-edges emanating from blue and red colored vertices in $A$ respectively, given $N=k$ the conditional probability that MCFPP spreads along one of these edges is simply $\tau k/(\beta X_n+\rho Y_n)$. Thus the probability of the observed event in CM$\times$MCFPP is
\[
\sum_{k=1}^{E_n(i)} \frac{\tau k\mathbb{P}(N=k)}{\beta X_n + \rho Y_n} = \frac{\tau \mathbb{E}(N)}{\beta X_n + \rho Y_n} = \frac{\tau E_n(i)}{\beta X_n + \rho Y_n}\frac{d}{M-2n-1}.
\]
Similar calculations show that the  conditional probabilities agree for the events that $j$ is already chosen red or blue.


The situation when one process starts earlier than the other is handled in almost exactly the same way. 
The initial dynamics, when only one process grows, can be described throught stage 0 of the CP process defined below. 
In this stage all the variables will be denoted with the superscript $0$.
First observe that the above coupling makes sense even if $\mathcal{R}_0 = \emptyset$. To generate random subsets $(\mathcal{B}_0,\mathcal{R}_0)$ as in Definition \ref{def: early initial sets} simply run CP with $\mathcal{B}_0^0$ as a uniformly chosen subset of $[N]$ of size $k_0$, $\mathcal{R}_0^0 = \emptyset$ until $\mathcal{B}_k^0$ grows to the prescribed size $B_0$. More precisely, we define the stage 0 of CP as follows.
\begin{itemize}
\item[i)] Take $\mathcal{B}_0^0$ as a uniformly chosen subset of $[N]$ of size $k_0$,  $\mathcal{R}_0^0 = \emptyset$, $\mathcal{X}_0^0 = \cup_{i \in \mathcal{B}_0^0}H(i)$, $\mathcal{Y}_0^0 = \emptyset$, $\mathcal{Z}_0^0= (\mathcal{X}_0^0)^c$, $\mathcal{W}_0^0 = \emptyset$.
\item[ii)] Run CP with the above initial conditions.
\item[iii)] Stop CP at $T$ when $|\mathcal{B}_T^0| = B_0$. Set $\mathcal{B}_0 = \mathcal{B}_T^0$, $\mathcal{X}_0 = \mathcal{X}_T^0$, $\mathcal{W}_0 = \mathcal{W}_T^0$.
\end{itemize}
By the strong Markov property, CP for the initial conditions when $(\mathcal{B}_0,\mathcal{R}_0)$ is chosen as a uniform subset of size $(B_0,R_0)$, with $\mathcal{B}_0$ center of size $k_0$ goes as follows
\begin{itemize}
\item[i)] Run stage 0 of CP.
\item[ii)] Take $\mathcal{B}_0$, $\mathcal{X}_0$ and $\mathcal{W}_0$ as produced in stage 0, $\mathcal{R}_0$ as a uniform subset of $\mathcal{B}_0^c$ of size $R_0$, $\mathcal{Y}_0 = \cup_{i \in \mathcal{R}_0}H(i)$ and $\mathcal{Z}_0 = \big(\mathcal{X}_0 \cup \mathcal{Y}_0 \cup \mathcal{W}_0 \big)^c$.
\item[iii)] Run CP with the initial conditions from ii).
\end{itemize}
As in the usual CP we denote the sizes of corresponding sets by the same letter in the normal font and set $M= X_ 0+ Y_0 + Z_0 = dN - W_0$.

The following   result shows how one can estimate the final sizes simply from $X_0=|\mathcal{X}_0|$ and $Y_0=|\mathcal{Y}_0|$. From this theorem we will derive all our results about the competing process on the random regular graph. 
Note that the notation $x= (1 \pm \varepsilon) y$ means that $(1-\varepsilon)y \leq x \leq (1+\varepsilon)y$.
As usual a property holds asymptotically almost surely if it's probability converges to 1 as $N \to \infty$. 
A more careful of the analysis of the proofs of the following two theorems shows that the rate at which the probabilities converge to zero does not depend on the initial values of $X_0$, $Y_0$, except through the sequence $L_M$.

\begin{theorem}\label{thm:(1+o(1))_concentration_diff_rates_0}
Let $(L_N)$ be a sequence converging to $\infty$.  Assume that in a CP process, with the admissible initial conditions, sequences $X_0=X_0(N)$, $Y_0=Y_0(N)$ and $Z_0=Z_0(N)$ satisfy $\min(X_0,
 Y_0) \geq L_N$ and $(X_0+
 Y_0)/M \to 0$. When $\beta = \rho$, for any $\varepsilon
>0$ with probability converging to $1$ (as $N \to \infty$) we have
\begin{equation}\label{eq:1+o(1)_concentration_beta_equal_1}
R_{\text{fin}}-R_0 = (1 \pm \varepsilon)
\frac{Y_0M}{d(X_0+Y_0)}.
\end{equation}
When $\beta \neq \rho$, for any $\varepsilon
>0$ with probability converging to $1$ (as $N \to \infty$) we have
\begin{equation}\label{eq:1+o(1)_concentration_beta_no_1}
R_{\text{fin}} - R_0 = (1 \pm \varepsilon) \frac{M}{d}\int_0^1
\phi_{\beta,\rho}^{-1}
\left(MX_0^{\frac{\rho}{\beta-\rho}}Y_0^{\frac{\beta}{\rho-\beta}}
\Big(t^{1/d} -
t^{1-1/d}\Big)\right)dt.
\end{equation}
Here $\phi_{\beta,\rho}^{-1} \colon \mathbb{R}^+ \to (0,1)$ is an inverse of a one-to-one function defined as
\begin{equation}\label{eq: phi 2 parameter function}
\phi_{\beta,\rho} (s)= \Big(\frac{\beta s}{\rho (1-s)}\Big)^{\frac{\rho}{\rho-\beta}} +  \Big(\frac{\rho (1-s)}{\beta s}\Big)^{\frac{\beta}{\beta-\rho}}.
\end{equation}
\end{theorem}
 
Under the same assumptions we have the analogous result for the boundary. Note that $D_0$ denotes the size of the boundary in possibly partially constructed initial graph.
\begin{theorem}\label{thm: 1+o(1) concentration for boundary}
Let the assumptions in Theorem \ref{thm:(1+o(1))_concentration_diff_rates_0} hold. For $\beta = \rho$ and every $\varepsilon > 0$ with probability converging to $1$ (as $N \to \infty$) we have
\begin{equation}\label{eq:(1+o(1))_concentration_boundary_eq_rates}
D_{\text{fin}} - D_0= (1\pm\varepsilon)M\frac{(d-2)X_0Y_0}{d(X_0+Y_0)^2}
\end{equation}
For  $\beta \neq \rho$ and every $\varepsilon > 0$ with probability converging to $1$ (as $N \to \infty$) we have
\begin{equation}\label{eq:(1+o(1))_concentration_boundary_diff_rates}
D_{\text{fin}} -D_0 = (1\pm\varepsilon) \frac{M}{2}\int_0^1\Big(1-t^{d/2-1}\Big) \kappa_{\beta,\rho} \circ \phi^{-1}_{\beta,\rho} 
\left(MX_0^{\frac{\rho}{\beta-\rho}}Y_0^{\frac{\beta}{\rho-\beta}}\Big(t^{1/2} -
t^{(d-1)/2}\Big)\right)dt,
\end{equation}
where $\kappa_{\beta,\rho}(t) = \frac{(\beta + \rho)t(1-t)}{\beta t + \rho(1-t)}$ and $\phi_{\beta,\rho}$ is  as in \eqref{eq: phi 2 parameter function}.
\end{theorem}

It doesn't seem to be possible to simplify the expressions in \eqref{eq:1+o(1)_concentration_beta_no_1} and \eqref{eq:(1+o(1))_concentration_boundary_diff_rates}.
Of course, one could replace the expressions under the inverse function $\phi_{\beta,\rho}$ and redefine $\phi_{\beta,\rho}$ to obtain equivalent formulas. However, this formulation seems to be very convenient as the expression $X_0^{\rho/(\beta-\rho)}Y_0^{\beta/(\rho-\beta)}$ remains unchanged if we switch the roles of the processes $(B,R)$ and change the  parameters $(\beta,\rho)$ to $(\rho,\beta)$. On the other hand, just as one expects $\phi_{\beta,\rho}(s) = \phi_{\rho,\beta}(1-s)$ which gives
 $\phi_{\beta,\rho}^{-1}(t) = 1 - \phi_{\rho,\beta}^{-1}(t)$ and $\kappa_{\beta,\rho}(t) = \kappa_{\rho,\beta}(1-t)$.

To prove the main theorems from Theorem \ref{thm:(1+o(1))_concentration_diff_rates_0} one needs to relate $X_0$ and $Y_0$ to $B_0$ and $R_0$. In the setting of Theorem \ref{thm:main_both_large} when $(\mathcal{B}_0,\mathcal{R}_0)$ are chosen uniformly, this is trivial as $X_0 = dB_0$ and $Y_0=dR_0$. However, when the $(1+o(1))$ concentration fails this can become more complicated. This is the content of the following section which, deduces Theorems \ref{thm:main_both_large} to \ref{thm:boundary} from Theorems \ref{thm:(1+o(1))_concentration_diff_rates_0} and \ref{thm: 1+o(1) concentration for boundary}. Having done enough preparation these proofs will be very short.
Note, that the proofs of parts of Theorem \ref{thm:boundary} follow verbatim the short proofs of the corresponding statements of boundary sizes, so they will be omitted.

\section{Deducing the main theorems}

First we start with simple technical remarks that will be of use in our estimates.
\begin{remark}\label{rem:nesting}
Let $(P_n)_{n \geq 0}$ be a Markov process adapted to the filtration $(\mathcal{F}_n)$, such that $P_0 = 0$, $P_{n+1}-P_n \in \{0,1\}$, and let $\tau$ be a stopping time adapted to the same filtration. Let $\sigma$ be the first index $k$ such that $P_{k}-P_{k-1} =0$. If on the event $\{\tau>k\}\cap \{\sigma > k\}$ we have $\mathbb{P}(P_{k+1}-P_k =1|\mathcal{F}_k) \geq 1-\delta_k$, then a simple induction in $n$ shows that
\begin{equation}\label{eq:concentration_prob_very_likely}
\mathbb{P}(P_n = n) \geq \prod_{k=1}^n (1-\delta_k) - \mathbb{P}(\tau \leq n) \geq 1-\sum_{k=1}^n \delta_k - \mathbb{P}(\tau \leq n).
\end{equation}
If on the other hand, on the event $\{\tau>k\}$ we have $\mathbb{P}(P_{k+1}-P_k =1|\mathcal{F}_k) =(1\pm\varepsilon) p_k$, then $P_n$ can be stochastically dominated from below by $(Y_1 + \dots +Y_n)1_{\{\tau > n\}}$, and from above by  $Z_1 + \dots +Z_n + n1_{\{\tau \leq n\}}$, where $Y_k$ are iid Bernoulli with parameters $(1-\varepsilon)p_k$, and $Z_k$ are iid Bernoulli with parameters $(1+\varepsilon)p_k$, both independent of $\tau$. In particular, by Chebyshev bound 
\begin{equation}\label{eq:concentration_prob_very_unlikely}
\mathbb{P}(P_n= (1\pm 2\varepsilon)\sum_{k=1}^np_k) \geq 1 - \frac{1+\varepsilon}{\varepsilon^2 (1-\varepsilon)\sum_{k=1}^n p_k} - \mathbb{P}(\tau \leq n).
\end{equation}
Finally observe $\mathbb{E}(P_{\tau\wedge n}) \leq (1+\varepsilon)\sum_{k=1}^n p_k$.
\end{remark}

The probability that the process $(X_n,Y_n,Z_n)$ gets ``stuck'' at some point due to running out of active half-edges has probability converging to zero, as $N\to\infty$. 
This is due to the fact that the random graph generated by the configuration model is connected with probability converging to 1 (see also Theorem \ref{thm: urn applied to model}).
Since the convergence in our results is either in distribution or in probability, we can assume in the proofs that the process does not get stuck.
This is justified by the above remark, because the time when the active half-edges run out is a stopping time. 

\begin{remark}\label{rem:confg_model_random_regular_graphs}
As mentioned, conditioned on the event that the random graph generated by the configuration model is simple, it's law is exactly that of a random regular graph. Since the probability that the generated graph is simple converges to $e^{(1-d^2)/4}$, we don't worry about this conditioning in the proofs of claims which hold with probability converging to 1 (as $N \to \infty$). However, we have to be somewhat more careful when proving convergence in distribution (as in Theorems \ref{thm:small_sizes} to \ref{thm:boundary}). To prove these results we will run the process up to the (possibly random) time $T$ which is large compared to the sizes of initial components $X_0$ and $Y_0$, but still small compared to the total size of the graph $M$, that is, as $M \to \infty$ we have $X_T, Y_T \to \infty$, $(X_0+Y_0)/T\to 0$ and $T/M\to 0$, where all convergences are uniform. Then we prove that at this time $T$, the coefficient $MX_T^{\rho/(\beta-\rho)}Y_T^{\beta/(\rho-\beta)}$ appearing in \eqref{eq:1+o(1)_concentration_beta_no_1} and \eqref{eq:(1+o(1))_concentration_boundary_diff_rates} converges in distribution. To finish the proof, we will invoke the concentration results from Theorems \ref{thm:(1+o(1))_concentration_diff_rates_0} and \ref{thm: 1+o(1) concentration for boundary}. 
Indeed it is a simple observation that this implies the convergence in distribution for the random graph generated by the configuration model.
The fact that conditioning on the event that this random graph is simple does not change the limiting distribution, follows from the following two facts: with probability converging to 1 there are no loops or double edges generated up to time $T$; conditioned on the realization of the configuration model at time $T$, the probability of generating a simple graph converges to $e^{(1-d^2)/4}$. 
The first claim is straightforward from the fact that at each step the probability of creating a loop or a double edge is bounded from above by $C/M$ (where the constant $C$ depends only on $d$) and that $T/M \to 0$. 
To justify the second claim, observe that at time $T$ we have $(1-o(1))N$ of vertices with all $d$ half-edges unmatched, and $o(1)N$ vertices which have at least one (and no more than $d-1$) half-edges unmatched. Then, we only need to argue that a probability that a configuration model which starts with $(1-o(1))N$ vertices of degree $d$ and $o(1)N$ vertices of degree less than $d$, generates a simple graph with probability converging to $e^{(1-d^2)/4}$. Indeed, under rather general assumptions the probability that a uniformly sampled multigraph with $n$ vertices and vertex degrees $(d_i)_{1 \leq i \leq n}$ is asymptotically equal to
\[
\exp\left(\frac{1}{4} - \frac{1}{4}\left(\frac{\sum_{i=1}^n d_i^2}{\sum_{i=1}^nd_i}\right)^2\right),
\]
see Theorem II.16 in \cite{Bollobas01}, which implies the second claim.
\end{remark}

If one of the initial sets, say $\mathcal{B}_0$, is assumed to have a small center then we still have $Y_0 = dR_0$. However, to estimate $X_0$ one needs to understand the evolution of the number of active edges, in the stage 0 of CP when only the blue set evolves. The small center assumption yields $X_0 = (1+o(1))(d-2)B_0$ with high probability, see Lemma \ref{lemma: first stage analysis} below. 


First we introduce the notion of an urn model that will be used throughout the chapter.
\begin{definition}\label{def:urn_model}
We say that a process $(S_n,Z_n)_n$ adapted to a filtration $(\mathcal{F}_n)$ is a P\'{o}lya urn process with a replacement matrix $A=\left(\begin{array}{ll}a_{11} & a_{12} \\ a_{21} & a_{22}\end{array}\right)$  if conditioned on $\mathcal{F}_n$ and on the event $\{S_n \geq 0, Z_n \geq 0, S_n +Z_n>0\}$, with probability $S_n /(S_n+ Z_n)$ we have $(S_{n+1},Z_{n+1}) = (S_n,Z_n)+(a_{11},a_{12})$ and otherwise $(S_{n+1},Z_{n+1}) = (S_n,Z_n)+(a_{21},a_{22})$.
\end{definition}

\begin{remark}\label{rem:dff_rates_and_urn_model}
The process $(X_n,Y_n)$ such that $(X_{n+1},Y_{n+1}) = (X_n+a_1,Y_n)$,
with the probability $\alpha_1 X_n/(\alpha_1 X_n + \alpha_2 Y_n)$ and
$(X_{n+1},Y_{n+1}) = (X_n,Y_n+a_2)$ otherwise, can be thought of as an
urn process in which we draw balls with different weights. It is easy
to observe that the process $(S_n,Z_n) = (\alpha_1 X_n,\alpha_2 Y_n)$
is indeed an urn process with $S_0 = \alpha_1 X_0$, $Z_0 = \alpha_2
Y_0$ and the replacement matrix $\left(\begin{array}{ll} \alpha_1 a_1
  & 0 \\ 0 & \alpha_2 a_2\end{array}\right)$.
\end{remark}

From the jump probabilities of $(X_n,Y_n,Z_n)$  we immediately obtain the following lemma, which is true even in the stage 0 of CP, that is for $Y_0=0$.

\begin{lemma}\label{lemma:urn_in_configuration}
The process $(Z_n,X_n+Y_n-1)$ is an urn model with the replacement  matrix 
\[
A = \left(\begin{array}{rr} -d & d-2 \\ 0 &
  -2 \end{array} \right)~.
\]
\end{lemma}
The following result will be used extensively throughout the paper. Having in mind Lemma \ref{lemma:urn_in_configuration}, and the fact that $X_n , Y_n , Z_n$ pertain their values after the process is stopped due $X_n +Y_n=0$, 
it follows directly from Theorems 1.1 and 1.4 in \cite{antunovic}. 

\begin{theorem}\label{thm: urn applied to model}
For any $\varepsilon > 0$ and any sequence $(L_M)$ converging to $\infty$ there exists a sequence $\delta_M$ (which doesn't depend on $X_0$ and $Y_0$), such that in the CP model started with $X_0$ blue and $Y_0$ red half-edges
and $Z_0=M-X_0-Y_0$ uncolored half-edges, for any $\varepsilon > 0$
the events
\[
\left\{ Z_n = (1\pm \varepsilon) Z_0(1-2n/M)^{d/2} ,
\text{ for all } 0 \leq n \leq \frac{M}{2}\Big(1 - \frac{L_M}{Z_0^{2/d}}\Big) \right\},
\]
\[
\left\{ Z_n =0,
\text{ for all } n \geq \frac{M}{2}\Big(1 - \frac{1}{Z_0^{2/d}L_M}\Big) \right\},
\]
and
\[
\left\{
X_n+Y_n = (1\pm \varepsilon)\Big((M-2n)- Z_0(1-2n/M)^{d/2}\Big), \text{ for all } 0 \leq n < M/2 \right\}
\]
have probabilities at least $1-\delta_M$. 
\end{theorem}

The following lemma relates the number of final number number of colored half-edges and colored vertices in the stage 0 of CP. This is relevant since the natural initial parameter $B_0$ appearing in the statements of main theorems, has to be replaced by  $X_0$ for the application of theorems \ref{thm:(1+o(1))_concentration_diff_rates_0} and \ref{thm: 1+o(1) concentration for boundary}. The lemma is a trivial consequence that in stage 0 of CP, in most steps we are coloring new vertices and thus increase the value of $X_n^0$ by $d-2$ and the value of $B_n^0$ by 1. 

\begin{lemma}\label{lemma: first stage analysis}
Assume that $(\mathcal{B}_0, \mathcal{R}_0)=(\mathcal{B}_0(N), \mathcal{R}_0(N))$ is a sequence of uniform subsets of size $(B_0,R_0)$ with $\mathcal{B}_0$ of small center (as in Definition \ref{def: early initial sets}) and $\lim_N B_0/N=0$. 
Then, for any $\varepsilon > 0$ the probability that the stage 0 of CP that generates $\mathcal{B}_0$ ends with
\[
X_0 = (1\pm \varepsilon) (d-2)B_0,
\]
converges to 1, as $N \to \infty$.
\end{lemma}

\begin{proof}
Recall the notation from the definition of stage 0 of CP.
If at the $n$-th step of stage 0 CP we spread to a new vertex then we have $B_{n+1}^0 = B_n^0 +1$ and $X_{n+1}^0 = X_n^0 + d-2$. Otherwise, $B_{n+1}^0 = B_n^0$ and $X_{n+1}^0 = X_n^0-2$. This leads to a simple deterministic relation 
\begin{equation}\label{eq:early_start_deterministic}
X_n^0 - X_0^0 = d(B_n^0 - B_0^0) - 2n.  
\end{equation}
By the definition of stage 0 of CP we have $B_0/X_0^0 \to \infty$.
Next we will show that for every $\varepsilon > 0$ there exists $\delta > 0$ such that for any sequence $L_N \to \infty$ 
\begin{equation}\label{eq:early_start_main_estimate}
X_n^0 - X_0^0 = (1\pm \varepsilon)(d-2)n, \text{ for all } X_0^0 L_N \leq n \leq \delta N,
\end{equation}
has probability which converges to $1$, as $N \to \infty$. Actually we don't need the lower bound $X_0^0L_N$ on $n$ here, (see the proof of Theorem \ref{thm:small_sizes}) but this will suffice for our needs in this lemma.
By \eqref{eq:early_start_deterministic} the event in \eqref{eq:early_start_main_estimate} implies
\begin{equation}\label{eq:early_start_2}
 B_n^0 - B_0^0 = (1\pm \varepsilon)n , \text{ for all } X_0^0L_N \leq n \leq \delta N.
\end{equation}
The lower bound above can reach the value of $(1-\varepsilon)\delta N$ and since $\lim_N B_0/N = 0$, for $N$ large enough we have $B_0 < (1-\varepsilon)\delta N$. Also taking $L_n$ so that $X_0^0L_N/B_0 \to 0$, the inequality in \eqref{eq:early_start_2} is also true for $n=T$ and since $B_0 = B_T^0$ 
\[
\lim_{N \to \infty}\mathbb{P}\left(\frac{B_0 -B_0^0}{1+\varepsilon} \leq T \leq \frac{B_0 -B_0^0}{1-\varepsilon}\right) =1.
\]
Then $n=T$ in \eqref{eq:early_start_deterministic} gives that with the probability converging to $1$
\[
 (B_0-B_0^0) \Big(d-\frac{2}{1-\varepsilon}\Big) \leq X_0 - X_0^0\leq (B_0-B_0^0) \Big(d-\frac{2}{1+\varepsilon}\Big) .
\]
Since $\varepsilon >0$ is arbitrary, $X_0^0 = dB_0^0$ and $B_0^0/B_0 \to 0$ the claim follows. 

Thus we are left to prove the claim in \eqref{eq:early_start_main_estimate}.
Since $Y_0^0=0$, we have $Y_n^0=0$ for all $n$, and Theorem \ref{thm: urn applied to model} implies that for every $\varepsilon > 0$
\[
X_n^0 = (1\pm \varepsilon)\Big((M^0-2n)- Z_0^0(1-2n/M^0)^{d/2}\Big),  \text{ for all }0 \leq n < M^0/2,
\]
with the probability converging to 1.
Recalling that $Z_0^0 = M^0-X_0^0$ the above event yields that $X_n^0 -X_0^0$ is at least
\[
(1-\varepsilon)(M^0-2n)\left(1-\Big(1-\frac{2n}{M^0}\Big)^{d/2-1}\right) - X_0^0\left(1-\Big(1-\frac{2n}{M^0}\Big)^{d/2}\right) - \varepsilon X_0^0,
\]
and at most
\[
(1+\varepsilon)(M^0-2n)\left(1-\Big(1-\frac{2n}{M^0}\Big)^{d/2-1}\right) + \varepsilon X_0^0. 
\]
As $M^0/N \to d$, we can choose $\delta$ small enough so that for all $0 \leq n \leq \delta N$ we have
\[
(1-\varepsilon/2)\frac{\lambda n}{M^0} \leq 1- \Big(1-\frac{2n}{M^0}\Big)^{\lambda/2} \leq (1+\varepsilon/2)\frac{\lambda n}{M^0},
\]
for $\lambda = d-2$ and $\lambda =d$. Using these inequalities, recalling $X_0^0$ is constant and reducing $\delta$ if necessary 
 yields \eqref{eq:early_start_main_estimate}.
\end{proof}

To deduce Theorems \ref{thm:main_both_large} to \ref{thm:boundary} we also need the following estimates on the function $\phi_{\beta,\rho}^{-1}$.

\begin{lemma}\label{lemma:inverse_estimates}
Let $\beta \neq \rho$ and the function $\phi_{\beta,\rho} \colon (0,1) \to
\mathbb{R}^+$ as in \eqref{eq: phi 2 parameter function}.
Then 
$\phi_{\beta,\rho}$ is one-to-one and onto and there are constants $c_1 <
c_2$ such that the inverse function $\phi_{\beta,\rho}^{-1} \colon
\mathbb{R}^+ \to (0,1)$ satisfies
\[
c_1 \Big(s^{(\rho-\beta)/\beta} \wedge 1\Big) \leq \phi_{\beta,\rho}^{-1}(s) \leq
\Big(c_2 s^{(\rho-\beta)/\beta}\Big) \wedge 1.
\]
Moreover,
\[
\frac{\phi_{\beta,\rho}^{-1}(s)}{s^{(\rho-\beta)/\beta}} \to \frac{\rho}{\beta}, \left\{\begin{array}{ll}\text{ as } s \to \infty, & \text{ for } \beta > \rho,  \\ \text{ as } s \to 0, & \text{ for } \rho > \beta. \end{array}\right.
\]
\end{lemma}

\begin{proof}
For $\beta > \rho$ the function $\phi_{\beta,\rho}$ is decreasing and
$\phi_{\beta,\rho}(1)=0$ and $\lim_{t \downarrow 0}
\frac{\phi_{\beta,\rho}(t)}{t^{\beta/(\rho-\beta)}}=(\rho/\beta)^{\beta/(\beta-\rho)}$. Therefore
$\phi_{\beta,\rho}^{-1}$ is decreasing with $\phi_{\beta,\rho}^{-1}(0)=1$ and
$\lim_{s \to \infty} \frac{\phi_{\beta,\rho}^{-1}(s)}{s^{(\rho-\beta)/\beta}}
=\frac{\rho}{\beta}$. This proves the claim for $\beta > \rho$. For $\beta < \rho$ the
function $\phi_{\beta,\rho}$ is increasing with $\phi_{\beta, \rho}(0)=0$, $\lim_{t
  \downarrow 0}\frac{\phi_{\beta,\rho}(t)}{t^{\beta/(\rho-\beta)}} =(\rho/\beta)^{\beta/(\beta-\rho)}$ and
$\lim_{t \uparrow 1} \phi_{\beta,\rho}(t) = \infty$. This implies that
$\phi_{\beta,\rho}^{-1}$ is also increasing and $\phi_{\beta,\rho}^{-1}(0) = 0$,
$\lim_{s\downarrow 0}\frac{\phi_{\beta,\rho}^{-1}(s)}{s^{(\rho-\beta)/\beta}}=\frac{\rho}{\beta}$ and
$\lim_{s \to \infty}\phi_{\beta,\rho}^{-1}(s) = 1$, which is enough to
deduce the claim in the case $\beta < \rho$.
\end{proof}

\begin{lemma}\label{lemma:unif_continuity_of_log}
For any $\varepsilon>0$ there exist a $\delta > 0$ such that $t = (1\pm \delta)s$ implies both 
\[
\phi_{\beta,\rho}^{-1}(t) = (1\pm \epsilon)\phi_{\beta,\rho}^{-1}(s)
\ \text{ and } \ 
1 -\phi_{\beta,\rho}^{-1}(t) = (1\pm \epsilon)(1- \phi_{\beta,\rho}^{-1}(s)).
\]
In other words, $\log \phi_{\beta,\rho}^{-1}$ and $\log (1-\phi_{\beta,\rho}^{-1})$ are both uniformly continuous on $\mathbb{R}^+$.
\end{lemma}

\begin{proof}
For the first inequality
we need to show that for any $\varepsilon > 0$ there is a $\delta > 0$ such that $x / y > 1+\delta$ implies that $\phi_{\beta,\rho}(x)/\phi_{\beta,\rho}(y)$ is either larger than $1+\varepsilon$ or smaller than $1-\varepsilon$. This holds since this is true when $\phi_{\beta,\rho}$ is replaced by either of the two summands on the right hand side of \eqref{eq: phi 2 parameter function}. For the second claim
use the symmetry $\phi_{\rho,\beta}^{-1}(t) = 1-\phi_{\beta,\rho}^{-1}(t)$.
\end{proof}


The following corollary clarifies the asymptotic behavior of $R_{\text{fin}}-R_0$. Note that $s \wedge t = \min(s,t)$.

\begin{corollary}\label{cor:nicer_main_estimates}
Let the assumptions in Theorem \ref{thm:(1+o(1))_concentration_diff_rates_0} hold.
Then there are constants $c$ and $C$ such that with probability converging to 1 (as
$N \to \infty$)
\begin{equation}\label{eq:1+o(1)_concentration_up_t_const_factor}
c \frac{Y_0/M}{(X_0/M)^{\rho/\beta}}\wedge c \leq
\frac{R_{\text{fin}} - R_0}{M/d} \leq
C\frac{Y_0/M}{(X_0/M)^{\rho/\beta}} \wedge 1.
\end{equation}
Moreover, if $\beta \neq \rho$ and $\frac{(Y_0/M)^\beta}{(X_0/M)^\rho}$ converges to zero, then with probability converging to 1 
\begin{equation}\label{eq:limiting_concentration}
\frac{R_{\text{fin}} - R_0}{M/d} = (1\pm\varepsilon)\frac{\rho Y_0/M}{\beta(X_0/M)^{\rho/\beta}}\int_0^1 \Big(t^{1/d}-t^{(d-1)/d}\Big)^{\rho/\beta -1}~dt.
\end{equation}
\end{corollary}


\begin{proof}
First we prove \eqref{eq:1+o(1)_concentration_up_t_const_factor}.
For
$\beta=\rho$ the inequalities in
\eqref{eq:1+o(1)_concentration_up_t_const_factor} with $C=1$ and $c=1/2$ are easy to check from \eqref{eq:1+o(1)_concentration_beta_equal_1},
so we focus on the case $\beta \neq \rho$.  By \eqref{eq:1+o(1)_concentration_beta_no_1} and Lemma
\ref{lemma:inverse_estimates} we have
\begin{multline*}
c_1 \int_0^1 \frac{M^{\rho/\beta
    -1} Y_0}{X_0^{\rho/\beta}}\Big(t^{1/d} -
t^{(d-1)/d}\Big)^{\rho/\beta-1} \wedge 1 \ dt \leq
\frac{R_{\text{fin}} - R_0}{M/d} \\ \leq \int_0^1
c_2\frac{M^{\rho/\beta -1}Y_0}{X_0^{\rho/\beta}}\Big(t^{1/d} -
t^{(d-1)/d}\Big)^{\rho/\beta-1} \wedge 1 \ dt.
\end{multline*}
The trivial inequality $R_{\text{fin}} - R_0 \leq M/d$ yields $1$
for the upper bound in
\eqref{eq:1+o(1)_concentration_up_t_const_factor}, and the constant lower bound in \eqref{eq:1+o(1)_concentration_up_t_const_factor} is obvious. 
For other  bounds in
\eqref{eq:1+o(1)_concentration_up_t_const_factor}, it suffices to show that 
\begin{equation}\label{eq:1+o(1)_concentration_integral_help}
\int_{0}^1\Big( t^{1/d} -
t^{(d-1)/d}\Big)^{\rho/\beta-1 } \ dt
\end{equation}
is bounded from above and below by strictly positive constants depending on $\rho/\beta$ and $d$ only.
This follows from the fact that the function under the integral is integrable on $(0,1)$ and bounded away from zero on the interval
 $[1/2,3/4]$.

To prove \eqref{eq:limiting_concentration} observe that the condition implies that  $MX_0^{\rho/(\beta-\rho)}Y_0^{\beta/(\rho-\beta)} \to \infty$, for $\beta >\rho$ and $MX_0^{\rho/(\beta-\rho)}Y_0^{\beta/(\rho-\beta)} \to 0$, for $\beta <\rho$, and apply Lemma \ref{lemma:inverse_estimates} and dominated convergence theorem.
\end{proof}

In the same way we get the analogous estimates on the boundary. Bounds for the size of the boundary analogous to those in \eqref{eq:1+o(1)_concentration_up_t_const_factor} are  straightforward from \eqref{eq:1+o(1)_concentration_up_t_const_factor}, as random regular graphs are expanders asymptotically almost surely, so we present the analogues of \eqref{eq:limiting_concentration}.

\begin{corollary}\label{cor:nicer_boundary_estimates}
Let the assumptions in Theorem \ref{thm:(1+o(1))_concentration_diff_rates_0} hold.
Then with probability converging to 1
\begin{equation}\label{eq:limiting_concentration_boundary}
D_{\text{fin}} = (1\pm\varepsilon)\frac{\beta+ \rho}{\beta}\frac{Y_0}{(X_0/M)^{\rho/\beta}}\int_0^1 \Big(t-t^{d-1}\Big)^{\rho/\beta}~dt.
\end{equation}
when $\beta \neq \rho$ and $\frac{(Y_0/M)^\beta}{(X_0/M)^\rho}$ converges to zero. 
\end{corollary}

\begin{proof}
Applying Dominated convergence theorem as in the proof of the second part of Corollary \ref{cor:nicer_main_estimates} and the fact that $\lim_{t \to 0}\kappa_{\beta,\rho}(t)/t = (\beta + \rho)/\rho$ we get
\[
D_{\text{fin}} = (1\pm\varepsilon)\frac{\beta+ \rho}{2\beta}\frac{Y_0}{(X_0/M)^{\rho/\beta}}\int_0^1 \Big(1-t^{d/2-1}\Big)\Big(t^{1/2}- t^{(d-1)/2}\Big)^{\rho/\beta -1}~dt.
\]
Formula \eqref{eq:limiting_concentration_boundary} is obtained by substituting the integration variable with $t^{1/2}$.
\end{proof}

Now we are ready to prove Theorem \ref{thm:main_both_large}. Proof of Theorem \ref{thm:boundary} part i) is identical and will be omitted.

\begin{proof}[Proof of Theorem \ref{thm:main_both_large}]
In the setting of Theorem \ref{thm:main_both_large} we have $(X_0,Y_0)=(d\delta_B B_0,d\delta_R R_0)$, $M/N \to d$ and $Z_0/M\to 1$. The existence of sequences $(\overline{B},\overline{R})$  is straightforward from Theorem \ref{thm:(1+o(1))_concentration_diff_rates_0}, and their estimates from  \eqref{eq:1+o(1)_concentration_up_t_const_factor}. The formulas for $\overline{B}$ and $\overline{R}$ in the case $\beta = \rho$ follow from \eqref{eq:1+o(1)_concentration_beta_equal_1} 
and the fact that $(B_0+R_0)/N  \to 0$.  The formulas in the case $\beta \neq \rho$ follow from \eqref{eq:limiting_concentration}. 
\end{proof}

To prove Theorem \ref{thm:small_sizes} we compare the process $(X_n,Y_n)$ from the coupled process CP to an urn model.
For the case of equal rates $\beta = \rho$, recall that in the P\'{o}lya urn process $(S_n,Z_n)$ with the replacement matrix $\left(\begin{array}{ll} \alpha & 0 \\ 0 &
  \alpha \end{array}\right)$ random variable $S_n/(\alpha n)$ converges in distribution to $Beta(S_0/\alpha,Z_0/\alpha)$.
If the rates are different,
we use the following
result by Svante Janson (part of Theorem 1.4 in \cite{Janson06}). 

\begin{theorem}[Janson]\label{thm:svante_theorem}
Consider the P\'{o}lya urn process $(S_n,Z_n)$ with the replacement
matrix $\left(\begin{array}{ll} \alpha & 0 \\ 0 &
  \delta \end{array}\right)$, where $\alpha>0$, $\delta > 0$, $S_0 >
0$ and $Z_0 > 0$. Let $U \sim \Gamma(S_0/\alpha,1)$ and $V \sim
\Gamma(Z_0/\delta,1)$ be two independent random variables with Gamma
distribution and parameters $S_0/\alpha$ and $Z_0/\delta$
respectively.
If $\alpha < \delta$ then in distribution
\[
\frac{S_n}{n^{\alpha/\delta}} \rightarrow \alpha \frac{U}{V^{\alpha/\delta}}.
\]
\end{theorem}

Now we prove Theorem \ref{thm:small_sizes}. Again the proof of Theorem \ref{thm:boundary} ii) is analogous and will be omitted.
\begin{proof}[Proof of Theorem \ref{thm:small_sizes}]
Consider the process $(X_{n,1},Y_{n,1})$ such that $(X_{0,1},Y_{0,1})
= (dB_0,dR_0)$ and that $(X_{n+1,1},Y_{n+1,1}) =(X_{n,1} + d-2,Y_{n,1})$ happens
with probability $\beta X_{n,1}/(\beta X_{n,1} + \rho Y_{n,1})$, and
$(X_{n+1,1},Y_{n+1,1}) =(X_{n,1},Y_{n,1}+d-2)$ otherwise. 
First we show that we can couple the processes $(X_n,Y_n)$ and
$(X_{n,1},Y_{n,1})$ so that with probability converging to $1$, as $M
\to \infty$, we have $(X_n,Y_n) = (X_{n,1},Y_{n,1})$ for all $0 \leq n
\leq M^{1/4}$. 
The construction of the
coupling is simple: we set $(X_{n+1,1},Y_{n+1,1}) =(X_{n,1} +
d-2,Y_{n,1})$ if in the step ii) of CP process the first chosen half-edge
was blue (that is in $\mathcal{X}_n$) and we set $(X_{n+1,1},Y_{n+1,1}) =(X_{n,1},Y_{n,1} + d-2)$ otherwise.
Then we have that $(X_n,Y_n) = (X_{n,1},Y_{n,1})$ if for all $k
\leq n$, in the step ii) of the $k$-th round of CP we colored a new vertex,  (in other words coupled a half-edge in $\mathcal{Z}_n$).
 The conditional probability that this does not happen is
\[
\frac{\beta X_k(X_k-1) + \rho Y_k(Y_k-1) + (\beta + \rho)X_kY_k}{(\beta X_k +\rho
  Y_k) (X_k + Y_k + Z_k -1)} \leq 2\frac{X_k + Y_k}{X_k+Y_k+Z_k-1}\leq
\frac{4(d-2)}{M^{3/4}}.
\]
Here we used the fact that $X_n + Y_n$ can increase by at most $d-2$ and the sum $X_n +Y_n +Z_n$ can decrease by at most 2.
Now the probability that in each of the first $M^{1/4}$  rounds of CP we color a new vertex is bounded from below by
\[
1 - M^{1/4} \frac{4(d-2)}{M^{3/4}},
\]
which converges to $1$ as $M \to \infty$. 
Set $n(M)$ to be the integer part of $M^{1/4}$. In the case $\beta = \rho$, beta convergence of equal rate P\'{o}lya urns 
yields that in distribution
\[
\frac{Y_{n(M)}}{Y_{n(M)} +  X_{n(M)}} \to W.
\]
Stopping the CP process at $n(M)$, and starting it again with the initial $X_{n(M)}$ and $Y_{n(M)}$, apply the Markov property and Theorem \ref{thm:(1+o(1))_concentration_diff_rates_0} to get the claim. The proof for the different rates is analogous, if one uses non-balanced urn result in Theorem \ref{thm:svante_theorem} and Remark \ref{rem:dff_rates_and_urn_model} 
to get that for $\rho < \beta$
\[
\frac{\rho Y_{n(M)}}{(X_{n(M)}/(d-2))^{\rho/\beta}} \to \rho (d-2)\frac{U}{V^{\rho/\beta}}.
\]
In both the balanced ($\beta =\rho$) and the unbalanced case ($\beta \neq \rho$) the application of Theorem \ref{thm:(1+o(1))_concentration_diff_rates_0} is justified, as with probability converging to 1, we have that both $X_{n(M)}\to \infty$ and $Y_{n(M)}\to \infty$.
\end{proof}

Theorem \ref{thm:svante_theorem} can not be directly applied in the setting of Theorem \ref{thm:mixed_sizes}.
We will need the following lemma, which holds for all values of $\beta$ and $\rho$.

\begin{lemma}\label{lemma:small_and_large_when_to_stop}
Assume that $X_0=X_0(M)$ is constant and that $Y_0=Y_0(M)$ converges to $\infty$ as $M \to \infty$, so that $Y_0/M \to 0$. Then there exists a sequence $L_M \to \infty$  such that for the stopping time $\tau$ defined as 
the first
$k$ for which $X_k \geq X_0 L_M^{\beta/\rho}$ or $Y_k \geq Y_0 L_M$,
we have the convergence in distribution
 \[
 \frac{X_\tau}{(Y_\tau/Y_0)^{\beta/\rho}} \rightarrow \Gamma\Big(\frac{X_0}{d-2},\frac{1}{d-2}\Big),
\] as $M \to \infty$.
\end{lemma}

First we show how to use the above lemma to prove Theorem \ref{thm:mixed_sizes}. Yet again the proof of Theorem \ref{thm:boundary} iii) is analogous and will be omitted.

\begin{proof}[Proof of Theorem \ref{thm:mixed_sizes}]
The proof proceeds along the same steps as the proofs of Theorems \ref{thm:main_both_large} and \ref{thm:small_sizes}. 
In particular, we stop the process at the stopping time $\tau$, restart and use strong Markov property.
Simply observe that by Lemma \ref{lemma:small_and_large_when_to_stop} that both $\beta > \rho$ and $R_0N^{\rho/\beta}/N\to 0$ imply that $\frac{(Y_\tau/M)^\beta}{(X_\tau/M)^{\rho}} \to 0$ and that either $\beta \leq \rho$ or both $\beta > \rho$ and $(N/R_0)^{\beta/\rho}/N\to 0$ imply that $\frac{(X_\tau/M)^\rho}{(Y_\tau/M)^{\beta}} \to 0$. In the former case use \eqref{eq:limiting_concentration}, and in the latter case use it's analog  
for the blue process. 
\end{proof}

To prove Lemma \ref{lemma:small_and_large_when_to_stop} we need the
following corollary of Theorem \ref{thm:svante_theorem}. 
First say that a process $(S_n,Z_n)_n$ is a P\'{o}lya urn process with a replacement matrix $A$ and with respect to a filtration $(\mathcal{F}_n)$ if the conditional probability of $(S_{n+1},Z_{n+1})$ given $\mathcal{F}_n$ agrees with that of a P\'{o}lya urn process with replacement matrix $A$.

\begin{corollary}\label{cor:svante_theorem_applied}
Consider a P\'{o}lya urn process $(S_n,Z_n)_n$ with the replacement
matrix $\left(\begin{array}{ll} \alpha & 0 \\ 0 &
  \delta \end{array}\right)$, where $\alpha>0$, $\delta > 0$ and with respect to a filtration $(\mathcal{F}_n)$.  Let
$(L_M)$, $(L_{M,S})$ and $(L_{M,Z})$  be three sequences converging to $\infty$ as $M \to \infty$, let $S_0 = S_0(M)
> 0$ be fixed and let $Z_0 = Z_0(M) \geq L_M$.  Let  $\tau$ be a (almost surely finite) stopping time
such that $\mathbb{P}(S_\tau < S_0L_{M,S}, \ Z_\tau < Z_0 L_{M,Z}) = 0$. Then
\[
Z_0^{\alpha/\delta}\frac{S_\tau}{Z_\tau^{\alpha/\delta}} \rightarrow
\Gamma(S_0/\alpha,1/\alpha),
\]
in distribution, as $M \to \infty$. 
\end{corollary}

\begin{proof}
Define $U$ and $V$ as $\Gamma(S_0/\alpha,1)$ and
$\Gamma(Z_0/\delta,1)$ distributed independent random
variables. Furthermore 
conditioned on $\mathcal{F}_\tau$ take
$U'$ and $V'$ to be $\Gamma(S_\tau/\alpha,1)$ and
$\Gamma(Z_\tau/\delta,1)$ distributed independent random
variables. Recall that the Gamma distribution $\Gamma(m,1)$ has mean
and variance equal to $m$, which by Chebyshev inequality implies that
for any $\varepsilon$ the probability that a $\Gamma(m,1)$ random
variable is in the interval $((1-\varepsilon)m, (1+\varepsilon)m)$ converges
to $1$, as $m \to \infty$. Since $Z_\tau \geq Z_0 \geq L_M$, this concentration
holds for both $V$ and $V'$ ($m= Z_0/\delta$ and $m = Z_\tau/\delta$,
respectively), implying there is a sequence $\rho_{M,\varepsilon}$
(depending only on $(L_M)$) such that $\lim_{M \to
  \infty}\rho_{M,\varepsilon} = 0$, and
\begin{equation}\label{eq:janson_applied_0}
\mathbb{P}((1-\varepsilon/2)Z_0/\delta \leq V \leq (1+\varepsilon/2)Z_0/\delta) \geq
1-\rho_{M,\varepsilon}
\end{equation}
and
\begin{equation}\label{eq:janson_applied_0.1}
 \mathbb{P}((1-\varepsilon/2)Z_\tau/\delta \leq V' \leq (1+\varepsilon/2)Z_\tau/\delta | \mathcal{F}_\tau)
 \geq 1-\rho_{M,\varepsilon}.
\end{equation}

Next we show that for any $x > 0$ and  $\varepsilon > 0$ there is a sequence
$(\lambda_{M,\varepsilon})_M$ converging to $0$ such that
\begin{equation}\label{eq:janson_applied_1}
\mathbb{P} \left( \left. \frac{U'}{V'^{\alpha/\delta}} \leq
(1+\varepsilon)\frac{x/\alpha}{(Z_0/\delta)^{\alpha/\delta}} \right|
\mathcal{F}_\tau  \right) \geq 1-
\lambda_{M,\varepsilon}, \ \text{ on the event } \left\{\frac{S_\tau}{Z_\tau^{\alpha/\delta}} \leq
\frac{x}{Z_0^{\alpha/\delta}}\right\}
\end{equation}
and
\begin{equation} \label{eq:janson_applied_1.1}
\mathbb{P}\left(\left.\frac{U'}{V'^{\alpha/\delta}} \geq
(1-\varepsilon)\frac{x/\alpha}{(Z_0/\delta)^{\alpha/\delta}} \right|
\mathcal{F}_\tau  \right) \geq 1-
\lambda_{M,\varepsilon}, \ \text{ on the event } \left\{\frac{S_\tau}{Z_\tau^{\alpha/\delta}} \geq
\frac{x}{Z_0^{\alpha/\delta}}\right\}.
\end{equation}
On the event $S_\tau \geq (S_0 L_{M,S}) \wedge L_{M,Z}^{\alpha/(2\delta)}$  \eqref{eq:janson_applied_1} and
\eqref{eq:janson_applied_1.1} follow from
\eqref{eq:janson_applied_0.1} and the analogous bound for $U'$.  If
$S_\tau < (S_0 L_{M,S}) \wedge L_{M,Z}^{\alpha/(2\delta)}$ then we have $Z_\tau \geq
Z_0L_{M,Z}$ and,  since $S_\tau Z_\tau^{-\alpha/\delta} \leq
Z_0^{-\alpha/\delta}L_{M,Z}^{-\alpha/(2\delta)}$, we only need to prove \eqref{eq:janson_applied_1}.  This then follows from
the fact that $\frac{U'}{V'^{\alpha/\delta}} >
(1+\varepsilon)\frac{x/\alpha}{(Z_0/\delta)^{\alpha/\delta}}$
 implies either 
\[V' \leq \frac{(1-\varepsilon)Z_\tau}{\delta},\  \text{ or } \ U' \geq
\frac{(1+\varepsilon)x}{\alpha}((1-\varepsilon)L_{M,Z})^{\alpha/\delta}.\]
 The first
event has probability bounded from above by $\rho_{M,\varepsilon}$ and
the second one, by Markov inequality, has probability at most
\[\frac{S_\tau}{(1+\varepsilon)x((1-\varepsilon)L_{M,Z})^{\alpha/\delta}} \leq \frac{1}{(1+\varepsilon)x((1-\varepsilon)L_{M,Z}^{1/2})^{\alpha/\delta}},\]
which converges to zero as $M \to \infty$. This proves \eqref{eq:janson_applied_1} and \eqref{eq:janson_applied_1.1}.

If $\alpha < \delta $ then, by Theorem \ref{thm:svante_theorem},
conditionally on $\mathcal{F}_\tau$ the random variable
$(n-\tau)^{-\alpha/\delta}S_n$ converges in distribution to $\alpha
U'V'^{-\alpha/\delta}$, as $n \to \infty$. In particular, by \eqref{eq:janson_applied_1} we have
\[
\liminf_{n \to \infty} \mathbb{P}\left(\left.\frac{S_n}{(n-\tau)^{\alpha/\delta}} \leq (1+\varepsilon)\frac{x}{(Z_0/\delta)^{\alpha/\delta}}\right|\mathcal{F}_\tau\right) \geq 1- \lambda_{M,\varepsilon}, \ \text{ on the event } \left\{\frac{S_\tau}{Z_\tau^{\alpha/\delta}} \leq
\frac{x}{Z_0^{\alpha/\delta}}\right\}.
\]
Integrating over this event and using Fatou's lemma one easily gets
\begin{align*}
\mathbb{P}\left(\frac{S_\tau}{Z_\tau^{\alpha/\delta}}  \leq
\frac{x}{Z_0^{\alpha/\delta}} \right) & \leq
(1-\lambda_{M,\varepsilon})^{-1}\liminf_{n \to \infty}\mathbb{P}\left(\frac{S_n}{(n-\tau)^{\alpha/\delta}}
\leq (1+\varepsilon)\frac{x}{(Z_0/\delta)^{\alpha/\delta}}\right) \\ &
  \leq
(1-\lambda_{M,\varepsilon})^{-1}\liminf_{n \to \infty}\mathbb{P}\left(\frac{S_n}{n^{\alpha/\delta}}
\leq (1+\varepsilon)\frac{x}{(Z_0/\delta)^{\alpha/\delta}}\right) 
\end{align*}
Again, by Theorem \ref{thm:svante_theorem}, the probability on the right hand side
 converges to 
\[\mathbb{P}\Big( \alpha UV^{-\alpha/\delta} \leq
(1+\varepsilon)x\big(Z_0/\delta\big)^{-\alpha/\delta}\Big) \leq
\mathbb{P}( \alpha U \leq (1+ \varepsilon)(1+\varepsilon/2)^{\alpha/\delta}x) +
\rho_{M,\varepsilon}.\] 
Taking $M \to \infty$ and then $\varepsilon$ to zero
\[
\limsup_{M \to
  \infty}\mathbb{P}\left(Z_0^{\alpha/\delta}\frac{S_\tau}{Z_\tau^{\alpha/\delta}}
\leq x\right) \leq \mathbb{P}( \alpha U \leq x),
\]
Using \eqref{eq:janson_applied_1.1} and the fact that $\tau$ is finite almost surely, we can obtain the
upper bound on
$\mathbb{P}\Big(Z_0^{\alpha/\delta}\frac{S_\tau}{Z_\tau^{\alpha/\delta}}
\geq x\Big)$ in the same way which proves the claim when $\alpha <
\delta$.

By using the same arguments as in the previous case, for $\delta <
\alpha $, conditionally on $\mathcal{F}_\tau$ the random variable
$n^{-\delta/\alpha}Z_n$ converges in distribution to $\delta
V'U'^{-\delta/\alpha}$ and thus for a fixed $M$ and $n$ large enough
we have
\[
\mathbb{P}\Big(\frac{S_\tau}{Z_\tau^{\alpha/\delta}} \leq
\frac{x}{Z_0^{\alpha/\delta}} \Big) \leq
(1-\lambda_{M,\varepsilon})^{-1}\liminf_{n \to \infty}\mathbb{P}\Big(n^{-\delta/\alpha}Z_n
\geq
\frac{Z_0}{((1+\varepsilon)x/\alpha)^{\delta/\alpha}}\Big).
\]
The probability on the right hand side above converges to
$\mathbb{P}\Big( \delta VU^{-\delta/\alpha} \geq
Z_0((1+\varepsilon)x/\alpha)^{-\delta/\alpha}\Big)$ which in turn
is bounded by $\mathbb{P}(\alpha U \leq (1+ \varepsilon)^{1+\alpha/\delta}x) +
\rho_{M,\varepsilon}$. Taking $M \to \infty$ and $\varepsilon \to 0$ proves
one bound in this case. As before the other bound is proven in the
same way.

The case $\alpha = \delta$ is handled in exactly the same way. First note that the proofs of the inequalities \eqref{eq:janson_applied_1} and \eqref{eq:janson_applied_1.1} are valid in this case as well. Use the Beta convergence in this case, and for that note that $S_\tau/Z_\tau \leq x/Z_0$
and $S_\tau/Z_\tau \geq x/Z_0$ are equivalent to $S_\tau/(S_\tau +
Z_\tau) \leq x/(x + Z_0)$ and $S_\tau/(S_\tau + Z_\tau) \geq x/(x +
Z_0)$ respectively, and that $U/(U+V)$ and $U'/(U'+V')$ are distributed as $\text{Beta}(S_0/\alpha,Z_0/\alpha)$ and $\text{Beta}(S_\tau/\alpha,Z_\tau/\alpha)$ respectively.
\end{proof}

Now we prove Lemma \ref{lemma:small_and_large_when_to_stop}.

\begin{proof}[Proof of Lemma \ref{lemma:small_and_large_when_to_stop}]

Denote by $\sigma_1$ the first time $k$ that $X_{k} < X_{k-1}$ and for $\varepsilon>0$ by $\sigma_{2,\varepsilon}$ the first $k$ such that there are at least $\varepsilon Y_0/d$ indices $\ell < k$ for which either $X_{\ell+1} \neq X_\ell$ or $Y_{\ell+1} < Y_\ell$. In the first step we claim that we can choose the sequence $L_M$ so that 
\begin{equation}\label{eq:choice for bound on stopping time}
\lim_{M \to \infty} \mathbb{P}(\sigma_1 \wedge \sigma_{2,\varepsilon} \leq \tau) = 0,
\end{equation}
holds for all $\varepsilon > 0$. 

Denote by $k_0$ the integer part of $2X_0L_M^{\beta/\rho}+2Y_0L_M$. By Theorem \ref{thm: urn applied to model} we have that $\mathbb{P}(\tau < k_0)$ converge to zero. Thus the claim will be proven if we find  a sequence $(L_M)$ such that $\mathbb{P}(\sigma_1 \leq \tau \wedge k_0)$ and $\mathbb{P}(\sigma_{2,\varepsilon} \leq \tau \wedge k_0)$ both converge to zero for any $\varepsilon > 0$.

First we show that we choose $L_M$ to grow slow enough so that $\mathbb{P}(\sigma_1 \leq \tau \wedge k_0)$ converges to zero.
Given $k<\tau \wedge k_0$ the conditional probability that
$X_{k+1} <X_k$ is equal to
\[
\frac{\beta X_k(X_k -1) + (\rho+\beta)X_kY_k}{(\beta X_k + \rho Y_k)(X_k + Y_k
  +Z_k -1)} \leq \frac{(1+\beta/\rho)X_k}{X_k + Y_k +Z_k -1} \leq
(1+\beta/\rho)\frac{X_0 L_M^{\beta/\rho}}{M-2k_0}.
\]
Therefore, the expected number of indices $k < \tau \wedge k_0$ such that $X_{k+1} <X_k$ is bounded from above by 
\[
k_0(1+\beta/\rho)\frac{X_0 L_M^{\beta/\rho}}{M-2k_0},
\]
(see Remark \ref{rem:nesting}) and it is a simple observation that we can choose $L_M$ to grow slow enough so that the above expression converges to $0$.

Define $\sigma_3$ to be the first time $k$ such that $Y_k \leq Y_0/2$ and observe that since until the time $\sigma_{2.\varepsilon}$ there are at most $\varepsilon Y_0 /d$ indices $k < \tau \wedge k_0$ such that $Y_{k+1} < Y_k$, so for $\varepsilon$ small enough this will force $\sigma_3 > \sigma_{2,\varepsilon}$.
Therefore, for $\varepsilon$ small the event $\{\sigma_{2,\varepsilon} \leq \tau \wedge k_0\}$ is the same as $\{\sigma_{2,\varepsilon} \leq \tau \wedge k_0 \wedge \sigma_3\}$.

Given $k<\tau \wedge k_0\wedge \sigma_3$ the conditional probability that $X_{k+1} \neq X_k$ or $Y_{k+1} < Y_k$ is 
\[
\frac{\beta X_k}{\beta X_k + \rho Y_k} + \frac{\rho Y_k}{\beta X_k +\rho Y_k} \frac{X_k+Y_k}{X_k+Y_k+Z_k-1} \leq \frac{(\beta+ \rho )X_k}{\beta X_k + \rho Y_k} + \frac{Y_k}{X_k+Y_k+Z_k-1}.
\]
The right hand side can then be bounded from above by
\[
(1+\beta/\rho)\frac{X_0L_M^{\beta/\rho}}{Y_0} + \frac{Y_0L_M}{M-2k_0-1},
\]
and so the expected number of indices $k$ with the above property is no more than
\[
(1+\beta/\rho)k_0\frac{X_0L_M^{\beta/\rho}}{Y_0} + k_0\frac{Y_0L_M}{M-2k_0-1}.
\]
By Markov inequality the probability that there are more than $\varepsilon Y_0/d$ of such indices is at most
\[
(1+\beta/\rho)\frac{dX_0(X_0+Y_0)L_M^{1+2\beta/\rho}}{\varepsilon Y_0^2} + \frac{d(X_0+Y_0)L_M^{2+\beta/\rho}}{\varepsilon(M-2L_M(X_0+Y_0)-1)},
\]
which converges to zero for any $\varepsilon > 0$ if $L_M$  is chosen to grow slow enough. This proves that claim that one can choose $L_M$ so that  \eqref{eq:choice for bound on stopping time} holds.

Now fix $\varepsilon > 0$.
Now consider two processes $(X_{n,1},Y_{n,1})$ and $(X_{n,2},Y_{n,2})$ such that $(X_{0,1},Y_{0,1}) = (X_0,Y_0)$ and $(X_{0,2},Y_{0,2}) = (X_0,(1-\varepsilon)Y_0)$, and $(X_{n+1,i},Y_{n+1,i}) = (X_{n,i}+d-2,Y_{n,i})$ with the probability $\beta X_{n,i}/(\beta
X_{n,i} + \rho Y_{n,i})$ and $(X_{n+1,i},Y_{n+1,i}) = (X_{n,i},Y_{n,i}+d-2)$ otherwise, for $i=1,2$. We will show that there is a coupling of $(X_{n,1},Y_{n,1})$ and $(X_n,Y_n)$ such that $X_{k,1} \leq X_k$ and $Y_k \leq Y_{k,1}$ for all $k < \sigma_1 \wedge \sigma_{2,\varepsilon}$ and a coupling of of $(X_{n,2},Y_{n,2})$ and $(X_n,Y_n)$ such that $X_{k} \leq X_{k,2}$ and $Y_{k,2} \leq Y_{k}$ also for all $k < \sigma_1 \wedge \sigma_{2,\varepsilon}$. 

To prove the existence of the couplings we proceed inductively. Clearly at $k=0$ the desired inequalities are satisfied. 
We will show that if the inequalities $X_{k,1} \leq X_k$ and  $Y_k \leq Y_{k,1}$ are satisfied and if $k+1 < \sigma_1 \wedge \sigma_{2,\varepsilon}$ then we can couple the next step of the processes so that $X_{k+1,1} \leq X_{k+1}$ and  $Y_{k+1} \leq Y_{k+1,1}$. Assuming the inequalities hold at time $k$ we can surely couple the processes so that $X_{k+1,1} = X_{k,1}+d-2$ implies that a blue half-edge is chosen in the first step of the transition from $(X_k,Y_k)$ to $(X_{k+1},Y_{k+1})$. Therefore, if $X_{k+1,1} = X_{k,1}+d-2$ and  $k+1 < \sigma_1$ we have both $X_{k+1} = X_k + d-2$ and $Y_{k+1} = Y_k$ and the inequalities $X_{k+1,1} \leq X_{k+1}$ and $Y_{k+1} \leq Y_{k+1,1}$ are obvious. Furthermore, if $X_{k+1,1} = X_{k,1}$ the inequalities $X_{k+1,1} \leq X_{k+1}$ and  $Y_{k+1} \leq Y_{k+1,1}$ hold since for $k+1 < \sigma_1$ we have $X_{k+1} \geq X_k$.

For the other coupling observe that the value of $Y_k-Y_{k,2}$ can decrease by at most $d$, and that the decrease happens only when either $X_{k+1} \neq X_k$ or $Y_{k+1} < Y_k$ (because otherwise $Y_{k+1} = Y_k +d-2$). Since, the number of such $k$'s up to time $\sigma_1\wedge \sigma_{2,\varepsilon}$ is no more than $\varepsilon Y_0/d$ and $Y_0-Y_{0,2} = \varepsilon Y_0$, any coupling will satisfy $Y_{k,2} \leq Y_k$ for all $k \leq \sigma_1 \wedge \sigma_{2,\varepsilon}$. Thus it suffices to construct the coupling so that $X_{k+1} = X_k + d-2$ implies $X_{k+1,2} = X_{k,2} + d-2$. This is shown using the same inductive argument as before. 

To end the proof define $\tau_1$ to be $\tau$ if $Y_\tau \geq Y_0 L_M^{1/2}$ and otherwise the smallest index $k\geq \tau$ such that $X_{k,1} \geq X_0 L_M^{\beta/(2\rho)}$. In the former case we have $Y_{\tau_1,1} \geq Y_{0,1} L_M^{1/2}$ and in the latter $X_{\tau_1,1} \geq X_{0,1} L_M^{\beta/(2\rho)}$. As $\tau_1$ is clearly a stopping time with respect to the filtration $\mathcal{F}_n$, by Corollary \ref{cor:svante_theorem_applied} we have that $\frac{X_{\tau_1,1}}{(Y_{\tau_1,1}/Y_{0,1})^{\beta/\rho}}$ converges to $(d-2)\Gamma(X_0/(d-2),1)$.
For a fixed $t$ and $M$ large enough the event,
\[
\left\{\frac{X_\tau}{(Y_\tau/Y_0)^{\beta/\rho}}\leq t\right\}
\]
implies that $Y_\tau \geq Y_0\sqrt{L_M}$ (otherwise we have $X_\tau \geq X_0L_M^{\beta/\rho}$ and $\frac{X_\tau}{(Y_\tau/Y_0)^{\beta/\rho}} \geq X_0 L_M^{\beta /(2\rho)}$), and thus $\tau_1 = \tau$. Combined with the inequalities from the coupling the above and the estimate \eqref{eq:choice for bound on stopping time}, we have 
\[
\mathbb{P}\Big(\frac{X_\tau}{(Y_\tau/Y_0)^{\beta/\rho}}\leq t\Big) \leq \mathbb{P}\Big(\frac{X_{\tau_1,1}}{(Y_{\tau_1,1}/Y_{0,1})^{\beta/\rho}}\leq t\Big) + \xi_M,
\]
where $\lim_M \xi_M=0$.
This gives the upper bound on the probabilities
\[
\limsup_{M \to \infty}\mathbb{P}\Big(\frac{X_\tau}{(Y_\tau/Y_0)^{\beta/\rho}}\leq t\Big) \leq \mathbb{P} \Big((d-2)\Gamma(X_0/(d-2),1) \leq t\Big).
\]
The lower bounds are obtained analogously, using the coupling with the process $(X_{n,2},Y_{n,2})$, defining the stopping time $\tau_2$ to be $\tau$ if $X_\tau \geq X_0 L_M^{\beta /(2\rho)}$ and otherwise the smallest $k \geq \tau$ such that $Y_{k,2} \geq Y_{k,0}\sqrt{L_M}$ and  observing that $\varepsilon > 0$ is arbitrary.

\end{proof}

\section{Proof of the main estimates}
This section contains the proofs of Theorems \ref{thm:(1+o(1))_concentration_diff_rates_0} and \ref{thm: 1+o(1) concentration for boundary}. As the proofs are quite technical, in order simplify formulas we will assume that $\rho = 1$. This is always possible, as the both the law of $(X_n,Y_n)$ and the formulas in these theorems are invariant under scaling of $(\beta,\rho)$, so we simply need to scale the rates from $(\beta,\rho)$ to $(\beta/\rho,1)$. The assumption $\rho =1$ will be used throughout the whole section.
Assuming $\rho=1$ formula \eqref{eq:1+o(1)_concentration_beta_no_1} reduces to
\begin{equation}\label{eq:1+o(1)_concentration_beta_no_1_one_parameter}
R_{\text{fin}} - R_0 = (1 \pm \varepsilon) \frac{M}{d}\int_0^1
\phi_{\beta}^{-1}
\left(MX_0^{\frac{1}{\beta-1}}Y_0^{\frac{\beta}{1-\beta}}
\Big(t^{1/d} -
t^{(d-1)/d}\Big)\right)dt, 
\end{equation}
and \eqref{eq:(1+o(1))_concentration_boundary_diff_rates} to
\begin{equation}\label{eq:(1+o(1))_concentration_boundary_diff_rates_one_parameter}
D_{\text{fin}} = (1\pm\varepsilon) \frac{M}{2}\int_0^1\Big(1-t^{d/2-1}\Big) \kappa_{\beta} \circ \phi^{-1}_{\beta} 
\left(MX_0^{\frac{1}{\beta-1}}Y_0^{\frac{\beta}{1-\beta}}\Big(t^{1/2} -
t^{(d-1)/2}\Big)\right)dt, \ \text{ for } \beta \neq \rho,
\end{equation}
where functions reduce to $\kappa_{\beta}(t) = \frac{(\beta + 1)t(1-t)}{\beta t + (1-t)}$ and 
\[
\phi_{\beta} (s)= \Big(\frac{1-s}{\beta s}\Big)^{\frac{1}{\beta-1}} +  \Big(\frac{1-s}{\beta s}\Big)^{\frac{\beta}{\beta-1}}.
\]

The proofs  are based on a martingale method. In short we will identify two observables in our model that will ``behave like martingales''. More precisely, we will be able to effectively bound the conditional first and second moments of the step sizes in each process. 

First we present a general lemma bounding the
conditional expectation and variance of the differences in a general
random process.
\begin{lemma}\label{lemma: estimates for martingales}
Let $(K_n)_{n \geq 0}$ be a positive process such that $K_0$ is a
constant, and $p_n$ and $r_n$ positive real numbers defined for $n
\geq 0$, such that
\[
|\mathbb{E}(K_{n+1}-K_n|\mathcal{F}_n)| \leq p_n K_n, \ \text{and}
\ \ \mathbb{E}((K_{n+1}-K_n)^2|\mathcal{F}_n) \leq r_n K_n.
\]
Consider the process $I_0=K_0$, $I_n = K_n -
\sum_{k=0}^{n-1}\mathbb{E}(K_{k+1}-K_{k}|\mathcal{F}_k)$. Then process
$I_n$ is a martingale and for every positive integer $n$ we have
\[
|K_n - I_n| \leq \sum_{k=0}^{n-1}p_kq_{k+1,n-1}I_k, \ \text{and}
\ \ \mathbb{E}((I_{n}-I_0)^2) \leq K_0\sum_{k=0}^{n-1}r_kq_{0,k-1},
\]
where $q_{\ell,k}=\prod_{i=\ell}^k (1+p_i)$ for $\ell \leq k$ and
$q_{k,k-1}=1$, for all $k \geq 0$.
\end{lemma}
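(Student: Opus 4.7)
The lemma has three separate claims: that $I_n$ is a martingale, a pathwise bound on $|K_n - I_n|$, and a second-moment bound on $\mathbb{E}[(I_n - I_0)^2]$. The martingale property follows directly from the definition: $I_{n+1} - I_n = (K_{n+1}-K_n) - \mathbb{E}[K_{n+1}-K_n \mid \mathcal{F}_n]$ is the compensated increment, whose conditional mean is zero by construction.

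For the pathwise bound I would proceed by strong induction on $n$. Starting from the identity $K_n - I_n = \sum_{k=0}^{n-1} \mathbb{E}[K_{k+1}-K_k \mid \mathcal{F}_k]$ and the drift hypothesis, $|K_n - I_n| \leq \sum_{k=0}^{n-1} p_k K_k$. Writing $K_k \leq I_k + |K_k - I_k|$ and applying the induction hypothesis at each $k < n$ gives
\[
|K_n - I_n| \;\leq\; \sum_{k=0}^{n-1} p_k I_k \;+\; \sum_{k=0}^{n-1} p_k \sum_{j=0}^{k-1} p_j \, q_{j+1,k-1}\, I_j .
\]
Collecting terms, the coefficient of each $I_j$ on the right-hand side is $p_j \bigl(1 + \sum_{\ell=j+1}^{n-1} p_\ell\, q_{j+1,\ell-1}\bigr)$. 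The key algebraic observation is the telescoping identity
\[
q_{j+1,n-1} \;=\; 1 + \sum_{\ell=j+1}^{n-1} p_\ell\, q_{j+1,\ell-1},
\]
which one verifies by expanding $\prod_{i=j+1}^{n-1}(1+p_i)$ one factor at a time (equivalently, $q_{j+1,\ell} - q_{j+1,\ell-1} = p_\ell q_{j+1,\ell-1}$ telescopes). This collapses the coefficient of $I_j$ to exactly $p_j\, q_{j+1,n-1}$ and closes the induction.

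For the variance bound, I would invoke orthogonality of martingale differences: $\mathbb{E}[(I_n - I_0)^2] = \sum_{k=0}^{n-1} \mathbb{E}[(I_{k+1}-I_k)^2]$. Since $I_{k+1}-I_k$ is the centering of $K_{k+1}-K_k$,
\[
\mathbb{E}\bigl[(I_{k+1}-I_k)^2 \mid \mathcal{F}_k\bigr] \;=\; \operatorname{Var}(K_{k+1}-K_k \mid \mathcal{F}_k) \;\leq\; \mathbb{E}\bigl[(K_{k+1}-K_k)^2 \mid \mathcal{F}_k\bigr] \;\leq\; r_k K_k.
\]
A separate, easy induction on the drift hypothesis gives $\mathbb{E}[K_{k+1}] \leq (1+p_k)\mathbb{E}[K_k]$, hence $\mathbb{E}[K_k] \leq K_0\, q_{0,k-1}$; taking expectations and summing then yields the claimed bound.

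The only genuinely non-trivial step in the whole lemma is the telescoping identity that makes the coefficient of $I_j$ in the pathwise bound come out exactly $p_j q_{j+1,n-1}$ with no slack — without that coincidence the induction would inflate the constants at each step. Once that identity is isolated, everything else is a routine application of the drift and variance hypotheses.
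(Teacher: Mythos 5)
Your proposal is correct and follows essentially the same route as the paper: both arguments reduce the pathwise bound to the telescoping identity $q_{k+1,n-1}-1=\sum_{\ell=k+1}^{n-1}p_\ell q_{k+1,\ell-1}$ applied inside an induction on the crude bound $|K_n-I_n|\leq\sum_{k<n}p_kK_k$, and both obtain the variance bound from orthogonality of martingale increments together with $\mathbb{E}(K_k)\leq q_{0,k-1}K_0$. The only difference is organizational: the paper first derives the one-sided inequality $K_n\leq I_n+\sum_k p_kq_{k+1,n-1}I_k$ via an explicit recursion for the coefficients and then substitutes it back, whereas you run a direct strong induction on the absolute value.
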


\begin{proof}
It is trivial to check that the process $I_n$ is a martingale.
Furthermore it can be shown by induction that for every $k \leq n$
\begin{equation}\label{eq:property of coefficients}
q_{k,n}-1 = \sum_{\ell=k}^n p_\ell q_{\ell+1,n} = \sum_{\ell=k}^n
p_\ell q_{k,\ell-1}.
\end{equation}
Using the first inequality in the statement we have that
 \begin{equation}\label{eq: difference between A and B 0}
 |K_n-I_n| \leq \sum_{k=0}^{n-1}p_k K_k.
 \end{equation}
In particular we have
\begin{equation}\label{eq: difference between A and B}
K_n - I_n \leq \sum_{k=0}^{n-1}p_k K_k = \sum_{k=0}^{n-1}p_k(K_k-I_k)
+ \sum_{k=0}^{n-1}p_kI_k.
\end{equation}
Using \eqref{eq: difference between A and B} inductively we can show
that $K_n - I_n \leq \sum_{k=0}^{n-1}a_{n,k}I_k$ whenever the sequence
$(a_{n,k})_{0 \leq k < n}$ satisfies $a_{n,n-1}=p_{n-1}$ and $a_{n,k}
= \sum_{\ell=k+1}^{n-1}p_\ell a_{\ell,k}+p_k$. Using
\eqref{eq:property of coefficients} it is easy to check that
$a_{n,k}=p_kq_{k+1,n-1}$ satisfies these conditions. Thus we have
 \begin{equation}\label{eq: difference between A and B 2}
 K_n \leq I_n + \sum_{k=0}^{n-1}p_kq_{k+1,n-1}I_k.
 \end{equation}
Plugging this back into \eqref{eq: difference between A and B 0} and
using \eqref{eq:property of coefficients} we get
\[
|K_{n}- I_{n}| \leq p_{n-1}I_{n-1} + \sum_{k=0}^{n-2}
p_k\Big(1+\sum_{\ell=k+1}^{n-1}p_\ell q_{k+1,\ell-1}\Big)I_{k} =
\sum_{k=0}^{n-1}p_kq_{k+1,n-1}I_k,
\]
which proves the first claim.

Note that \eqref{eq: difference between A and B 2} and
\eqref{eq:property of coefficients} imply that
\begin{equation}\label{eq: expectation of A}
\mathbb{E}(K_n) \leq \Big(1 + \sum_{k=0}^{n-1}p_kq_{k+1,n-1}\Big)I_0 =
q_{0,n-1}K_0.
\end{equation}
Thus the condition in the statement implies that
\[
\mathbb{E}((K_{n+1} - K_{n})^2) \leq r_n\mathbb{E}(K_{n}) \leq r_n
q_{0,n-1}K_0.
\]
It is easy to check that $\mathbb{E}((I_{n+1} - I_{n})^2|
\mathcal{F}_n) \leq \mathbb{E}((K_{n+1} - K_{n})^2|\mathcal{F}_n)$
which then yields
\[
\mathbb{E}((I_{n}-I_0)^2) = \sum_{k=0}^{n-1} \mathbb{E}((I_{k+1} -
I_{k})^2) \leq K_0\sum_{k=0}^{n-1}r_kq_{0,k-1}.
\]
This concludes the proof.
\end{proof}

As we mentioned before, the key to the proof of Theorems \ref{thm:(1+o(1))_concentration_diff_rates_0} and \ref{thm: 1+o(1) concentration for boundary} is to identify two processes for which we can estimate the conditional first and second moments or their step sizes. Then martingale methods (including the above lemma) will enable us to bound the maximal displacements of the processes throughout the whole relevant time regime. These processes are
\begin{equation}
\label{definition_of_K_n}
K_n = \frac{X_n}{Y_n^{\beta} (1 - 2n/M)^{(1-\beta)/2}}~.
\end{equation}
and
\[
L_{n}= \frac{X_n+Y_n}{(M-2n)-Z_0(1-2n/M)^{d/2}},
\]
Note that once we managed to bound the values of processes we can ``solve for $X_n$ and $Y_n$'' to estimate their values. The processes $K_n$ and $L_n$ tell us how $X_n+Y_n$ and $X_nY_n^{-\beta}$ behave. It is actually quite natural to study these processes. The process $X_n+Y_n$ corresponds to the pure configuration model, that is to erasing the colors of half-edges, and is equivalent to an urn model (see Lemma \ref{lemma:urn_in_configuration}). The motivation for the considering the process $X_nY_n^{-\beta}$  can be given as follows.
After removing the interaction and self-interaction of the colored half-edges, the whole model reduces to an (unbalanced) urn model with a diagonal replacement matrix. Consider the Poissonized version of that model and define continuous time processes $X_t$ and $Y_t$ as the number of balls of each color. Then the process  $X_tY_t^{-\beta}$ is a continuous time martingale \cite{Janson06}. The factor $(1 - 2n/M)^{-(1-\beta)/2}$ thus accounts for the interaction and self-interaction of colors. From this discussion it follows for $\beta<1$, that the value of the process $X_nY_n^{-\beta}$ is smaller in our model than in the model with interactions and self-interactions removed. Thus, compared to the dynamics on disjoint $d$-regular trees, 
 interactions and self-interactions on the random $d$-regular graph give the process with a faster rate $X_n$  an additional ``boost'' relative to $Y_n$. 

Estimates for the process $L_n$ are given in Theorem \ref{thm: urn applied to model}.
Process $K_n$ will be estimated in this section. First we estimate its steps to be able to apply Lemma \ref{lemma: estimates for martingales}.

\begin{lemma}\label{lemma:estimates for diff rates process}
For the process $K_n$ as defined in \eqref{definition_of_K_n}, there
exists a constant $C=C(\beta,d)>0$, such that for
all integers $n$, on the event that $Y_n \geq 2d$ we have both
\begin{equation}\label{eq: estimates for diff rates process_1}
|\mathbb{E}(K_{n+1}-K_n | \mathcal{F}_n)| \leq
\frac{CK_n}{Y_n(X_n+Y_n)},
\end{equation}
and
\begin{equation}\label{eq: estimates for diff rates process_2}
\mathbb{E}((K_{n+1}-K_n)^2|\mathcal{F}_n) \leq
\frac{CK_n}{Y_n^{1+\beta}(1-2n/M)^{(1-\beta)/2}}.
\end{equation}
\end{lemma}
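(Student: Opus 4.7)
My plan is to expand $K_{n+1}/K_n$ around the current state and show that the stochastic drift of $X_n/Y_n^{\beta}$ is balanced, up to second order, by the deterministic factor $(1-2n/M)^{(\beta-1)/2}$. With $S_n := M-2n$ and $\rho_n := (1-2/S_n)^{(\beta-1)/2}-1 = -(\beta-1)/S_n + O(1/S_n^2)$, the identity
\[
\frac{K_{n+1}}{K_n} - 1 \;=\; (1+\rho_n)\Big(1 + \tfrac{\Delta X_n}{X_n}\Big)\Big(1 + \tfrac{\Delta Y_n}{Y_n}\Big)^{-\beta} - 1
\]
holds exactly. The first factor is linear in $\Delta X_n$ and needs no expansion; the hypothesis $Y_n \geq 2d$ forces $|\Delta Y_n/Y_n| \leq 1/2$, legitimising a second-order Taylor expansion of $(1+\Delta Y_n/Y_n)^{-\beta}$ with a bounded cubic remainder.

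Next I tabulate the six possible one-step transitions, indexed by the colour of $e_n$ (blue with weight $\beta X_n$, red with weight $Y_n$) and of the matched half-edge $g_n$ (blue, red or uncoloured, uniform over the remaining $S_n-1$ half-edges). A direct enumeration yields closed-form rational expressions with common denominator $(\beta X_n+Y_n)(S_n-1)$ for $\mathbb{E}(\Delta X_n\mid\mathcal{F}_n)$, $\mathbb{E}(\Delta Y_n\mid\mathcal{F}_n)$, $\mathbb{E}((\Delta X_n)^2\mid\mathcal{F}_n)$, $\mathbb{E}((\Delta Y_n)^2\mid\mathcal{F}_n)$ and $\mathbb{E}(\Delta X_n\Delta Y_n\mid\mathcal{F}_n)$. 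The key algebraic identity, on which the choice of the exponent $(\beta-1)/2$ in the definition of $K_n$ has been tuned, is
\[
\frac{\mathbb{E}(\Delta X_n\mid\mathcal{F}_n)}{X_n} \;-\; \beta\,\frac{\mathbb{E}(\Delta Y_n\mid\mathcal{F}_n)}{Y_n} \;=\; \frac{\beta-1}{S_n-1},
\]
which cancels $\rho_n$ up to an $O(1/S_n^2)$ remainder. Verifying this exact cancellation from the six case contributions is the main bookkeeping step of the argument.

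Granted the cancellation, \eqref{eq: estimates for diff rates process_1} follows by bounding the three surviving contributions to $\mathbb{E}(\Delta K_n/K_n\mid\mathcal{F}_n)$: the $O(1/S_n^2)$ mismatch, the cross term $\beta\,\mathbb{E}(\Delta X_n\Delta Y_n)/(X_nY_n) = \beta(\beta+1)/((\beta X_n+Y_n)(S_n-1))$, and $\tfrac{\beta(\beta+1)}{2}\mathbb{E}((\Delta Y_n)^2)/Y_n^2$. Using $S_n \geq Y_n$, $S_n \geq X_n+Y_n$ and $Z_n \leq S_n$, each of these is $O(1/(Y_n(\beta X_n+Y_n)))$, and $\beta X_n+Y_n \geq \min(1,\beta)(X_n+Y_n)$ converts this to the stated $O(K_n/(Y_n(X_n+Y_n)))$. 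For the variance \eqref{eq: estimates for diff rates process_2} no cancellation is needed: since $|\Delta Y_n/Y_n| \leq 1/2$, a direct bound on the partial derivatives of $(1+\rho_n)(1+a)(1+b)^{-\beta}-1$ in a neighborhood of the origin gives $(\Delta K_n/K_n)^2 \leq C[(\Delta X_n/X_n)^2 + (\Delta Y_n/Y_n)^2 + \rho_n^2]$; the explicit formulas give $\mathbb{E}((\Delta X_n)^2)/X_n^2 \leq C/(X_n(\beta X_n+Y_n))$, $\mathbb{E}((\Delta Y_n)^2)/Y_n^2 \leq C/(Y_n(\beta X_n+Y_n))$, and $\rho_n^2 \leq C/(X_nY_n)$ via $S_n^2 \geq 4X_nY_n$. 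Multiplying by $K_n^2$ and substituting $K_n = X_nY_n^{-\beta}(1-2n/M)^{(\beta-1)/2}$ shows that all three terms collapse to the claimed $CK_n/(Y_n^{1+\beta}(1-2n/M)^{(1-\beta)/2})$ once the ratios $X_n/(\beta X_n+Y_n)$ and $Y_n/(\beta X_n+Y_n)$ are absorbed into the constant $C$.
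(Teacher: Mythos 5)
Your proposal is correct and follows essentially the same route as the paper: the same five-case enumeration of one-step transitions, a second-order Taylor expansion in $\Delta Y_n/Y_n$ justified by $Y_n\geq 2d$, an exact algebraic cancellation of the first-order drift against the deterministic factor $(1-2n/M)^{(1-\beta)/2}$, and the reduction of the variance bound to $\mathbb{E}((K_{n+1}-K_n)^2\mid\mathcal{F}_n)\leq CK_n^2/(X_nY_n)$. The only difference is bookkeeping: you organize the computation multiplicatively and isolate the identity $\mathbb{E}(\Delta X_n\mid\mathcal{F}_n)/X_n-\beta\,\mathbb{E}(\Delta Y_n\mid\mathcal{F}_n)/Y_n=(\beta-1)/(S_n-1)$ (which checks out, and which the paper verifies implicitly by clearing denominators and showing the residual collapses to $\beta(\beta+1)X_nY_n^{-\beta}$).
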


\begin{proof}
Throughout the proof we assume that $M-2n \geq Y_n \geq 2d$.  To prove
\eqref{eq: estimates for diff rates process_1} we calculate
\begin{multline}\label{eq:diff_rates_aux_1}
\Big(1-\frac{2n+2}{M}\Big)^{(1-\beta)/2}(\beta
X_n+Y_n)(M-2n-1)\mathbb{E}(K_{n+1} | \mathcal{F}_n) =
\\ \frac{X_n+d-2}{Y_n^\beta}\beta X_n(M-2n-X_n - Y_n) +
\frac{X_n}{(Y_n+d-2)^\beta}Y_n(M-2n-X_n-Y_n) \\ +
\frac{X_n-2}{Y_n^\beta} \beta X_n (X_n-1) +
\frac{X_n-1}{(Y_n-1)^\beta} (1+\beta) X_n Y_n +
\frac{X_n}{(Y_n-2)^\beta} Y_n (Y_n-1).
\end{multline}
It can be easily verified that
\begin{multline}\label{eq:diff_rates_aux_2}
\Big(1-\frac{2n+2}{M}\Big)^{(1-\beta)/2}(\beta X_n+Y_n)(M-2n-1)K_n\\ =
\frac{X_n}{Y_n^\beta}(\beta X_n+Y_n)\Big(1-
\frac{2}{M-2n}\Big)^{(1-\beta)/2}(M-2n-1)\\ =
\frac{X_n}{Y_n^\beta}(\beta X_n+Y_n)(M-2n- 2+\beta + O((M-2n)^{-1})),
\end{multline}
where the absolute value of the term $O((M-2n)^{-1})$ is bounded by a
 constant multiple of $(M-2n)^{-1}$. To prove \eqref{eq: estimates
  for diff rates process_1} it is enough to show that the absolute
value of the difference of the terms in \eqref{eq:diff_rates_aux_1}
and \eqref{eq:diff_rates_aux_2} is bounded by
\begin{equation}\label{eq:diff_rates_aux_3}
 \frac{CX_n(M-2n)}{Y_n^{\beta+1}}
\end{equation}
for some constant $C$. First note that, since $X_n + Y_n\leq M-2n$,
the expression 
\[
X_nY_n^{-\beta}(\beta X_n+Y_n)(M-2n)^{-1}
\]
is bounded by \eqref{eq:diff_rates_aux_3}, for some $C>0$. Thus we can
disregard the term $O((M-2n)^{-1})$ in \eqref{eq:diff_rates_aux_2}.

By Taylor expansion we know that for any compact interval containing
$1$ there is a constant $C_1$ such that for all $t$ in this interval
\[
\Big|1- \beta t - \frac{1}{(1+t)^\beta}\Big| \leq \frac{C_1t^2}{
  (1+t)^\beta}
\]
(actually by a slightly more careful argument one can argue that $C_1$
does not depend on the interval).  Now fix any $k\geq  -2$ and choose
$t=kY_n^{-1}$ and a constant $C_1$ to obtain
\[
\left|\frac{1}{Y_n^\beta}\Big(1- \frac{k\beta}{Y_n}\Big) -
\frac{1}{(Y_n+k)^\beta} \right| \leq \frac{C_1k^2}{(Y_n+k)^\beta
  Y_n^2}.
\]
For $k=d-2$ this in particular implies that
\[
\left|X_nY_n^{1-\beta}(M-2n-X_n-Y_n)\Big(1-
\frac{\beta(d-2)}{Y_n}\Big) -
\frac{X_nY_n(M-2n-X_n-Y_n)}{(Y_n+d-2)^\beta}\right|\] is at most
\[
\frac{C_2X_n(M-2n)}{Y_n^{\beta+1}},
\]
for a constant $C_2 = (d-2)^2C_1$. Therefore we can replace the term
$\frac{X_n}{(Y_n+d-2)^\beta}Y_n(M-2n-X_n-Y_n)$ on the right hand side
of \eqref{eq:diff_rates_aux_1} by $X_nY_n^{-\beta}(M-2n-X_n-Y_n)(Y_n-
\beta(d-2))$. Arguing similarly we see that we can replace the terms
$\frac{X_n-1}{(Y_n-1)^\beta} (1+\beta) X_n Y_n$ and
$\frac{X_n}{(Y_n-2)^\beta} Y_n (Y_n-1)$ on the right hand side of
\eqref{eq:diff_rates_aux_1} by $ \frac{X_n-1}{Y_n^\beta} (1+\beta) X_n
(Y_n + \beta)$ and $\frac{X_n}{Y_n^{\beta}} (Y_n-1)(Y_n +
2\beta)$ respectively. Therefore it is enough to prove
\begin{multline*}
(M-2n-X_n - Y_n) \Big(\frac{X_n+d-2}{Y_n^\beta}\beta X_n +
  \frac{X_n}{Y_n^\beta}(Y_n - \beta (d-2)) -
  \frac{X_n}{Y_n^\beta}(\beta X_n+Y_n)\Big)\\ +
  \frac{X_n-2}{Y_n^\beta} \beta X_n (X_n-1) + \frac{X_n-1}{Y_n^\beta}
  (1+\beta) X_n (Y_n + \beta) + \frac{X_n}{Y_n^{\beta}} (Y_n-1)(Y_n +
  2\beta) \\ - \frac{X_n}{Y_n^\beta}(\beta X_n+Y_n)(X_n + Y_n -
  2+\beta) \leq \frac{CX_n(M-2n)}{Y_n^{\beta+1}},
\end{multline*}
for a large enough constant $C$.
Expanding the expressions in the left hand side above we see that it
is equal to $\beta(\beta+1)X_nY_n^{-\beta}$. This proves the claim.

Now we prove \eqref{eq: estimates for diff rates process_2}. First
note that it is enough to prove that
\begin{equation}\label{eq: estimates for diff rates process_2_1}
\mathbb{E}((K_{n+1}-K_n)^2|\mathcal{F}_n) \leq \frac{CK_n^2}{X_nY_n}.
\end{equation}
Analyzing all the cases we see that the value of $|K_{n+1} - K_n|$ is
\[
\left|\frac{X_n+d-2}{Y_n^\beta(1-\frac{2n+2}{M})^{(1-\beta)/2}}
-\frac{X_n}{Y_n^\beta(1-2n/M)^{(1-\beta)/2}}\right| \leq C_1K_n
\Big(\frac{1}{X_n} + \frac{1}{M-2n}\Big),
\]
\[
\left|\frac{X_n}{(Y_n+d-2)^\beta(1-\frac{2n+2}{M})^{(1-\beta)/2}}
-\frac{X_n}{Y_n^\beta(1-2n/M)^{(1-\beta)/2}}\right| \leq C_2K_n
\Big(\frac{1}{Y_n} + \frac{1}{M-2n}\Big),
\]
\[
\left|\frac{X_n-2}{Y_n^\beta(1-\frac{2n+2}{M})^{(1-\beta)/2}}
-\frac{X_n}{Y_n^\beta(1-2n/M)^{(1-\beta)/2}}\right| \leq C_3K_n
\Big(\frac{1}{X_n} + \frac{1}{M-2n}\Big),
\]
\[
\left|\frac{X_n-1}{(Y_n-1)^\beta(1-\frac{2n+2}{M})^{(1-\beta)/2}}
-\frac{X_n}{Y_n^\beta(1-2n/M)^{(1-\beta)/2}}\right| \leq C_4K_n
\Big(\frac{1}{X_n} + \frac{1}{Y_n} + \frac{1}{M-2n}\Big),
\]
or
\[
\left|\frac{X_n}{(Y_n-2)^\beta(1-\frac{2n+2}{M})^{(1-\beta)/2}}
-\frac{X_n}{Y_n^\beta(1-2n/M)^{(1-\beta)/2}}\right| \leq C_5K_n
\Big(\frac{1}{Y_n} + \frac{1}{M-2n}\Big),
\]
with probabilities
\begin{multline*}
\frac{\beta X_n (M-2n-X_n-Y_n)}{(\beta X_n + Y_n)(M-2n-1)},
\ \frac{Y_n (M-2n-X_n-Y_n)}{(\beta X_n + Y_n)(M-2n-1)}, \ \frac{\beta
  X_n(X_n-1)}{(\beta X_n + Y_n)(M-2n-1)},
\\ \frac{(1+\beta)X_nY_n}{(\beta X_n + Y_n)(M-2n-1)},
\ \frac{Y_n(Y_n-1)}{(\beta X_n + Y_n)(M-2n-1)},
\end{multline*}
respectively.  Here $C_1$, $C_2$, $C_3$, $C_4$ and $C_5$ are constants
depending only on $\beta$ and $d$.  Therefore for a large constant
$C_0$ we have
\begin{align*}
\mathbb{E}((K_{n+1}-K_n)^2|\mathcal{F}_n)& \leq C_0 K_n^2
\Big(\frac{1}{(M-2n)^2} + \frac{1}{X_n^2}\frac{X_n}{\beta X_n + Y_n} +
\frac{1}{Y_n^2}\frac{Y_n}{\beta X_n + Y_n}\Big) \\ & \leq C_0 K_n^2
\Big(\frac{1}{(M-2n)^2} + \frac{X_n+Y_n}{X_nY_n(\beta X_n +
  Y_n)}\Big),
\end{align*}
which, together with the fact $X_nY_n \leq (M-2n)^2$, yields
\eqref{eq: estimates for diff rates process_2_1} .

\end{proof}



Unfortunately Lemma \ref{lemma: estimates for martingales} will not allow us to directly estimate the process $K_n$. A reason for this is that using bounds in Lemma \ref{lemma:estimates for diff rates process} depend on the values of $X_n$ and $Y_n$ themselves. Lemma \ref{lemma:step_for_different_rates_2} is a first attempt to fix this issue. 

We introduce a function which will appear quite often in the analysis in this section. Define
\begin{equation}\label{eq: function f}
f(t) = t^{1/2} -  \frac{Z_0}{M}t^{(d-1)/2}.
\end{equation}
Clearly $f$ is a positive concave function on $(0,1)$.  The following estimate will be used in the proof of Lemma \ref{lemma:step_for_different_rates_2}.

\begin{lemma}\label{lemma:estimate_on_the_integral}
Let $n$ be such that $M-2n \geq 1$, and $\gamma > 1$. Then there is a constant $C_0 = C_0(d,\gamma)$  such that 
\[
\sum_{k=0}^{n-1} \frac{1}{(M-2k)f(1-2k/M)^\gamma} \leq C_0 \left(\Big(\frac{M}{M-2n}\Big)^{\gamma/2} + \Big(\frac{M}{X_0 + Y_0}\Big)^{\gamma -1}\right).
\]
\end{lemma}

\begin{proof}
It is easy to check that the summand corresponding to $k=0$ is equal to $M^{\gamma-1}(X_0+Y_0)^{-\gamma}$, and therefore we get neglect this term in the sum.
Define
\[
g(t) = \frac{1}{tf(t)^\gamma} =  \frac{1}{t^{1+\gamma/2}\Big(1- \frac{Z_0}{M}t^{d/2-1}\Big)^{\gamma}}.
\]
One can calculate
\[
g'(x) = x^{-2-\gamma/2} \Big(1- \frac{Z_0}{M}x^{d/2-1}\Big)^{-\gamma-1}\left(\frac{Z_0}{M}x^{d/2-1} \Big(1 + \frac{\gamma (d-1)}{2} \Big) - 1 -\frac{\gamma}{2}\right),
\]
so $g(x)$ is either decreasing on $(0,1)$ or decreasing on an interval $(0,x_0)$ and increasing on $(x_0,1)$, for some $0 < x_0 < 1$. Therefore, we can bound the sum by the integral
\begin{equation}\label{eq: estimate on the integral}
\sum_{k=1}^{n-1} \frac{1}{(M-2k)f(1-2k/M)^\gamma}  = \frac{1}{2} \sum_{k=1}^{n-1}\frac{2}{M} g\Big(1-\frac{2k}{M}\Big) \leq \frac{1}{2}\int_{1-2n/M}^1 g(t)~dt.
\end{equation}
We split the integral into two parts. First we consider
\begin{align*}
\int_{(1-2n/M)\wedge 1/2}^{1/2} g(t)~dt &  = \int_{(1-2n/M)\wedge 1/2}^{1/2} \frac{1}{t^{1+\gamma/2}\Big(1- \frac{Z_0}{M}t^{d/2-1}\Big)^{\gamma}}~dt \\
& \leq \frac{1}{\Big(1-2^{1-d/2}\Big)} \int_{(1-2n/M)\wedge 1/2}^{1/2} t^{-1-\gamma/2}~dt \\
& \leq \frac{2}{\gamma\Big(1-2^{1-d/2}\Big)} \Big(\frac{M}{M-2n}\Big)^{\gamma/2},
\end{align*}
To analyze the other part we use a simple inequality $t^{d/2-1} \leq (t+1)/2$, which holds for all $0 \leq t \leq 1$ and $d \geq 3$. We get
\begin{align*}
\int_{1/2}^1 g(t)~dt  &   = \int_{1/2}^{1} \frac{1}{t^{1+\gamma/2}\Big(1- \frac{Z_0}{M}t^{d/2-1}\Big)^{\gamma}}~dt  \\
& \leq 2^{1+\gamma/2}\int_{1/2}^1 \frac{1}{\Big(1- \frac{Z_0}{2M}(1+t)\Big)^{\gamma}}~dt \\
& \leq \frac{2^{2+\gamma/2}M}{Z_0}\int_{1-Z_0/M}^1s^{-\gamma}~ds \\
& \leq \frac{2^{2+\gamma/2}}{\gamma-1}\frac{M}{Z_0}\left(\Big(\frac{M}{M-Z_0}\Big)^{\gamma-1} -1\right).
\end{align*}
Now we use the inequality $((1-t)^{-\alpha} -1)t^{-1} \leq (1\vee \alpha) (1-t)^{-\alpha}$, which holds for all $0 < t < 1$ and all $\alpha > 0$ (this inequality follows easily from the fact that $(1-t)^\alpha \geq 1-t$ for $\alpha \leq 1$ and $(1-t)^\alpha \geq 1-\alpha t$ for $\alpha > 1$). We apply this inequality for $t=Z_0/M$ and $\alpha = \gamma-1$. The above expression is then bounded by
\begin{align*}
\int_{1/2}^1 g(t)~dt & \leq \frac{2^{2+\gamma/2}(1 \vee (\gamma-1))}{\gamma-1}\Big(\frac{M}{M-Z_0}\Big)^{\gamma-1} \\
& = \frac{2^{2+\gamma/2}(1 \vee (\gamma-1))}{\gamma-1}\Big(\frac{M}{X_0 + Y_0}\Big)^{\gamma-1}
\end{align*}
Adding both parts to \eqref{eq: estimate on the integral} yields the claim.
\end{proof}

\begin{lemma}\label{lemma:step_for_different_rates_2}
Let $0 < \varepsilon \leq 1/2$. For a positive real number $c$ assume
that the condition
\begin{equation}\label{eq:step_for_different_rates_2_condition_definition}
\frac{K_k}{M^{(1-\beta)/2}} \leq c(M-2k)^{(1-\beta)/2}\Big(1-
\frac{Z_0}{M}(1-2k/M)^{d/2-1}\Big)^{1-\beta}
\end{equation}
is satisfied for $k=0$ and define the stopping time $\tau$ as the
smallest positive integer $k$ for which
\eqref{eq:step_for_different_rates_2_condition_definition} is not
satisfied.
Then there exists a sequence $(\delta_M)$ converging to $0$ and depending only on $\beta$ and $d$, 
and a constant $C = C(\beta, d, c)>0$, 
such that for any positive integer $n$
\begin{equation}\label{eq:step_for_different_rates_2}
\mathbb{P}(|K_{k \wedge \tau} -K_0| \geq \varepsilon K_0, \text{ for some
} 0 \leq k \leq n) \leq
\frac{C}{\varepsilon^2}\Big(\frac{M^{(1-\beta)/2}}{(M-2n)^{(1+\beta)/2}K_0}
+\frac{1}{X_0}\Big) + \delta_M,
\end{equation}
whenever
\begin{equation}\label{eq:step_for_different_rates_2_condition}
\varepsilon \geq C\Big(\frac{1}{M-2n} + \frac{1}{X_0+Y_0}\Big).
\end{equation}
\end{lemma}

\begin{proof}
We begin by showing that for any $C_0>0$ we can choose $C$ so that
\eqref{eq:step_for_different_rates_2_condition} implies
\begin{equation}\label{eq:diff_rates_choise_of_constant}
\Big(1-\frac{2k}{M}\Big)\Big(1-\frac{Z_0}{M}\Big(1-\frac{2k}{M}\Big)^{d/2-1}\Big)
\geq \frac{C_0}{M\varepsilon},
\end{equation}
for all $0 \leq k \leq n$.  Since the function $\phi(t)=
t-t^{d/2}Z_0/M$ is concave on $[0,1]$ the minimum of the left hand
side in \eqref{eq:diff_rates_choise_of_constant} is either $\phi(1) =
(M-Z_0)/M = (X_0+Y_0)/M$ or
\[
\phi(1-2n/M)\geq \left(1-\frac{2n}{M} -\Big(1-\frac{2n}{M}\Big)^{d/2}\vee \Big(1-\frac{2n}{M}\Big)\frac{X_0+Y_0}{M}\right). 
\]
For $C=C_0$, the value $\phi(1)$ is clearly  bounded from below by the right hand
side of \eqref{eq:diff_rates_choise_of_constant}. To estimate the value of $\phi(1-2n/M)$ from below, use the first term on the right hand side when $1-2n/M<1/2$, and otherwise use the second term.

Now define $\sigma$ as the first time $k$ that
\[
X_k + Y_k \leq \frac{M-2k}{2}\Big(1 -
\frac{Z_0}{M}(1-2k/M)^{d/2-1}\Big) \text{ or } Y_k \leq 2d,
\]
and define the process $K'_k = K_{k \wedge \tau \wedge \sigma}$. Since
$\sigma$ and $\tau$ are stopping times with respect to the filtration
$\mathcal{F}_k$, the process $K_k'$ is adapted to this filtration.

Next we show that there is a positive constant $c_1$ such that for all
$k < \sigma \wedge \tau$ we have
\begin{equation}\label{eq:step_for_different_rates_2_condition_lower_boud_for_Y}
Y_k \geq c_1(M-2k)\Big(1 - \frac{Z_0}{M}(1-2k/M)^{d/2-1}\Big).
\end{equation}
Assume, for the sake of contradiction, that for some $k < \sigma
\wedge \tau$ we have
\[
Y_k < c_1(M-2k)\Big(1 - \frac{Z_0}{M}(1-2k/M)^{d/2-1}\Big).
\]
Then since $k < \tau$ we have
\begin{align*}
X_k & \leq Y_k^\beta (M-2k)^{(1-\beta)/2} c(M-2k)^{(1-\beta)/2}\Big(1-
\frac{Z_0}{M}(1-2k/M)^{d/2-1}\Big)^{1-\beta} \\ & < c c_1^\beta
(M-2k)\Big(1 - \frac{Z_0}{M}(1-2k/M)^{d/2-1}\Big).
\end{align*}
Since $k<\sigma$ we have $X_k + Y_k > \frac{M-2k}{2}\Big(1 -
\frac{Z_0}{M}(1-2k/M)^{d/2-1}\Big)$ which implies $c c_1^\beta + c_1
\geq 1/2$. When $c_1$ is small enough we obtain a contradiction and
prove
\eqref{eq:step_for_different_rates_2_condition_lower_boud_for_Y}.
Lemma \ref{lemma:estimates for diff rates process} now implies that
for all $0 \leq k \leq n$
\begin{equation}\label{eq:first_moment_estimates_for_different_rates}
|\mathbb{E}(K_{k+1}'-K_k'|\mathcal{F}_k)| \leq
\frac{C_1K_k'}{(M-2k)^2\Big(1- \frac{Z_0}{M}(1-2k/M)^{d/2-1}\Big)^2} = \frac{C_1K_k'}{M(M-2k)f(1-2k/M)^2},
\end{equation}
and
\begin{align}\label{eq:first_moment_estimates_for_different_rates_second_moment}
\mathbb{E}((K_{k+1}'-K_k')^2|\mathcal{F}_k) & \leq
\frac{C_1M^{(1-\beta)/2}K_k'}{(M-2k)^{(3+\beta)/2}\Big(1-
  \frac{Z_0}{M}(1-2k/M)^{d/2-1}\Big)^{1+\beta}}\nonumber \\ & = \frac{C_1K_k'M^{-\beta}}{(M-2k)f(1-2k/M)^{1+\beta}},
\end{align}
for some constant $C_1=C_1(\beta,d,c)$.
Define
\[
p_k = \frac{C_1}{M(M-2k)f(1-2k/M)}.
\]
By Lemma \ref{lemma:estimate_on_the_integral} for $\gamma=2$
\begin{align}\label{eq:diff_rates_help_1}
\sum_{k=0}^{n-1}p_k  = \sum_{k=0}^{n-1} \frac{C_1}{M(M-2k)f(1-2k/M)^2} \leq C_0C_1 \left(\frac{1}{M-2n}+ \frac{1}{X_0 + Y_0}\right)
\end{align}
Combining
this with 
\eqref{eq:step_for_different_rates_2_condition} yields
$\sum_{k=0}^{n-1} p_k \leq \varepsilon/3$, for a large enough constant
$C$. Defining $q_{k,l} = \prod_{i=k}^\ell (1+p_i)$ as in Lemma
\ref{lemma: estimates for martingales} we have for all $1 \leq k \leq
\ell \leq n-1$
\begin{equation}\label{eq:diff_rates_sum_of_ps}
q_{k,\ell} \leq e^{\sum_{k=0}^{n-1} p_k} \leq e^{\varepsilon/3} \leq
\tfrac{3}{2}.
\end{equation}

Define the martingale $I_0=K_0'$, $I_k = K_k' -
\sum_{\ell=0}^{k-1}\mathbb{E}(K_{\ell+1}'-K_{\ell}'|\mathcal{F}_\ell)$
as in Lemma \ref{lemma: estimates for martingales}, which together
with \eqref{eq:diff_rates_sum_of_ps} implies
\begin{equation}\label{eq:diff_rates_difference_between_K_and_I}
|K_k' - I_k| \leq \frac{3}{2}\sum_{\ell=0}^{k-1}p_\ell I_\ell.
\end{equation}

Next estimate the second moment of jumps. Define 
\[
r_k = \frac{C_1M^{-\beta}}{(M-2k)f(1-2k/M)^{1+\beta}},
\]
so that by Lemma \ref{lemma:estimate_on_the_integral} for $\gamma=1+\beta$ we have
\[
\sum_{k=0}^{n-1} r_k \leq C_0C_1M^{-\beta} \left(\Big(\frac{M}{M-2n}\Big)^{(1+\beta)/2} + \Big(\frac{M}{X_0+Y_0}\Big)^{\beta}\right).
\]
Lemma \ref{lemma: estimates for martingales}
now yields
\begin{align*}
\mathbb{E}((I_n-I_0)^2) \leq \frac{3C_0C_1 I_0}{2M^\beta}\left(\Big(\frac{M}{M-2n}\Big)^{(1+\beta)/2} + \Big(\frac{M}{X_0+Y_0}\Big)^{\beta}\right)
\end{align*}
Combined with Doob's maximal
inequality, this implies
\begin{align*}
\mathbb{P}(|I_k -I_0| \geq \frac{\varepsilon}{3} I_0, \text{ for some } 0
\leq k \leq n) & \leq \frac{27 C_0 C_1 Y_0^\beta}{M^\beta X_0\varepsilon^2} \left(\Big(\frac{M}{M-2n}\Big)^{(1+\beta)/2} + \Big(\frac{M}{X_0+Y_0}\Big)^{\beta}\right)\\
& \leq
\frac{C_2}{\varepsilon^2}\Big(\frac{M^{(1-\beta)/2}}{(M-2n)^{(1+\beta)/2}K_0}
+ \frac{1}{X_0}\Big),
\end{align*}
for some constant $C_2=C_2(\beta,d,c)$.
If $|I_k -I_0| \leq \frac{\varepsilon}{3} I_0$, for all $0 \leq k \leq
n$, then \eqref{eq:diff_rates_difference_between_K_and_I} and the
inequality $\sum_{k=0}^{n-1}p_k \leq \varepsilon/3$ imply
\[
|K_k'-K_0'| \leq |K_k'-I_k| + |I_k - I_0| \leq
\frac{3}{2}\Big(1+\frac{\varepsilon}{3}\Big)I_0\frac{\varepsilon}{3} +
\frac{\varepsilon}{3}I_0 \leq \varepsilon K_0.
\]
Thus we have
\[
\mathbb{P}(|K_k' -K_0'| \geq \varepsilon K_0, \text{ for some } 0 \leq k
\leq n) \leq
\frac{C_2}{\varepsilon^2}\Big(\frac{M^{(1-\beta)/2}}{(M-2n)^{(1+\beta)/2}K_0}
+ \frac{1}{X_0}\Big).
\]

Define $1- \delta_M$ to be the probability that
\begin{equation}\label{eq: an event to estimate the sum of colors}
X_k + Y_k \geq \frac{1}{2}\Big(M-2k-Z_0(1-2k/M)^{d/2}\Big)
\end{equation}
holds for all $0 \leq k \leq n$. By Theorem \ref{thm: urn applied to model} we have that
$\lim_{M \to \infty}\delta_M=0$. Since $K_k' = K_{k \wedge \tau}$ for
$k \leq \sigma \wedge n$ it is enough to show that
$\mathbb{P}(\sigma<\tau\wedge n) \leq \delta_M$. To this end simply
observe that on the event in \eqref{eq: an event to estimate the sum of colors}, inequality
\eqref{eq:step_for_different_rates_2_condition_lower_boud_for_Y},
$\sigma < \tau \wedge n$ and the fact that $Y_\sigma \leq 2d$ imply
\[
c_1(M-2k)\Big(1- \frac{Z_0}{M}(1-2k/M)^{d/2-1}\Big) \leq Y_k \leq
2d+2,
\]
for $k=\sigma-1$. However, by \eqref{eq:diff_rates_choise_of_constant}
and the fact that $\varepsilon \leq 1/2$, this is impossible for $C$
large enough in \eqref{eq:step_for_different_rates_2_condition}
(recall that the value of $c_1$ depended only on $c$ and $\beta$).
\end{proof}

\begin{remark}\label{rem:doob_vs_freedman}
A more careful analysis of the process $K_n$ would allow one to replace Doob's maximal inequality with Freedman's inequality (see \cite{Freedman75}), and obtain exponential bound in the statement of Lemma \ref{lemma:step_for_different_rates_2}. 
However the bound above suffices to our purposes and,
to avoid even more tedious analysis, we use Doob's maximal inequality. 
\end{remark}

It is perhaps inconvenient to apply Lemma \ref{lemma:step_for_different_rates_2} as the assumption \eqref{eq:step_for_different_rates_2_condition_definition} 
already involves an estimate on $K_k$ one would have to check. Fortunately, the following easy corollary handles this problem.

\begin{corollary}\label{cor:step_for_different_rates_3}
Let $0 < \varepsilon \leq 1/2$. Assume that for a positive real number
$c$ and an integer $n$ the inequality
\[
\frac{K_0}{M^{1-\beta}} \leq c f\Big(1-\frac{2k}{M}\Big)^{1-\beta}
\]
is satisfied for $0 \leq k \leq n$.  Then there exists a sequence
$(\delta_M)$ converging to $0$ and depending only on $\beta$ and $d$, and a
constant $C = C(\beta,d,c)>0$, such that
\begin{equation}\label{eq:step_for_different_rates_3_probability}
\mathbb{P}(|K_{k} -K_0| \geq \varepsilon K_0, \text{ for some } 0 \leq k
\leq n) \leq
\frac{C}{\varepsilon^2}\Big(\frac{M^{(1-\beta)/2}}{(M-2n)^{(1+\beta)/2}K_0}
+\frac{1}{X_0}\Big) + \delta_M,
\end{equation}
whenever
\[
\varepsilon \geq C\Big(\frac{1}{M-2n} + \frac{1}{X_0+Y_0}\Big).
\]
\end{corollary}

\begin{proof}
The assumption in the statement simply reads
\[
\frac{K_0}{M^{(1-\beta)/2}} \leq c(M-2k)^{(1-\beta)/2}\Big(1-
\frac{Z_0}{M}(1-2k/M)^{d/2-1}\Big)^{1-\beta}.
\]
Define stopping time $\tau$ as the smallest integer $k$ such that
\[
\frac{K_k}{M^{(1-\beta)/2}} > 2c(M-2k)^{(1-\beta)/2}\Big(1-
\frac{Z_0}{M}(1-2k/M)^{d/2-1}\Big)^{1-\beta}.
\]
Applying Lemma \ref{lemma:step_for_different_rates_2} we conclude that
the event
\begin{equation}\label{eq:diff_rates_first_part}
|K_{k \wedge \tau} -K_0| \leq \varepsilon K_0, \text{ for all } 0 \leq
k \leq n,
\end{equation}
has probability of at least
\[
1-
\frac{C}{\varepsilon^2}\Big(\frac{M^{(1-\beta)/2}}{(M-2n)^{(1+\beta)/2}K_0}
+\frac{1}{X_0}\Big) - \delta_M,
\]
for an appropriately chosen constant $C$.  Since $\tau < n$ implies
$K_\tau > 2K_0$, on the event in \eqref{eq:diff_rates_first_part} we
have that $\tau \geq n$. Thus in the event in
\eqref{eq:diff_rates_first_part} we can replace $K_{k \wedge \tau}$ by
$K_k$ which completes the proof.
\end{proof}

As we said, the assumption in the previous corollary is more convenient than \eqref{eq:step_for_different_rates_2_condition_definition}. However, the problem is that it might fail to hold throughout the time regime. To overcome this problem, we switch the roles of processes $X$ and $Y$ and the value of the parameter $\beta$ to $1/\beta$ when this condition fails to hold. This yields the following lemma.


\begin{lemma}\label{lemma:diff_rates_process_analysis}
Let $\beta$ be any positive real number. Let $(L_M)$ and $(L_M')$ be two sequences of
positive numbers converging to $\infty$ and such that $L_M \geq L_M'$ and $\lim_M L_M
M^{-\gamma} = 0$, for any $\gamma >0$.  Assume that $\min(X_0, Y_0)\geq L_M$.
Define $n_0$ as
the largest integer such that
\begin{equation}\label{eq:n0 definition}
M-2n_0 \geq L_M' \Big(\frac{M^{(1-\beta)/2}}{K_0} \vee
\frac{K_0}{M^{(1-\beta)/2}}\Big)^{2/(1+\beta)}.
\end{equation}
Then for any $\varepsilon > 0$ there is a sequence of numbers $\eta_M$ 
converging to zero such that
\[
\mathbb{P}(|K_n - K_0| \leq \varepsilon K_0, \ \text{ for all } 0 \leq n \leq n_0) \geq 1-\eta_M.
\]
\end{lemma}

\begin{remark}\label{rem:lower_bound_for_the_smaller_start}
Note that the slower $L_M'$ grows compared to $L_M$ the stronger the result is. In fact the sequence $\eta_M$ will depend on $L_M'$, $\beta$, $d$ and $\varepsilon$ (but not explicitly on $X_0$ and $Y_0$). We need this flexibility in the applications of Lemma \ref{lemma:diff_rates_process_analysis}.
Observe that  $X_0 \geq
L_M$ and $Y_0 \geq L_M$
imply that
\[
L_M M^{-\beta}\leq K_0 \leq M L_M^{-\beta}
\]
and therefore
\[
1-2n_0/M \leq L_M' \Big(L_M^{-2/(1+\beta)} \vee L_M^{-2\beta/(1+\beta)}\Big) + 2/M.
\]
In particular, for any $\gamma > 0$ there is $\delta>0$ such that when $L_M' L_M^{-\delta}$ converges to zero,  $L_M' (1-2n_0/M)^{\gamma}$ converges to 0 as well.
\end{remark}

First we prove a few technical details. We start by recalling Theorem \ref{thm: urn applied to model} we know that
for any $\varepsilon > 0$ there is a sequence $(\delta_M)$ converging
to $0$ such that with probability of at least $1- \delta_M$ we have
that for every $0 \leq n \leq \tfrac{M}2-1$
\begin{equation}\label{eq:diff_rates_basic_assumption}
1-\varepsilon \leq \frac{X_n+Y_n}{M-2n - Z_0 (1-2n/M)^{d/2}} =
\frac{X_n+Y_n}{f(1-2n/M)\sqrt{M(M-2n)}} \leq 1+\varepsilon~.
\end{equation}

The following two simple claims will be helpful when switching the roles of the processes $X$ and $Y$.

\begin{lemma}\label{lemma:switching_help}
There exists a sequence $\delta_M$ depending only on $\varepsilon$, $d$ and $\beta$ and converging to zero such that, with probability of at least $1-\delta_M$, for any non-negative integers $n$ and $k$ such that $M-2n-2k>1$ we have
\begin{equation}\label{eq:diff_rates_main_result_help}
(1-\varepsilon) \leq
  \frac{1-\frac{Z_n}{M-2n}\Big(1-\frac{2k}{(M-2n)}\Big)^{d/2-1}}{1-
    \frac{Z_0}{M}\Big(1-\frac{2(n+k)}{M}\Big)^{d/2-1}} \leq
  (1+\varepsilon)
\end{equation}
\end{lemma}

\begin{proof}
Using that $X_n +Y_n = M-2n-Z_n$ for all $n$ (which holds with probability converging to 1), the inequalities 
 \eqref{eq:diff_rates_basic_assumption} can be rewritten as
\[
(1+\varepsilon)\frac{Z_0}{M}\Big(1-\frac{2n}{M}\Big)^{d/2-1} -\varepsilon
\leq \frac{Z_n}{M-2n} \leq \varepsilon + (1-
\varepsilon)\frac{Z_0}{M}\Big(1-\frac{2n}{M}\Big)^{d/2-1}.
\]
It is not hard to check that this in turn implies \eqref{eq:diff_rates_main_result_help}.
\end{proof}

\begin{lemma}\label{lemma:switching_help_2}
Suppose the assumptions of Lemma \ref{lemma:diff_rates_process_analysis} hold. Assume that $\beta \neq 1$ and let $c>1$. Then there is a sequence $\delta_M$, depending on $\beta$ and $d$ and converging to zero, and a positive constant $c'$ depending on $\beta$, $c$ and $d$  such that with the probability of at least $1-\delta_M$, for all $0 \leq k \leq n_0$
\begin{equation}\label{eq:diff_rates_break_values_balance_property}
\frac{1}{c}f(1-2k/M) \leq \frac{K_k^{1/(1-\beta)}}{M} \leq cf(1-2k/M)
\Rightarrow X_k \wedge Y_k \geq c'L_M'.
\end{equation}
\end{lemma}

\begin{proof}
We first show that if the assumptions hold then for a constant $c''=c''(\beta,c,d)$
\[
\frac{1}{c}f(1-2k/M) \leq \frac{K_k^{1/(1-\beta)}}{M} \leq cf(1-2k/M)
\Rightarrow X_k \wedge Y_k \geq c''(X_k+Y_k).
\]
If $X_k < c''(X_k + Y_k)$ then $Y_k > (1-c'')(X_k + Y_k)$ and
\[
\frac{K_k^{1/(1-\beta)}}{M} =
\left(\frac{X_k}{Y_k^\beta}\right)^{1/(1-\beta)} \frac{1}{\sqrt{M(M-2k)}}
\lessgtr \frac{c''^{1/(1-\beta)}}{(1-c'')^{\beta/(1-\beta)}} \frac{X_k +
  Y_k}{\sqrt{M(M-2k)}},
\]
where the inequality in $\lessgtr$ is $<$ for $\beta < 1$ and $>$ for
$\beta > 1$. Using \eqref{eq:diff_rates_basic_assumption} to bound the
term $(X_k + Y_k)(M(M-2k))^{-1/2}$ we obtain a contradiction with the
left hand side of \eqref{eq:diff_rates_break_values_balance_property}
for $c''$ such that $c''(1-c'')^{-\beta} \leq
(c/(1-\varepsilon))^{-|1-\beta|}$, which yields $X_k \geq c'' (X_k
+Y_k)$. In the same way one can show that $Y_k \geq c''(X_k + Y_k)$ for
an appropriately chosen $c''$.

To finish the proof we show that or every $0
\leq n \leq n_0$
\[
X_n + Y_n \geq L_M'/3.
\]
To check this, by
\eqref{eq:diff_rates_basic_assumption} it is enough to check that
$\phi(t) \geq \frac{2L_M'}{3M}$ for $1-2n_0/M \leq t \leq 1$, where
$\phi(t)= t-t^{d/2}Z_0/M$. By the concavity of $\phi$  and the fact that $M - 2n_0 \geq L_M'$ it is enough to
check the lower bound for $t= L_M'/M$ and $t=1$ for which the claim is
obvious.  
\end{proof}

\begin{lemma}\label{lemma:starting_help}
If $X_0 \leq Y_0$, then $f(1) \geq K_0^{1/(1-\beta)}M^{-1}$ for $\beta < 1$ and $f(1) \leq 2K_0^{1/(1-\beta)}M^{-1}$ for $\beta>1$. 
\end{lemma}

\begin{proof}
To prove the statement for $\beta<1$ simply observe that it is
equivalent to $X_0 Y_0^{-\beta} \leq (X_0 + Y_0)^{1-\beta}$ and to
\[
\frac{X_0}{X_0 + Y_0} \leq \Big(\frac{Y_0}{X_0+Y_0}\Big)^\beta,
\]
which, because of $X_0 \leq Y_0$ surely holds for $\beta <
1$. The statement for $\beta>1$ is similarly
equivalent to
\[
\frac{2X_0}{X_0 + Y_0} \leq
\Big(\frac{2Y_0}{X_0+Y_0}\Big)^\beta,
\]
which again holds, since the left hand side is smaller than $1$ and the right hand side is larger than $1$.
\end{proof}

\begin{proof}[Proof of Lemma \ref{lemma:diff_rates_process_analysis}]
First observe that without loss of generality we can assume that $X_0 \leq Y_0$.
Otherwise, we simply replace the roles of processes $X$ and $Y$ and setting
$X$ to have rate $1$ and $Y$ to have the rate $1/\beta$. This causes the process $K_n$ to
become $K_n^{-1/\beta}$ while the value of $n_0$  remains unchanged. 

Throughout the proof we assume that $M$ is sufficiently large for the
estimates to hold. We can assume $\varepsilon < 1/2$ and $\varepsilon \geq C'/L_M'$
for any constant $C'$ (at different
stages in the proof we choose convenient values for $C'$).  
In particular, as $M-2n_0 \geq L_M'$ and $L_M ' \leq L_M$ this assumption implies
\begin{equation}\label{eq:diff_rates_assumption_on_epsilon}
\varepsilon \geq \frac{C'}{2}\Big(\frac{1}{M-2n_0} + \frac{1}{X_0+Y_0}\Big),
\end{equation}
which will enable us to apply Corollary \ref{cor:step_for_different_rates_3}.



First we present the bound for the simplest
case when $\beta = 1$. Because $X_0 \leq Y_0$ we have $K_0 \leq 1$ and
in this case $n_0$ is the largest integer with the property that
$M-2n_0 \geq L_M'/K_0$.  By Corollary
\ref{cor:step_for_different_rates_3} applied with $C_0 = 1$ we have
that with probability at least
\[
1 - \frac{C}{\varepsilon^2}\Big(\frac{1}{(M-2n_0)K_0} +
\frac{1}{X_0}\Big) - \delta_M \geq 1 - \frac{2C}{\varepsilon^2 L_M'} -
\delta_M,
\]
we have
\[
|K_{k} - K_0| \leq \varepsilon K_0, \text{ for all } 0 \leq k \leq n_0,
\]
which proves the claim when $\beta =1$.

Next we analyze the  case of $\beta
\neq 1$. 
Fix numbers $0 < c_2 < 1$ and $c_1 > 2$. 
Note that $f$ from \eqref{eq: function f} is a concave nonnegative function on $[0,1]$ and
$f(0)=0$.  By Lemma \ref{lemma:starting_help} if $\beta < 1$ there
is a unique point $0 < t_2 < 1$ such that $f(t_2) = c_2
K_0^{1/(1-\beta)}/M$.  If $\beta > 1$ then in the case when
$\max_{[0,1]}f > c_1 K_0^{1/(1-\beta)}/M$ denote by $t_2$ the
smallest element in $f^{-1}(c_2 K_0^{1/(1-\beta)}/M)$ and by $t_1$ the
largest element in $f^{-1}(c_1 K_0^{1/(1-\beta)}/M)$.  Define
$\overline{n}_1$ as the largest integer such that $M-2\overline{n}_1
\geq Mt_1$, and $\overline{n}_2$ the largest integer such that
$M-2\overline{n}_2 \geq Mt_2$.  Furthermore, define $n_1 =
\overline{n}_1 \wedge n_0$ and $n_2=\overline{n}_2 \wedge n_0$.

We also need the following inequality 
\begin{equation}\label{eq:diff_rates_break_values}
c_2 K_0^{1/(1-\beta)}/M \leq f(1-2n_i/M) \leq c_1 K_0^{1/(1-\beta)}/M,
\end{equation}
whenever $n_i < n_0$, for $i=1,2$. 
When $\beta < 1$ the lower bound $f(1-2n_2/M) \geq  c_2 K_0^{1/(1-\beta)}/M$ follows from concavity of $f$, Lemma \ref{lemma:starting_help} and the fact that 
$f(1-2n_2/M) \geq f(t_2)\wedge f(1)$.
For $\beta > 1$ the condition $X_0 \leq Y_0$ implies that $K_0^{1/(1-\beta)} \geq Y_0$. Since $f'(t) \geq -(d-1)/2$ for any $t$ we have $M(f(1-2n_i/M) - f(t_i))$ is bounded from below, and since $K_0^{1/(1-\beta)} \to \infty$ the lower bound follows.
For the upper bounds in \eqref{eq:diff_rates_break_values} it suffices to prove that 
$f'(t) \leq K_0^{1/(1-\beta)}/2$ whenever
\[
t \geq \frac{M-2n_0}{M}, \text{ and } c_2 \frac{K_0^{1/(1-\beta)}}{M}
\leq f(t) \leq c_1 \frac{K_0^{1/(1-\beta)}}{M}.
\]
To end this use a simple inequality $f'(t)\leq \frac{(d-1)f(t)}{2t}$ and then use it to show that for all $t$ as above we have $t \geq L_M'/M$ and
\[
f'(t) \leq \frac{c_1(d-1)K_0^{1/(1-\beta)}/M}{2L_M'/M} = \frac{c_1(d-1)}{2L_M'}K_0^{1/(1-\beta)}.
\]
This finishes the proof of \eqref{eq:diff_rates_break_values}.


We separate the analysis into three cases:
\begin{itemize}
\item[a)] $\beta > 1$ and $\max_{[0,1]}f \leq c_1
  K_0^{1/(1-\beta)}/M$,
\item[b)] $\beta > 1$ and $\max_{[0,1]}f > c_1 K_0^{1/(1-\beta)}/M$,
\item[c)] $\beta < 1$.
\end{itemize}

To summarize, in case a) we have
\begin{equation}\label{eq:diff_rates_function_behavior_a}
f(1-2k/M) \leq c_1 K_0^{1/(1-\beta)}/M, \ \text{ for } 0 \leq k \leq
M/2-1,
\end{equation}
in case b)
\begin{equation}\label{eq:diff_rates_function_behavior_b}
f(1-2k/M) \left\{\begin{array}{ll} \leq c_1 K_0^{1/(1-\beta)}/M, &
\text{ for } 0 \leq k \leq n_1,\\ \geq c_2 K_0^{1/(1-\beta)}/M, &
\text{ for } n_1 \leq k \leq n_2, \text{ if } n_1 < n_0,\\ \leq c_1
K_0^{1/(1-\beta)}/M, & \text{ for } n_2 \leq k \leq M/2-1, \text{ if }
n_2 < n_0,
\end{array}\right.
\end{equation}
and in the case c)
\begin{equation}\label{eq:diff_rates_function_behavior_c}
f(1-2k/M) \left\{\begin{array}{ll} \geq c_2 K_0^{1/(1-\beta)}/M, &
\text{ for } 0 \leq k \leq n_2, \\ \leq c_1 K_0^{1/(1-\beta)}/M, &
\text{ for } n_2 \leq k \leq M/2-1, \text{ if } n_2 < n_0.
\end{array}\right.
\end{equation}

For the case a) note that \eqref{eq:diff_rates_function_behavior_a}
can be rewritten as
\begin{equation}\label{eq:diff_rates_case_1_1}
\frac{K_0}{M^{1-\beta}} \leq c_1^{\beta
  -1}f(1-2k/M)^{1-\beta}.
\end{equation}
By
\eqref{eq:diff_rates_assumption_on_epsilon} we can apply Corollary
\ref{cor:step_for_different_rates_3} to get that the event that $|K_k
- K_0| \leq \varepsilon K_0$ for $0 \leq k \leq n_0$ has probability
at least
\[
1-
\frac{C}{\varepsilon^2}\Big(\frac{M^{(1-\beta)/2}}{(M-2n_0)^{(1+\beta)/2}K_0}
+\frac{1}{X_0}\Big) - \delta_M \geq 1-
\frac{C}{\varepsilon^2}\Big(\frac{1}{L_M'^{(1+\beta)/2}}+
\frac{1}{L_M}\Big) - \delta_M.
\]
The inequality above follows from the definition of $n_0$. This
suffices for the case a).

Next we assume that we are under the assumptions of case b).
From the first inequality in \eqref{eq:diff_rates_function_behavior_b}
we obtain that \eqref{eq:diff_rates_case_1_1} holds for $0 \leq k \leq
n_1$.
Because $M-2n_1 \geq M-2n_0$ we can apply Corollary
\ref{cor:step_for_different_rates_3} like in the case a) and conclude
that the event that $|K_{k} -K_0| \leq \frac{\varepsilon}{3} K_0$ holds
for all $0 \leq k \leq n_1$, has probability of at least
\[
1-
\frac{9C}{\varepsilon^2}\Big(\frac{M^{(1-\beta)/2}}{(M-2n_1)^{(1+\beta)/2}K_0}
+\frac{1}{X_0}\Big) - \delta_M \geq 1-
\frac{9C}{\varepsilon^2}\Big(\frac{1}{L_M'^{(1+\beta)/2}}+
\frac{1}{L_M'}\Big) - \delta_M.
\]
Now if $n_1=n_0$ we are done with the analysis in the case b).

Otherwise assume that $|K_{k} -K_0| \leq \frac{\varepsilon}{3} K_0$ holds
for all $0 \leq k \leq n_1$ indeed, and note that
\eqref{eq:diff_rates_break_values} implies that
\begin{equation}\label{eq:diff_rates_case_b_first_balance}
\frac{(1+\varepsilon)^{1/(1-\beta)}}{c_1}f(1-2n_1/M) \leq
\frac{K_{n_1}^{1/(1-\beta)}}{M} \leq
\frac{(1-\varepsilon)^{1/(1-\beta)}}{c_2} f(1-2n_1/M),
\end{equation}
which then by \eqref{eq:diff_rates_break_values_balance_property} 
implies that both $X_{n_1}$
and $Y_{n_1}$ are at least $c'L_M'$ for some constant $c'$.

Define $M' = M-2n_1$, $X_k' = X_{n_1+k}$, $Y_k' = Y_{n_1+k}$, $Z_k' =
Z_{n_1+k}$ and
\[
K_k' = \frac{Y_k'}{X_k'^{1/\beta}(1-2k/M')^{(1-1/\beta)/2}}.
\]
It is easy to check that in fact
\begin{equation}\label{eq:diff_rates_process_connection}
K_k' = K_{n_1+k}^{-1/\beta}\Big(\frac{M'}{M}\Big)^{(1-1/\beta)/2}.
\end{equation}
Similarly to \eqref{eq:diff_rates_case_1_1}, the second inequality in
\eqref{eq:diff_rates_function_behavior_b} implies that for $n_1 \leq k
\leq n_2$
\[
\frac{K_0}{M^{(1-\beta)/2}} \geq c_2^{\beta
  -1}(M-2k)^{(1-\beta)/2}\Big(1-
\frac{Z_0}{M}(1-2k/M)^{d/2-1}\Big)^{1-\beta}~.
\]
Combined with the inequality $K_{n_1} \geq (1-\varepsilon)K_0$,
\[
K_{n_1} \geq (1-\varepsilon)c_2^{\beta
  -1}M^{(1-\beta)/2}(M-2(n_1+k))^{(1-\beta)/2}\Big(1-
\frac{Z_0}{M}(1-2(n_1+k)/M)^{d/2-1}\Big)^{1-\beta},
\]
for $0 < k \leq n_2- n_1$.  Raising the above inequality to the power
of $-1/\beta$ and using \eqref{eq:diff_rates_main_result_help} and
\eqref{eq:diff_rates_process_connection} we obtain
\begin{equation}\label{eq:diff_rates_jump_condition}
\frac{K_0'}{M'^{(1-1/\beta)/2}} \leq
\frac{(1+\varepsilon)^{1-1/\beta}}{(1-\varepsilon)^{1/\beta}c_2^{1-1/\beta}}(M'-2k)^{(1-1/\beta)/2}\Big(1-\frac{Z_0'}{M'}(1-2k/M')^{d/2-1}\Big)^{1-1/\beta},
\end{equation}
for all $0 \leq k \leq n_2-n_1$.
Since $X_0' + Y_0' > 2c'L_M'$ and $M'-2(n_2-n_1) \geq M-2n_0 $ and \eqref{eq:diff_rates_assumption_on_epsilon}
we can apply Corollary \ref{cor:step_for_different_rates_3} to conclude that the
event $|K_{k}' -K_0'| \leq \frac{\varepsilon}{2^{\beta+3}} K_0'$, for all
$0 \leq k \leq n_2-n_1$, is of probability at least
\begin{align}\label{eq:diff_rates_second_part}
&1-
  \frac{4^{\beta+3}C}{\varepsilon^2}\Big(\frac{M'^{(1-1/\beta)/2}}{(M'-2(n_2-n_1))^{(1+1/\beta)/2}K_0'}
  +\frac{1}{Y_0'}\Big) - \delta_{M'} \nonumber \\ & \geq 1-
  \frac{4^{\beta+3}C}{\varepsilon^2}\Big(\frac{(1+\varepsilon)^{1/\beta}
    K_0^{1/\beta}M^{(1-1/\beta)/2}}{(M-2n_2)^{\frac{1+\beta}{2\beta}}}
  +\frac{1}{Y_{n_1}}\Big) - \delta_{L_M'} \nonumber \\ & \geq 1-
  \frac{4^{\beta+3}C}{\varepsilon^2}\Big((1+\varepsilon)^{1/\beta}
  L_M'^{-\frac{1+\beta}{2\beta}} +(c'L_M')^{-1}\Big) - \delta_{L_M'},
\end{align}
where we used the fact that $n_2 \leq n_0$, the definition of $n_0$
and the lower bound $Y_{n_1} \geq c'L_M'$.  Then this event can be
rewritten as
\[
\left|\Big(\frac{K_{n_1+k}}{K_{n_1}}\Big)^{-1/\beta} - 1\right| \leq
\frac{\varepsilon}{2^{\beta+3}}, \text{ for all } 0 \leq k \leq n_2-n_1,
\]
which, using the fact that $\varepsilon \leq 1/2$ easily implies that
\[
\left|K_{n_1+k} - K_{n_1}\right| \leq \frac{\varepsilon}{4} K_{n_1},
\text{ for all } 0 \leq k \leq n_2-n_1,
\]
and
\begin{equation}\label{eq:diff_rates_secon_part_main}
\left|K_{k} - K_{0}\right| \leq \left|K_{k} - K_{n_1}\right| +
\left|K_{n_1} - K_{0}\right| \leq
\frac{\varepsilon}{4}\Big(1+\frac{\varepsilon}{3}\Big)K_0 +
\frac{\varepsilon}{3}K_0 \leq \frac{2\varepsilon}{3} K_0, \text{ for all }
n_1 \leq k \leq n_2.
\end{equation}
If $n_2 = n_0$ we are done. 

Otherwise, assume that the event in
\eqref{eq:diff_rates_secon_part_main} holds and observe that
\eqref{eq:diff_rates_case_b_first_balance} holds when $n_1$ is
replaced by $n_2$.
Thus again we have that $X_{n_2} \geq c' L_M'$, and $Y_{n_2} \geq c'L_M'$.

Define $M'' = M-2n_2$, $X_k'' = X_{n_2+k}$, $Y_k'' = Y_{n_2+k}$,
$Z_k'' = Z_{n_2+k}$ and
\[
K_k'' = \frac{X_k''}{Y_k''^{\beta}(1-2k/M'')^{(1-\beta)/2}} =
K_{n_2+k}\Big(\frac{M''}{M}\Big)^{(1-\beta)/2}.
\]
Following the argument that lead to
\eqref{eq:diff_rates_jump_condition}, and using the third inequality
in \eqref{eq:diff_rates_function_behavior_b} we can deduce that
\[
\frac{K_0''}{M''^{(1-\beta)/2}} \leq
(1+\varepsilon)(1-\varepsilon)^{1-\beta}
c_1^{\beta-1}(M''-2k)^{(1-\beta)/2}\Big(1-\frac{Z_0''}{M''}(1-2k/M'')^{d/2-1}\Big)^{1-\beta}.
\]
Since $X_0'' + Y_0'' > 2c'L_M'$ and $M''-2(n_0-n_2) = M-2n_0$ and \eqref{eq:diff_rates_assumption_on_epsilon} we can apply Corollary \ref{cor:step_for_different_rates_3} to conclude  that with
probability at least
\begin{align*}
&1-
  \frac{16C}{\varepsilon^2}\Big(\frac{M''^{(1-\beta)/2}}{(M''-2(n_0-n_2))^{(1+\beta)/2}K_0''}
  +\frac{1}{X_0''}\Big) - \delta_{M''}\nonumber\\ &\geq 1-
  \frac{16C}{\varepsilon^2}\Big(\frac{M^{(1-\beta)/2}}{(1-2\varepsilon/3)(M-2n_0)^{\frac{1+\beta}{2}}K_{0}}
  +\frac{1}{X_{n_1}}\Big) - \delta_{L_M} \nonumber \\ & \geq 1-
  \frac{16C}{\varepsilon^2}\Big((1-2\varepsilon/3)^{-1} L_M'^{-(1+\beta)/2}
  +(c'L_M')^{-1}\Big) - \delta_{L_M}.
\end{align*}
the event
\[
|K_{k}'' -K_0''| \leq \frac{\varepsilon}{4} K_0'', \text{ for all } 0
\leq k \leq n_0-n_2
\]
occurs.  After a glance at the definition of $K_k''$ we proceed as in
the previous step and finish the analysis  in the
case b).

The case c) is handled in the same way. The first inequality in
\eqref{eq:diff_rates_function_behavior_c}, inequality
\eqref{eq:diff_rates_assumption_on_epsilon}
and Corollary \ref{cor:step_for_different_rates_3} imply
that the event $|K_k - K_0| \leq \varepsilon K_0/3$, for $0 \leq k
\leq n_2$, has probability at least
\[
1-
\frac{9C}{\varepsilon^2}\Big(\frac{M^{(1-\beta)/2}}{(M-2n_2)^{(1+\beta)/2}K_0}
+\frac{1}{X_0}\Big) - \delta_M \geq 1-
\frac{9C}{\varepsilon^2}\Big(\frac{1}{L_M'^{(1+\beta)/2}}+
\frac{1}{L_M}\Big) - \delta_M.
\]
This finishes the proof if $n_2 = n_0$. Otherwise, observe that
\eqref{eq:diff_rates_case_b_first_balance} holds and thus we have $Y_{n_2}
\geq c' L_M'$. Then define $X_k'=X_{n_2+k}$, $Y_k'=Y_{n_2+k}$,
$Z_k'=Z_{n_2+k}$, $M'=M-2n_2$ and
\[
K_k' = \frac{Y_k'}{X_k'^{1/\beta}(1-2k/M')^{(1-1/\beta)/2}} =
K_k^{-1/\beta}\Big(\frac{M}{M'}\Big)^{\frac{1-\beta}{2\beta}}.
\]
The second inequality in \eqref{eq:diff_rates_function_behavior_c} and
\eqref{eq:diff_rates_basic_assumption} now imply
\[
\frac{K_0'}{M'^{(1-1/\beta)/2}} \leq
\frac{c_1^{(1-\beta)/\beta}(1+\varepsilon)^{1-1/\beta}}{(1-\varepsilon)^{1/\beta}}(M'-2k)^{(1-1/\beta)/2}\Big(1-\frac{Z_0'}{M'}(1-2k/M')^{d/2-1}\Big)^{1-1/\beta}.
\]
Now we can apply Corollary \ref{cor:step_for_different_rates_3} and
conclude that with probability at least
\begin{align*}
& 1-
  \frac{4^{\beta+1}C}{\varepsilon^2}\Big(\frac{M'^{(1-1/\beta)/2}}{(M-2n_0)^{(1+1/\beta)/2}K_0'}+\frac{1}{Y_0'}\Big)
  - \delta_{M'} \\ & \geq 1-
  \frac{4^{\beta+1}C}{\varepsilon^2}\Big(\frac{(1+\varepsilon)^{1/\beta}K_0^{1/\beta}}{(M-2n_0)^{\frac{1+\beta}{2\beta}}M^{\frac{1-\beta}{2\beta}}}+\frac{1}{c'L_M'}\Big)
  - \delta_{L_M'}\\ & \geq 1-
  \frac{4^{\beta+1}C}{\varepsilon^2}\Big(\frac{(1+\varepsilon)^{1/\beta}}{L_M'^{\frac{1+\beta}{2\beta}}}+\frac{1}{c'L_M'}\Big)
  - \delta_{L_M'},
\end{align*}
we have that $|K_k' - K_0'| \leq \varepsilon 2^{-\beta-1} K_0'$ for all
$0 \leq k \leq n_0 - n_2$. Using the analysis similar to the case b)
we see that this event implies $|K_{n_2+k}-K_0| \leq \varepsilon K_0$,
for $0 \leq k \leq n_0 - n_2$. This finishes the proof.
\end{proof}


The next lemma controls the size of processes for large times.

\begin{lemma}\label{lemma:late_times}
Suppose the conditions of Lemma \ref{lemma:diff_rates_process_analysis} hold. There is a $\delta > 0$ such that when $L_M' L_M^{-\delta} \to 0$ the following holds:
There is a sequence $(\eta_M)$ converging to zero 
such that with the probability at least $1-\eta_M$ we have
\begin{itemize}
\item[(i)] $X_{n+1} \leq X_n \leq 2L_M'^{\frac{1+\beta}{2}}$ for all $n \geq n_0$, in the case $K_0 \leq M^{(1-\beta)/2}$,
\item[(ii)] $Y_{n+1} \leq Y_n \leq 2L_M'^{\frac{1+\beta}{2\beta}}$ for all $n \geq n_0$, in the case $K_0 \geq M^{(1-\beta)/2}$.
\end{itemize}
\end{lemma}

\begin{proof}
When $K_0 \leq
M^{(1-\beta)/2}$, the inequality $Y_{n_0} \leq M-2n_{0}$ implies
\[
K_{n_0} \geq X_{n_0} M^{(1-\beta)/2}(M-2n_0)^{-(1+\beta)/2},
\]
which, by the definition of $n_0$ and the fact that $K_{n_0} \leq
(1+\varepsilon)K_0$, yields $X_{n_0} \leq 2L_M'^{(1+\beta)/2}$, for an appropriate value of $\varepsilon$.
When $K_0 \geq M^{(1-\beta)/2}$ then the inequality $X_{n_0} \leq
M-2n_{0}$ implies
\[
K_{n_0} \leq M^{(1-\beta)/2}(M-2n_0)^{(1+\beta)/2}Y_{n_0}^{-\beta},
\] 
which, by the definition of $n_0$ and the fact that $K_{n_0} \geq
(1-\varepsilon)K_0$, yields $Y_{n_0} \leq
2L_M'^{\frac{1+\beta}{2\beta}}$, for an appropriate value of $\varepsilon$.  If $K_0 \leq
M^{(1-\beta)/2}$ denote by $U$ the process $X$ and $\tau =
\frac{1+\beta}{2}$ and if $K_0 \geq M^{(1-\beta)/2}$ denote by $U$ the
process $Y$ and $\tau = \frac{1+\beta}{2\beta}$  (if $K_0 =
M^{(1-\beta)/2}$ do either). Furthermore denote by $n_1$ and $n_2$ the
largest integers such that
\[
M-2n_1 \geq M\Big(\frac{L_M'}{Z_0}\Big)^{2/d}, \text{ and } M-2n_2
\geq M\Big(\frac{1}{L_M'Z_0}\Big)^{2/d}.
\]



First denote the event $\mathbf{U}_3 = \{U_{k+1}
\leq U_k:  \text{for all }k \geq n_2 \}$. As the value of $U$ can grow only if the value of $Z$ decreases, the event $\mathbf{U}_3$ contains the event that $\{Z_k = 0: \text{for all } k \geq n_2\}$. By Theorem \ref{thm: urn applied to model} the probability of this event converges to 1. This handles the time regime $k \geq n_2$. In particular, the claim is proved if $n_0 \geq n_2$, so we assume $n_0 < n_2$.

To finish the proof denote the events
\[
\mathbf{U}_1 = \{U_{k+1} \leq U_k: \text{for all } n_0 \leq k \leq n_1\}, \mathbf{U}_2
= \{U_{k+1} \leq U_k: \text{for all } n_1 \leq k \leq n_2\}, 
\]
in the case $n_0 \leq n_1$, and just
\[
\mathbf{U}_2
= \{U_{k+1} \leq U_k: n_0 \leq k \leq n_2\}, 
\]
if $n_0 > n_1$.

By Theorem \ref{thm: urn applied to model}, with high probability the process $Z_k$ is bounded from above by a constant multiple of
$Z_0(1-2k/M)^{d/2}$, for all $k \leq n_1$, and $X_k+Y_k$ from below by a constant multiple of $M-2k - Z_0(1-2k/M)^{d/2}$. 
Bounding the probability that this fails by $\delta_M$, by \eqref{eq:concentration_prob_very_likely}, the probability of the event $\mathbf{U}_1$ can be estimated as
\begin{multline*}
\mathbb{P}(\mathbf{U}_1) \geq \mathbb{P}(U_{n_0} \leq 2L_M'^\tau) \prod_{k = n_0}^{n_1}\left(1 -
\frac{c_2L_M'^{\tau}Z_0(1-2k/M)^{d/2}}{(M-2k -
  Z_0(1-2k/M)^{d/2})(M-2k)}\right) - \delta_M \\ \geq
1-\mathbb{P}(U_{n_0} > 2L_M'^\tau) -\frac{c_2}{1-2^{-d/2+1}}\sum_{k =
  n_0}^{n_1}\frac{L_M'^{\tau}Z_0(1-2k/M)^{d/2}}{(M-2k)^2} - \delta_M,
\end{multline*}
for some constant $c_2$, where we used that fact that $Z_0 \leq M$ and $M-2k \leq M/2$, for $k
\geq n_0$ and $\delta$ from the statement small enough (see Remark \ref{rem:lower_bound_for_the_smaller_start}).
It suffices to prove that the above sum converges to $0$. To estimate
it calculate
\begin{align}\label{eq:diff_rates_tail_1}
\frac{L_M'^{\tau}Z_0}{M^2}\sum_{k =
  n_0}^{n_1}\Big(1-\frac{2k}{M}\Big)^{d/2-2} & \leq
\frac{L_M'^{\tau}Z_0}{M} \int_{1-2(n_1+1)/M}^{1-2(n_0-1)/M}t^{d/2-2}dt \nonumber \\
& \leq \frac{2}{d-2}L_M'^{\tau}\Big(\frac{M-2n_0+2}{M}\Big)^{d/2-1},
\end{align}
where the first inequality follows by monotonicity of the function $t
\mapsto t^{d/2-2}$.  
By Remark \ref{rem:lower_bound_for_the_smaller_start} the right hand side converges to zero, for $\delta$ small enough.

Next we bound the probability of $\mathbf{U}_2$. Observe that
\begin{equation}\label{eq:diff_rates_n''_estimate}
M-2n_2 \geq \frac{M}{Z_0^{2/d}L_M'^{2/d}} \geq
\frac{M^{1-2/d}}{L_M'^{2/d}},
\end{equation}
and that with
probability converging to $1$ we have $Z_n \leq 2L_M'$, for $n
\geq n_1$. Using \eqref{eq:concentration_prob_very_likely} as in the bounds for $\mathbb{P}(\mathbf{U}_1)$ it suffices to prove that the following sum converges to zero
\[
 \sum_{k = n_1}^{n_2} \frac{c_3L_M'^{\tau+1}}{(M-2k)^2} =
c_3L_M^{\tau+1}\sum_{k = n_1}^{n_2}\frac{1}{(M-2k)^2} \leq
\frac{2c_3L_M^{\tau+1}}{M-2n_2} \leq
\frac{2c_3L_M^{\tau+1+2/d}}{M^{1-2/d}}.
\]
The right hand side clearly converges to zero, which finishes the
analysis of the event $\mathbf{U}_2$.

\end{proof}


We are now ready to prove  Theorem
\ref{thm:(1+o(1))_concentration_diff_rates_0}.
\begin{proof}[Proof of Theorem \ref{thm:(1+o(1))_concentration_diff_rates_0}]
To have notation more compatible with the previous results in this section (which will be heavily used) we change the notation from $L_N$ in the statement to $L_M$. Also recall we scale the pair $(\beta,\rho)$ so that $\rho = 1$.
Throughout the proof we will assume that $n_0$ is defined through \eqref{eq:n0 definition}, for a sequence $(L_M')$ satisfying $L_M'L_M^{-\delta} \to 0$, for $\delta > 0$ small enough. In particular, we will need the assumptions in Lemma
\ref{lemma:late_times} to be satisfied.


As in Lemma \ref{lemma:late_times} define $n_1$ as the largest integer such that $M-2n_1 \geq
M(L_M'/Z_0)^{2/d}$, and apply
Theorem \ref{thm: urn applied to model} to conclude that for any $\varepsilon > 0$ with probability converging to 1 (as $M \to \infty$) we have
\[
Z_n = (1 \pm \varepsilon) Z_0\Big(1-\frac{2n}{M}\Big)^{d/2}, \text{ for } 0 \leq n \leq
n_1.
\]
Also define $m = n_0 \wedge n_1$. We will assume throughout that $1-2m/M \to 0$, which can accomplished by selecting $\delta > 0$ above small enough.

First we will prove all the results with $R_{\text{fin}}$ replaced by $R_{m}$. 
By Lemma \ref{lemma:diff_rates_process_analysis} and Theorem \ref{thm: urn applied to model}  for any $\varepsilon > 0$ with probability converging to 1 we have
\[
\frac{1-Y_n/(X_n+Y_n)}{\big(Y_n/(X_n+Y_n)\big)^\beta} = (1 \pm
\varepsilon)\frac{K_0}{M^{1-\beta}(1-2n/M)^{(1-\beta)/2}\Big(1-
  \frac{Z_0}{M}(1-2n/M)^{d/2-1}\Big)^{1-\beta}}
\]
for $0 \leq n \leq n_0$. For $\beta=1$ this yields that for any $\varepsilon >0$ with probability converging to 1 we have
\[
\frac{Y_n}{X_n+Y_n} = (1 \pm \varepsilon)\frac{1}{1+K_0} = (1 \pm \varepsilon)
\frac{Y_0}{X_0 + Y_0}, \text{ for } 0 \leq n \leq n_0.
\]
For $\beta \neq 1$, define the function $\varphi_\beta \colon (0,1) \to \mathbb{R}$ by
$\varphi_\beta(t) = (1-t)^{1/(\beta -1)}t^{-\beta/(\beta-1)}$. Since the derivative
of $t \mapsto (1-t)t^{-\beta}$ is bounded away from zero on $(0,1)$, the derivative of the inverse of this function is bounded from above, and so, for any $\varepsilon > 0$ with the probability converging to 1 we have
\begin{equation}\label{eq:formula for ratio}
\frac{Y_n}{X_n+Y_n} = (1\pm
\varepsilon)\varphi_\beta^{-1}\left(MK_0^{1/(\beta-1)}\Big((1-2n/M)^{1/2} -
 \frac{Z_0}{M}(1-2n/M)^{(d-1)/2}\Big)\right), \text{ for } 0 \leq n \leq n_0.
\end{equation}
Define the function $\xi_\beta(s) = \frac{\beta s}{\beta s+(1-s)}$ which has a derivative bounded away from zero on $(0,1)$. Since 
\[
\frac{Y_n}{\beta X_n+Y_n} =  \xi_\beta^{-1}\Big(\frac{Y_n}{X_n+Y_n}\Big),
\]
and $\phi_\beta = \varphi_\beta \circ \xi_\beta$
we have with probability converging to 1 (as $M \to \infty$)
\[
\frac{Y_n}{\beta X_n+Y_n} = (1\pm
\varepsilon)\phi_\beta^{-1}\left(MK_0^{1/(\beta-1)}\Big((1-2n/M)^{1/2} -
 \frac{Z_0}{M}(1-2n/M)^{(d-1)/2}\Big)\right).
\]

With probability converging to 1 (as $M \to \infty$), for all $n$ the conditional probability that in the $(n+1)$-st
step we add a new red vertex is equal to
\[
\frac{Y_n}{\beta X_n + Y_n}\frac{Z_n}{M-2n-1}.
\]
Then for $\beta=1$, with probability converging to 1 (as $M \to \infty$) 
\[
R_m - R_0 = (1\pm \varepsilon) \sum_{n=0}^{m} \frac{Y_0}{X_0+Y_0}
\frac{Z_0}{M}(1-2n/M)^{d/2-1},
\]
where we used \eqref{eq:concentration_prob_very_unlikely} and the fact that the right hand side above converges to $\infty$ (which follows easily from $(X_0 + Y_0)/M \to 0$ and $1-2m/M \to 0$).
By replacing the Riemann sum with the integral
\[
R_m - R_0 = (1\pm \varepsilon)
\frac{Y_0}{X_0+Y_0}\frac{Z_0}{2} \int_{1-2m/M}^1 t^{d/2-1}dt  =
(1\pm\varepsilon)\frac{Y_0}{X_0+Y_0}\frac{Z_0}{d} \Big(1-
(1-2m/M)^{d/2}\Big).
\]
Since $1-2m/M \to 0$ and $Z_0/M \to 1$,
this proves \eqref{eq:1+o(1)_concentration_beta_equal_1} (with $R_m$ instead of $R_{\text{fin}}$).

For $\beta \neq 1$ proceed analogously.  
With probability converging to 1 (as $M \to \infty$)
\begin{align*}
R_m - R_0 & = (1\pm \varepsilon) \sum_{n=0}^{m}
\phi_\beta^{-1}\left(MK_0^{1/(\beta-1)}\Big((1-\tfrac{2n}M)^{1/2} -
\tfrac{Z_0}{M}(1-\tfrac{2n}M)^{(d-1)/2}\Big)\right)
\tfrac{Z_0}{M}(1-\tfrac{2n}M)^{d/2-1} \\ &
= (1\pm \varepsilon) \frac{Z_0}{d}\sum_{n=0}^{m}
\phi_\beta^{-1}\left(MK_0^{1/(\beta-1)}\Big((1-\tfrac{2n}M)^{1/2} -
\tfrac{Z_0}{M}(1-\tfrac{2n}M)^{(d-1)/2}\Big)\right)
\tfrac{d}{M}(1-\tfrac{2n}M)^{d/2-1},
\end{align*}
where, like in the case $\beta=1$, we used \eqref{eq:concentration_prob_very_unlikely} and now the estimates in Lemma \ref{lemma:inverse_estimates} as well (to justify that the right hand side converges to $\infty$).
The above sum on can be replaced as a Riemann sum of the function 
\[h_1(t) = \phi_\beta^{-1}\left(MK_0^{1/(\beta-1)}\Big(t^{1/d} -
  \frac{Z_0}{M}t^{(d-1)/d}\Big)\right),\]
over the interval $[(1-2m/M)^{d/2},1]$ with the subdivision at $(1-2n/M)^{d/2}$, $n=0,1, \dots , m$. To justify this we only need to replace $(1-2(n-1)/M)^{d/2} - (1-2n/M)^{d/2}$ by $dM^{-1}(1-2n/M)^{d/2-1}$, which is possible since the quotient of these two terms converges to 1 as $M \to \infty$ uniformly for all $n = 0,1, \dots , m$ (this follows by Taylor expansion and $M-2m \to \infty$). To justify replacing this Riemann sum with the corresponding integral 
\[
 \int_{(1-2m/M)^{d/2}}^1 h_1(t)~dt =  \int_{(1-2m/M)^{d/2}}^1
  \phi_\beta^{-1}\left(MK_0^{1/(\beta-1)}\Big(t^{1/d} -
  \frac{Z_0}{M}t^{(d-1)/d}\Big)\right)dt,
\]
we only need to show that the number of local extrema of $h_1(t)$ in $[0,1]$ is bounded uniformly in $M$ and that 
\[\frac{\max_{0 \leq t \leq 1} h_1(t)}{M\int_{(1-2m/M)^{d/2}}^1 h_1(t)~dt} \to 0.\] 
This is because one can bound the integral of $h_1(t)$ over each interval on which $h_1(t)$ is monotone from above and below by adding or removing a term in the Riemann sum. 
The first condition follows from the fact that $t^{1/d} -\frac{Z_0}{M}t^{(d-1)/d}$ is concave on $[0,1]$ and $\phi_\beta^{-1}$ is monotone. To check the second condition observe that since $Z_0/M \to 1$ the function $t^{1/d} -\frac{Z_0}{M}t^{(d-1)/d}$ is uniformly bounded away from zero on the interval $[1/4,3/4]$. Since $1-2m/M \to 0$, Lemma \ref{lemma:inverse_estimates} implies that the integral $\int_{(1-2m/M)^{d/2}}^1 h_1(t)~dt$ is bounded form below by a constant (uniform in $M$) times $(M/K_0)^{1/\beta}M^{-1} \wedge 1$. Now the condition follows simply from $\max_{0 \leq t \leq 1}h_1(t) \leq 1$.

So far we have shown that with probability converging to 1 
\[
R_m - R_0  = (1\pm \varepsilon)\frac{Z_0}{d} \int_{(1-2m/M)^{d/2}}^1
  \phi_\beta^{-1}\left(MK_0^{1/(\beta-1)}\Big(t^{1/d} -
  \frac{Z_0}{M}t^{(d-1)/d}\Big)\right)dt.
\]
Now we change the lower bound in the integral to zero, to match the one in 
\eqref{eq:1+o(1)_concentration_beta_no_1_one_parameter}. By Lemma \ref{lemma:inverse_estimates} it suffices to
prove that
\begin{equation}\label{eq:1+o(1)_concentration_fixing_lower_bound}
\int_0^{(1-2m/M)^{d/2}} \frac{M^{1/\beta
    -1}}{K_0^{1/\beta}}\Big(t^{1/d} -
\frac{Z_0}{M}t^{(d-1)/d}\Big)^{1/\beta -1} \wedge 1 \ dt \leq \delta_M
\Big(\frac{M^{1/\beta -1}}{K_0^{1/\beta}} \wedge 1\Big),
\end{equation}
for a sequence $(\delta_M)$, converging to $0$ and depending only on
$\beta$, $d$ and $L_M$. When $\frac{M^{1/\beta -1}}{K_0^{1/\beta}}
\geq 1$ then \eqref{eq:1+o(1)_concentration_fixing_lower_bound} holds
as long as we take $\delta_M \geq (1-2m/M)^{d/2}$. When
$\frac{M^{1/\beta -1}}{K_0^{1/\beta}} < 1$ then
\eqref{eq:1+o(1)_concentration_fixing_lower_bound} holds for
\[
\delta_M \geq \int_0^{(1-2m/M)^{d/2}} h_2(s)^{1/\beta -1} ds,
\] 
where $h_2(s) = s^{1/d}$, for $\beta < 1$ and $h_2(s) = s^{1/d} -
s^{(d-1)/d}$, for $\beta > 1$ (in either case $h_2(s)^{1/\beta -1}$ is
integrable in the neighborhood of $0$).

So far we have proved that when $\beta \neq 1$
\[
R_m - R_0 = (1\pm \varepsilon)\frac{Z_0}{d}\int_0^1 \phi_\beta^{-1}\Big(MK_0^{1/(\beta-1)}(t^{1/d}-\frac{Z_0}{M}t^{1-1/d})\Big)~dt,
\]
with probability converging to 1, as  $M \to \infty$.
We can immediately replace the factor $Z_0/d$ with $M/d$. Removing the factor $Z_0/M$ inside $\phi_\beta^{-1}$ appearing in the integral requires checking that $s_M/t_M \to 1$ implies $\phi_\beta^{-1}(s_M)/\phi_\beta^{-1}(t_M) \to 1$, which is the content of Lemma \ref{lemma:unif_continuity_of_log}. 

It is now easy to check that for all values of $\beta$ (using Lemma \ref{lemma:inverse_estimates} for $\beta \neq 1$) we have that for some constant $c$, with probability converging to 1
\begin{equation}\label{eq:estimates on Rm}
R_m- R_0 \geq c (M \wedge  M^{1/\beta}K_0^{-1/\beta}).
\end{equation}

To conclude the proof we need to replace $R_m$ by $R_{\text{fin}}$. 
For $m=n_0$ and $K_0 \geq M^{(1-\beta)/2}$  it follows from Lemma \ref{lemma:late_times} ii) that actually $R_{\text{fin}} = R_m$ with probability converging to 1, so there is nothing to prove in this case.
To handle other cases recall that the value of $R$ can increase (always by $1$) only if a new uncolored half-edge is matched, that is the value of $Z$ decreases (always by $d$). Thus
  $R_{\text{fin}}-R_m \leq Z_{m}$. 
Thus it suffices to show that with probability converging to $1$ we have $Z_m/(R_{m}-R_0)$ converges to zero in probability.  
For $m=n_0$ and $K_0 \leq M^{(1-\beta)/2}$, by $Z_{n_0} \leq M-2n_0$ and by \eqref{eq:estimates on Rm} it suffices to prove that 
$(M-2n_0)/M$ and $(M-2n_0)(K_0/M)^{1/\beta}$ both converge to zero. Recalling the definition of $n_0$ from \eqref{eq:n0 definition} and replacing $M-2n_0$ by $L_M'M^{\frac{1-\beta}{1+\beta}}K_0^{-\frac{2}{1+\beta}}$, the above reduces to showing that 
\[
L_M'(M^\beta K_0)^{-\frac{2}{1+\beta}} \ \text{ and } \ L_M' (K_0^{\beta-1}M^{1+\beta^2})^{-\frac{1}{\beta(1+\beta)}}
\]
both converge to zero. 
For $m=n_1$ by Theorem \ref{thm: urn applied to model} we have $Z_{n_1} \leq 2L_M'$, so by \eqref{eq:estimates on Rm} it suffices to show that
\[
L_M'/M \ \text{ and } \ L_M'(K_0/M)^{1/\beta}
\]
both converge to zero.
All of this follows from the bounds $L_M M^{-\beta} \leq K_0 \leq ML_M^{-\beta}$ for an appropriate exponent $\kappa > 0$.
\end{proof}

Now we prove Theorem \ref{thm: 1+o(1) concentration for boundary}.


\begin{proof}
The proof will follow the proof of Theorem \ref{thm:(1+o(1))_concentration_diff_rates_0}. However in this proof we do not use the $(1\pm o(1))$ concentration result for $Z_n$ (just for $X_n+Y_n$), so instead of $m = n_0 \wedge n_1$,  we can work with $n_0$, where $n_0$ is as in \eqref{eq:n0 definition}. Again we first prove the statement with $D_{\text{fin}}$ replaced by $D_{n_0}$. At the $n$-th step the conditional probability of connecting a blue to a red vertex is equal to 
\[
\frac{(1+\beta)X_nY_n}{(\beta X_n+Y_n)(M-2n-1)} = \frac{X_n+Y_n}{M-2n-1} \overline{\kappa}_\beta\Big(\frac{Y_n}{X_n+Y_n}\Big),
\]
where $\overline{\kappa}_\beta(t) = \frac{(1+\beta)t(1-t)}{t + \beta(1-t) }$.
With probability converging to 1 as $M \to \infty$ we have
\[
\frac{Y_n}{X_n+Y_n} = (1\pm
\varepsilon) \frac{Y_0}{X_0+Y_0}
, \ \text{ for } \beta = 1
\]
and by \eqref{eq:formula for ratio}
\[
\frac{Y_n}{X_n+Y_n} = (1\pm
\varepsilon)\varphi_\beta^{-1}\left(MK_0^{1/(\beta-1)}\Big((1-2n/M)^{1/2} -
 \frac{Z_0}{M}(1-2n/M)^{(d-1)/2}\Big)\right), \ \text{ for } \beta \neq 1
\]
for all $0 \leq n \leq n_0$, and 
\[
X_n+Y_n = (1\pm \varepsilon) \big(M-2n -Z_0(1-2n/M)^{d/2}\big)
\]
as well.
Observe that the derivative of $\overline{\kappa}_\beta$ is bounded from above on $[0,1]$.
By Remark \ref{rem:nesting} for $\beta =1$ we have 
\[
D_{n_0} - D_0 = (1\pm \varepsilon) \frac{2X_0Y_0}{(X_0+Y_0)^2}\sum_{n=0}^{n_0} \big(1 -\frac{Z_0}{M}(1-2n/M)^{d/2-1}\big).
\]
We can then replace the sum by 
\[
n_0 - \frac{Z_0}{2} \int_{1-2n_0/M}^1 t^{d/2-1}dt = n_0 - \frac{Z_0}{d} + \frac{Z_0}{d}\Big(1- \frac{2n_0}{M}\Big)^{d/2}.
\]
Since $2n_0/M \to 1$ and $Z_0 / M \to 1$ this implies \eqref{eq:(1+o(1))_concentration_boundary_eq_rates}.


For the case $\beta \neq 1$, one can replace the sum approximating $D_{n_0}$ by the corresponding integral
\begin{equation}\label{eq:boundary_preliminary}
D_{n_0} - D_0 = (1\pm \varepsilon)\int_{1-2n_0/M}^1\big(\frac{M}{2} - \frac{Z_0}{2}t^{d/2-1}\big) \overline{\kappa}_\beta \circ \varphi_\beta^{-1}\Big(MK_0^{1/(\beta-1)}\big(t^{1/2}- \frac{Z_0}{M}t^{(d-1)/2}\big)\Big)dt.
\end{equation}
This is justified by splitting the sum into two parts, one containing the factor $M/2$ and the other containing the factor $\frac{Z_0}{2}t^{d/2-1}$, and for each of these two sums applying and argument, analogous to the one in the proof of Theorem \ref{thm:(1+o(1))_concentration_diff_rates_0}.
As established in the proof of Theorem \ref{thm:(1+o(1))_concentration_diff_rates_0} $\phi_\beta = \varphi_\beta \circ \xi_\beta$ and since $\kappa_\beta  = \overline{\kappa}_\beta \circ \xi_\beta$, we get $\overline{\kappa}_\beta \circ \varphi_\beta^{-1} = \kappa_\beta \circ \phi_\beta^{-1}$, so we can make this replacement in \eqref{eq:boundary_preliminary}. Furthermore, replacing $Z_0/2$ with $M/2$ is trivial, while to replace $Z_0/M$ with 1, again use Lemma \ref{lemma:unif_continuity_of_log}.
To fix the lower bound $1-2n_0/M$ follow steps in the proof of  \eqref{eq:1+o(1)_concentration_fixing_lower_bound}.

We are only left to prove that $D_{n_0}$ can be replaced by $D_{\text{fin}}$. For this observe that similarly as in \eqref{eq:estimates on Rm} we have $D_{n_0}- D_0 \geq c (M \wedge  M^{1/\beta}K_0^{-1/\beta})$. On the other hand $D$ can increase only when the value of $X$ and $Y$ both decreases, so it suffices to show that either both $X_{n_0}/M$ and $X_{n_0}(K_0/M)^{1/\beta}$, or both $Y_{n_0}/M$ and $Y_{n_0}(K_0/M)^{1/\beta}$  converge to zero in probability. This follows from Lemma \ref{lemma:late_times} and again the bound $K_0 \leq ML_M^{-\beta}$.
\end{proof}


\section{Dynamics on the torus}
As mentioned in the introduction, the behavior of the competing
infection process is extremely different when the underlying graph is
a $d$ dimensional torus, and not a random $d$-regular graph. The
difference is stated in Theorem \ref{TorusThm}, which is proved in this
section.  The proof relies on the following shape theorem from
\cite{CoxDurrett81}, due to Cox and Durrett. 

In the following theorem we consider the continuous time, rate 1, first passage percolation process on $\mathbb{Z}^2$ started from the origin. Let $\mathcal{S}_t$ be the set that the process occupies at time $t$, thickened by $1/2$, that is $\mathcal{S}_t$ is the union of closed hipercubes of side length $1$ centered at the points explored by the first passage percolation process at time $t$.
The result was originally proven for more general distributions for edge weights, and holds in higher dimensions as well.
\begin{theorem}[Cox, Durrett]
\label{CoxDurrett}
There exists a non-trivial, convex set $A \subset \mathbb{R}^2$ which is symmetric around the origin, and such that for
any $\delta > 0$
\[
\lim_{t \to \infty}\pr \big( (1 - \delta)tA \subset \mathcal{S}_t \subset (1 + \delta) tA \big)
\rightarrow 1.
\]
\end{theorem}
This theorem was generalized for every $d \geq 3$, see for example
\cite{Kesten86}. The theorem can be understood in the following way. For $x \in \mathbb{R}^d$, define $d(x) = \min \{  t |
x \in tA \}$. Since $A$ is convex and symmetric, it is an easy exercise to show that $x \mapsto d(x)$ is a norm on $\mathbb{R}^d$, and thus $d(x,y) = d(y-x)$ is a metric on $\mathbb{R}^d$. Then Theorem \ref{CoxDurrett} says that with probability converging to $1$ as $t\to \infty$, the ball in the random first-passage percolation metric of radius $t$ contains the $d$-metric ball of radius $(1-\delta)t$ and is contained in the  $d$-metric ball of radius $(1+\delta)t$. Furthermore, observe that changing the rate of the first-passage percolation process to $\beta$ simply corresponds to scaling of the set $A$ by a factor of $\beta$.

Assume that in Theorem \ref{TorusThm} we start both processes simultaneously from two uniformly chosen vertices $\mathcal{B}_0 = \{x\}$ and $\mathcal{R}_0=\{y\}$ (that is $B_0 = R_0 =1$). 
Then by Theorem \ref{CoxDurrett} it is easy to see that for any $t_0>0$ and $\varepsilon > 0$, with probability converging to $1$ as $n\to \infty$, every vertex $v \in \torus{N}d$ such that $d(x,v) < (t-\varepsilon)\beta n$ and $d(y,v) > (t+\varepsilon)\rho n$, for some $t> t_0$ satisfies $v \in \mathcal{B}_{\text{fin}}$, and every vertex $v \in \torus{N}d$ such that $d(x,v)> (t+\varepsilon)\beta n$ and $d(y,v) < (t-\varepsilon)\rho n$, for some $t>t_0$ satisfies $v \in \mathcal{R}_{\text{fin}}$. Also observe that for any  $\delta > 0$ we can find $r>0$ such that the probability that $d(x,y) < rn$ is less than $\delta$. 
It follows that by scaling the torus by the factor $1/n$ (to a unit torus), and sending $n \to \infty$, Theorem \ref{CoxDurrett} and the discussion in the previous paragraph imply that the pair of sets $(\mathcal{B}_{\text{fin}}/n, \mathcal{R}_{\text{fin}}/n)$ converge in Hausdorff metric to the Voronoi partition of the continuous unit torus in the metric $d$. More precisely, the limiting set for $\mathcal{B}_{\text{fin}}/n$ is a set of points $v$ on the unit torus $\mathbb{R}^d/[0,1]^d$ for which $d(x,v)/\beta < d(y,v)/\rho$, where $x$ and $y$ are two points chosen uniformly and independently on the torus. This in particular yields Theorem \ref{TorusThm} in this special case, and the proof of the general case presented below is essentially the same.

\begin{proof}[Proof of Theorem \ref{TorusThm}]
Fix $\varepsilon > 0$. Assume first that  $(\mathcal{B}_0,\mathcal{R}_0)$ are chosen uniformly at random of size $(B_0,R_0)$. Then there exists $\delta'>0$ such that, with probability at  least $1-\varepsilon /2$, the Euclidean distance between any pair of points in $\mathcal{B}_0 \cup \mathcal{R}_0$  is at least $\delta' n$. For $\delta''>0$ and every $x \in \mathcal{B}_0$ define the ball $B_x = \{v \in \torus{N}d: d(x,v) < n\beta\delta''\}$, and for every  $y \in \mathcal{R}_0$ define
$R_y = \{v \in \torus{N}d: d(y,v) < n\delta''\rho\}$. It is not hard to see that one can choose $\delta''$ small enough so that all the sets $B_x$ for $x \in \mathcal{B}_0$ and $R_y$, for $y \in \mathcal{R}_0$ are disjoint with probability at least $1-\varepsilon /2$. Conditioned on this event, Theorem \ref{CoxDurrett} yields that for $n$ large enough, with probability at least $1-\varepsilon/2$ for $t = 3\delta'' n/4$, the set $\mathcal{B}_t$ contains all the balls $\frac{1}{2}B_x = \{v \in \torus{N}d: d(x,v) < n\beta\delta''/2\}$, $x \in \mathcal{B}_0$ and is contained in $\cup_{x \in \mathcal{B}_0}B_x$ (and the analogous claim holds for $\mathcal{R}_t$). As all sets $B_x$ and $R_y$ have size linear in $N$, the claim follows.

If on the other hand we select $(\mathcal{B}_0,\mathcal{R}_0)$ uniformly of size $(B_0,R_0)$ with $\mathcal{B}_0$ center of size $k_0$, then simply apply the above argument with sets $B_x = \{v \in \torus{N}d: d(x,v) < n\beta\delta''\}$ defined for 
$x \in \mathcal{B}_0^0$. 



\end{proof}

\end{document}